\documentclass[11pt]{article} 
\usepackage[a4paper,margin=1in]{geometry} 
\usepackage{amsmath,amssymb,amsthm} 
\usepackage{amsfonts} 
\usepackage{float}
\usepackage{graphicx} 
\usepackage{booktabs,longtable}
\usepackage{hyperref} 
\usepackage{mathtools} 
\usepackage{paracol} 
\usepackage{pdflscape} 
\usepackage{caption} 
\usepackage{subcaption} % <--- ADDED THIS PACKAGE
\usepackage{xcolor} 
\usepackage{mathabx} 
\usepackage{tikz-cd}
\usepackage{placeins}
\usepackage{authblk}

\theoremstyle{plain}
\newtheorem{theorem}{Theorem}[section]

\newtheorem{lemma}[theorem]{Lemma}
\newtheorem{proposition}[theorem]{Proposition}
\newtheorem{corollary}[theorem]{Corollary}

\theoremstyle{definition}
\newtheorem{definition}[theorem]{Definition}

\theoremstyle{remark}
\newtheorem{remark}[theorem]{Remark}

\theoremstyle{remark}
\newtheorem{assumption}[theorem]{Assumption}
\theoremstyle{remark}

\DeclareMathOperator{\vol}{vol}

\DeclareMathOperator{\Spec}{Spec}
\newcommand{\sslash}{\mathbin{/\mkern-6mu/}}
\let\pr\relax
\DeclareMathOperator{\pr}{pr}
\let\shape\relax
\DeclareMathOperator{\shape}{shape}

\newcommand{\N}{\mathbb{N}}
\newcommand{\Prob}{\mathbb{P}}

\newcommand{\C}{\mathbb{C}}
\newcommand{\R}{\mathbb{R}}

\title{Metric Rarity and the Emergence of Symmetry in G-Invariant Potential Surfaces}
\author{Irmi Schneider}
\affil{School of Computer Science and Engineering, The Hebrew University of Jerusalem}
\affil{\textit{irmi.schneider@mail.huji.ac.il}} 
\date{January 27, 2026}
\begin{document} 
\maketitle 

\begin{abstract}
Let $X$ be an irreducible complex affine algebraic variety defined over $\mathbb{R}$, equipped with a faithful action of a finite group $G$, and let $Y = X \sslash G$ denote the categorical quotient with projection $\pi$. We study the geometry of the real image $L = \pi(X(\mathbb{R})) \subset Y(\mathbb{R})$ and its consequences for $G$-invariant optimization.

Equipping $Y(\mathbb{R})$ with the measure induced by a $G$-invariant metric on $X$, we prove that the relative volume of $L$ in $Y(\mathbb{R})$ equals $(\#\mathrm{Inv}(G))^{-1}$, where $\mathrm{Inv}(G)$ is the set of involutions of $G$. For the symmetric group $S_n$ acting on $\mathbb{R}^n$, this ratio decays super-exponentially in $n$. In particular, $L$ is metrically rare within the ambient real quotient.

We apply this result to two phenomena observed in $G$-invariant optimization problems:

Regime~I (Rarity of asymmetric critical points). The super-exponential decay of the volume of $L$ renders the interior $L^\circ$ statistically negligible as a locus for critical points. This geometric rarity provides a rationale for the observed prevalence of symmetry: generic critical points are constrained to the boundary strata of $L$, corresponding to orbits with non-trivial stabilizers.

Regime~II (Energetic ordering by symmetry). We formulate the \emph{Active Constraint hypothesis}: due to the metric rarity of the real image $L$, the landscape is dominated by a global gradient that drives the deepest descent trajectories toward the boundary of $L$. This global gradient directs the global minimum into the high-codimension strata of the boundary---corresponding to large stabilizers---thereby establishing a structural link between low energy and non-trivial stabilizers. This mechanism rationalizes the funnel topography of Lennard-Jones clusters, where the system is funneled into a crystallized ground state.
\end{abstract}

\tableofcontents
\clearpage

\section{Introduction}
\subsection{Problem Statement: The Prevalence of Symmetry}

\section{Introduction}
\subsection{Problem Statement: The Prevalence of Symmetry}

Let $X$ be an irreducible complex affine algebraic variety defined over $\mathbb{R}$, 
equipped with a faithful action of a finite group $G$ by real automorphisms. We denote by $X(\mathbb{R})$ the set of real points of $X$; we refer to points $x \in X$ as \textbf{configurations}.
For any configuration $x \in X$, let $G_x = \{g \in G \mid g \cdot x = x\}$ denote its stabilizer subgroup. We classify a configuration $x$ as \textbf{symmetric} if its stabilizer $G_x$ is non-trivial, and \textbf{asymmetric} (or in general position) otherwise.

Let $f: X \to \mathbb{C}$ be a $G$-invariant regular function defined over $\mathbb{R}$ (i.e., $f \in \mathbb{R}[X]^G$).
Throughout this paper, we will denote the restriction of $f$ to the real locus $X(\mathbb{R})$ simply by $f$, distinguishing between the complex and real functions only when necessary for clarity.
Our primary focus is the behavior of $f$ on the real locus $X(\mathbb{R})$. 
Within this setting, a central question regarding the critical points is the emergence of symmetry: 
we investigate the extent to which these points possess non-trivial stabilizers.

The existence of symmetric critical points is, in itself, a standard consequence of the group action. 
By the Principle of Symmetric Criticality (see Theorem \ref{thm:PSC}), any critical point of $f$ restricted to a real fixed-point subspace (definition \ref{def:fixed_point_space}) 
is necessarily a critical point of $f$ on the entire real manifold; thus, the existence of symmetric critical points is almost guaranteed.

Consider the geometry of the real locus $X(\mathbb{R})$. Within this manifold, the set of symmetric configurations forms a lower-dimensional 
subvariety of measure zero. Conversely, the asymmetric configurations (those in general position) constitute an open, dense set, occupying the full 
measure of the configuration space. Consider a $G$-invariant energy landscape 
$f: X(\mathbb{R}) \to \mathbb{R}$ with a large number of critical points. 
Statistically, one would expect the majority of these points to lie in the generic, asymmetric stratum.
Moreover, this statistical dominance implies a specific prediction for the ground state: 
assuming the energy values are distributed randomly across the critical points, the probability that the global minimum coincides with a rare 
symmetric configuration should be negligible. 

Empirical observation, however, reveals two distinct deviations from this expectation.

First, in what we term \textbf{Regime I}, symmetry appears to be prevalent rather than rare. 
This phenomenon was first empirically observed in the context of shallow ReLU networks by Safran and Shamir~\cite{safran2018}, who noted that all the local minima they detected consistently exhibited a highly symmetric structure. Building on this observation, Arjevani et al.~\cite{arjevani2019, arjevani2021} rigorously documented that these optimization landscapes systematically preserve the permutation symmetries of hidden neurons---exhibiting maximal or large isotropy subgroups---and that large isotropy groups similarly persist in the landscape of symmetric tensor decomposition.

In companion numerical experiments~\cite{schneider2025}, we investigated this question across a broad class of $G$-invariant variational problems of the following common form:
a finite group $G$ acts faithfully and linearly on a real vector space $X(\mathbb{R})$, and $f:X(\mathbb{R})\to\mathbb{R}$ is a $G$-invariant polynomial.
This framework simultaneously covers, for example, particle interaction models (with $G=S_n$ acting by label permutations), graph-based objective functions with automorphism symmetries, and invariant functions on finite projective geometries (with $G$ a finite linear group such as $PGL_n(\mathbb{F}_q)$ acting on the relevant representation space).
Across these algebraically distinct settings— and especially at higher polynomial degrees —we observed that almost all numerically detected critical points had non-trivial stabilizers, contrary to the statistical expectation.

Second, a distinct preference for \emph{higher} symmetry emerges at the energetic extremes—a phenomenon we term \textbf{Regime II}. Even in the cases mentioned above where symmetry is the norm, the deepest critical points exhibit the largest stabilizers (Figure~\ref{fig:distinct_values}).
Additional numerical evidence and robustness checks appear in Appendix~\ref{app:boundary_experiments}.
% Bibliography entries:
% \bibitem{safran2018}
% I.~Safran and O.~Shamir,
% \textit{Spurious Local Minima are Common in Two-Layer ReLU Neural Networks}.
% \href{https://arxiv.org/abs/1712.08968}{arXiv:1712.08968} \;[\href{https://arxiv.org/pdf/1712.08968.pdf}{PDF}], 2018.
%
% \bibitem{arjevani2019}
% Y.~Arjevani and M.~Field,
% \textit{On the principle of least symmetry breaking in shallow ReLU models}.
% \href{https://arxiv.org/abs/1912.11939}{arXiv:1912.11939} \;[\href{https://arxiv.org/pdf/1912.11939.pdf}{PDF}], 2019.
% Add this to your bibliography section:
% \bibitem{safran2018}
% I.~Safran and O.~Shamir,
% \textit{Spurious Local Minima are Common in Two-Layer ReLU Neural Networks}.
% \href{https://arxiv.org/abs/1712.08968}{arXiv:1712.08968} \;[\href{https://arxiv.org/pdf/1712.08968.pdf}{PDF}], 2018.
\begin{figure}[t!]
\centering
\begin{subfigure}[t]{0.48\textwidth}
        \centering
        \includegraphics[width=\linewidth]{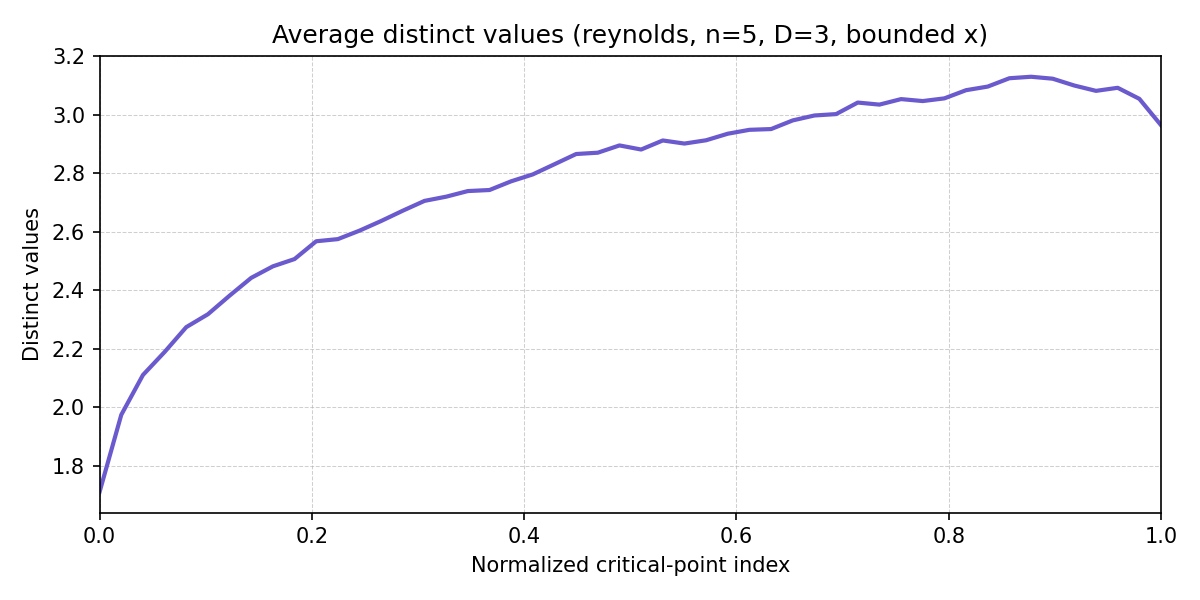}
        \caption{\textbf{Symmetry amplification at the global minimum.} \\
        \textbf{Model \& Data:} We analyze 1,000 random $S_5$-invariant polynomials generated via Reynolds symmetrization (Definition~\ref{def:reynolds_symmetrization}) of degree-3 polynomials sampled from the \emph{non-homogeneous} KSS ensemble (Definition~\ref{def:nhkss}) (with a confining term $+\|x\|^4$). For each polynomial, we performed 1,000 Newton searches to exhaustively detect critical points (filtered to be unique up to permutation). \\
        \textbf{Metric:} Critical points are sorted by energy from global minimum ($t=0$) to highest saddle ($t=1$). The curve displays the \textbf{ensemble mean} of the asymmetry score $D(x)$ (number of distinct coordinate values). \\
        \textbf{Observation:} A structural ordering emerges. While the bulk of the spectrum ($t \approx 0.5$) is dominated by generic asymmetric configurations (avg $D \approx 2.88$), the global minima ($t=0$) are statistically driven to high-symmetry strata (avg $D \approx 1.48$), confirming that low energy correlates with large stabilizer.}
        \label{fig:distinct_values}
    \end{subfigure}
    \hfill 
    \begin{subfigure}[t]{0.48\textwidth}
      \centering
      \includegraphics[width=\linewidth]{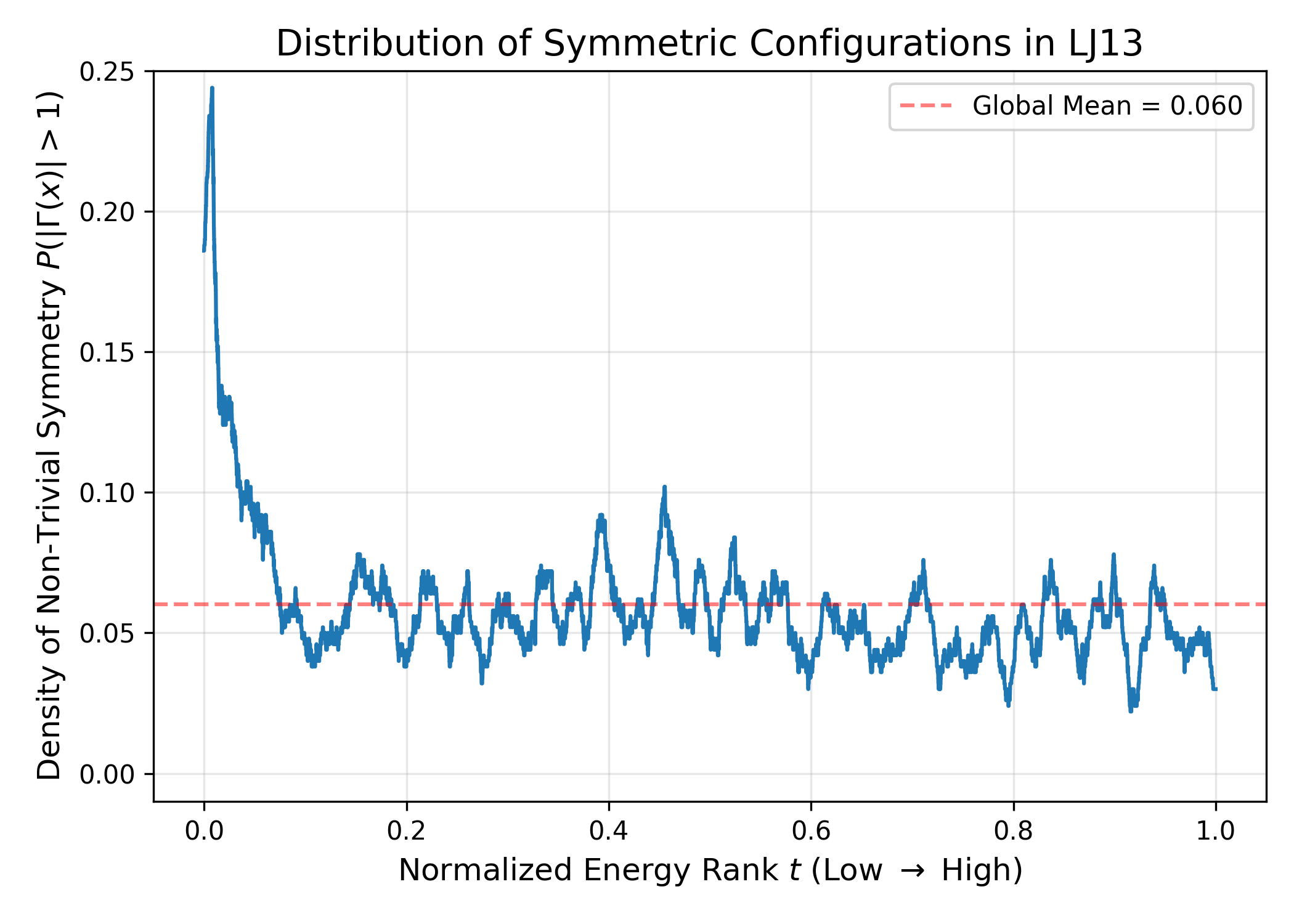}
      \caption{\textbf{The localization of symmetry in the Lennard-Jones (LJ13) landscape.} \\
      \textbf{Data:} Analysis of 30,517 critical points of the $LJ_{13}$ potential, sorted by energy from the global minimum ($t=0$) to the highest index saddle ($t=1$).\\
      \textbf{Metric:} The y-axis represents the local density of non-trivial symmetries, estimated via a moving average (window $w=500$) of the indicator function $\mathbb{I}(|\Gamma(x)| > 1)$. We measure the binary property of having \emph{any} symmetry versus none.\\
      \textbf{Observation:} A clear separation between two regimes is visible. The deep energy funnel ($t \to 0$) exhibits a high concentration of symmetric configurations (up to 20\% density). As energy increases, the probability of encountering symmetry decays rapidly towards the global baseline (dashed red line), confirming that symmetry is structurally linked to low-energy order rather than being uniformly distributed.}
      \label{fig:lj13_symmetry}
    \end{subfigure}

    \caption{Regime II symmetry emergence: (a) Random $S_n$ invariant polynomials. (b) Physical potential (LJ13).}
  \end{figure}

This correlation becomes the dominant selection mechanism in complex physical systems such as Lennard-Jones (LJ) clusters, 
governed by the potential $V_{LJ}$ (Definition~\ref{def:general_potential}). 
These systems map naturally to our algebraic framework: the configuration space \textit{modulo global motions} 
acts as the variety $X_{\shape}$ (Definition~\ref{def:shape_space}), and the physical \textit{point group} 
is identified with the stabilizer in $S_n$ (Proposition~\ref{prop:point_group_iso}).

In this setting, the generic stratum has full measure: for 13 particles (LJ13), approximately 94\% of the 30,517 known critical points and approximately 78\% of the local minima documented in the Cambridge Cluster Database\footnote{\url{https://www-wales.ch.cam.ac.uk/CCD.html}} possess a trivial stabilizer. 
Thus, the first phenomenon (prevalence) is absent.
However, the drive toward symmetry at the extremes re-emerges. 
As illustrated in Figure~\ref{fig:lj13_symmetry}, the local density of symmetric configurations increases monotonically as one moves deeper into the potential wells, culminating in the global minimum: the regular icosahedron (order 120).
This prevalence of symmetry is not unique to $N=13$; the pattern persists across the range $LJ_1$--$LJ_{150}$, where approximately 83\% of global minima possess non-trivial point-group symmetry \cite{wales1997global}.

The systematic correlation between high symmetry and energetic extremes---specifically the tendency of symmetric structures to occupy the tails of the energy distribution---was originally documented by Wales~\cite{wales1998symmetry}. 
Through extensive surveys of model systems, Wales established that while symmetry might not dictate the average energy, it is a defining characteristic of the lowest and highest energy states. 
We note that Wales analyzed \emph{near-symmetry} (defining symmetry via continuous measures and tolerance thresholds), whereas our framework focuses on \emph{exact} symmetry. 
Notably, the selection of symmetry at energetic extremes remains robust even under this strict algebraic definition.

\subsection{Our Proposal: Geometry of the Real Image}
\label{subsec:proposal_geometry}

We shift our analysis from the configuration space $X$ to the quotient space $Y = X \sslash G$. Let $\pi: X \to Y$ 
denote the quotient map (see Section \ref{sec:algebraic_framework}). The physical configurations form the \textbf{real image} $L = \pi(X(\mathbb{R}))$, 
a semi-algebraic subset of the real quotient $Y(\mathbb{R})$. There is a canonical isomorphism between $G$-invariant functions 
on $X$ and regular functions on $Y$. Any invariant energy $f$ factors as $f = \tilde{f} \circ \pi$ for a 
unique $\tilde{f}: Y(\mathbb{R}) \to \mathbb{R}$.
In particular, this implies that invariance imposes \textbf{no structural constraint} on $\tilde{f}$ itself; $\tilde{f}$ can be any generic function.
Consequently, the landscape features of $f$ are determined solely by the interaction between the function $\tilde{f}$ and the 
geometry of the domain $L$. 

The critical points are governed by the chain rule:
\[
df_x = d\tilde{f}_{\pi(x)} \circ d\pi_x.
\]
Let $x$ be a smooth point of $X$. Proposition \ref{thm:surjectivity_diff} states that the differential $d\pi_x$ is surjective if and only if the stabilizer $G_x$ is trivial.
This implies a fundamental distinction:
\begin{enumerate}
    \item \textbf{Trivial Stabilizer (Smooth Region):} If $G_x$ is trivial, then $x$ is a critical point of $f$ if and only if $y=\pi(x)\in L^\circ$ is a critical point of $\tilde{f}$ (i.e., $d \tilde{f}_y = 0$).
    \item \textbf{Non-Trivial Stabilizer (Singular Region):} If $G_x$ is non-trivial, $d\pi_x$ is singular. Here, $x$ can be a critical point of $f$ even if $d \tilde{f}_y \neq 0$ (defined on the Zariski tangent space).
\end{enumerate}

Utilizing the stratification analysis of Procesi and Schwarz \cite{procesi1985}, we identify the topological boundary $\partial L$ with the locus of configurations possessing even-order stabilizers (Theorem \ref{thm:boundary_parity}).
In reflection groups such as $S_n$, every non-trivial stabilizer has even order; 
consequently, $\pi$ maps all symmetric points to the boundary $\partial L$. 
A similar restriction appears empirically for Lennard--Jones critical points, where odd-order symmetries are exceptionally rare (a phenomenon we discuss later).
Therefore, throughout this paper we adopt a convention of brevity: we frequently describe the properties of a configuration $x$ 
directly in terms of its image $\pi(x)$, referring to symmetric configurations simply as lying on the boundary $\partial L$.

We investigate the origin of this statistical concentration of critical points on the boundary $\partial L$. 
This tendency manifests in varying degrees: from cases where the interior $L^\circ$ is devoid of critical points (Regime I), to cases where the boundary simply captures the global minimum (Regime II).

\subsubsection{The Metric Rarity of the Real Image}
\label{subsec:metric_rarity}

The dichotomy between the ``Smooth Region'' (trivial stabilizer, interior $L^\circ$) and the ``Singular Region'' (non-trivial stabilizer, boundary $\partial L$) derives its significance from the following geometric fact: the real image $L$ is metrically rare within the quotient space $Y(\mathbb{R})$. While $L$ and $Y(\mathbb{R})$ share the same topological dimension, the volume of $L = \pi(X(\mathbb{R}))$ becomes increasingly small as the system size grows. This rarity is universal across our models.

Consider first the case of symmetric polynomials, where the configuration space is $X(\mathbb{R})=\mathbb{R}^n$ and the group is the symmetric group $G=S_n$. 
Here, the quotient map $\pi:X\to Y$ sends $x=(x_1,...,x_n)$ to the elementary symmetric polynomials $y = (e_1, \ldots, e_n)$. These coordinates naturally parametrize the monic polynomial $P_y(t) = t^n - e_1 t^{n-1} + \dots + (-1)^n e_n$. Consequently, the real image $L$ represents the specific subset of parameters $y$ yielding monic polynomials with \emph{all real roots}. 
Already in the cubic case this region occupies only a small portion of coefficient space; Figure \ref{fig:s3_stratification} visualizes $L = \pi(\mathbb{R}^3)\subset \mathbb{R}^3$ for $n=3$.

We equip the quotient space $Y(\mathbb{R}) \cong \mathbb{R}^n$ with a Gaussian measure. Under the hypothesis that the Monic and Kac Gaussian ensembles share the same asymptotic decay rate (Assumption \ref{ass:ensemble_equivalence}), Theorem \ref{thm:rarity_kac} implies that the measure of the real image $L$ decays super-exponentially ($\sim e^{-C n^2}$).

\begin{figure}[htbp!]
    \centering
    \includegraphics[width=0.7\linewidth]{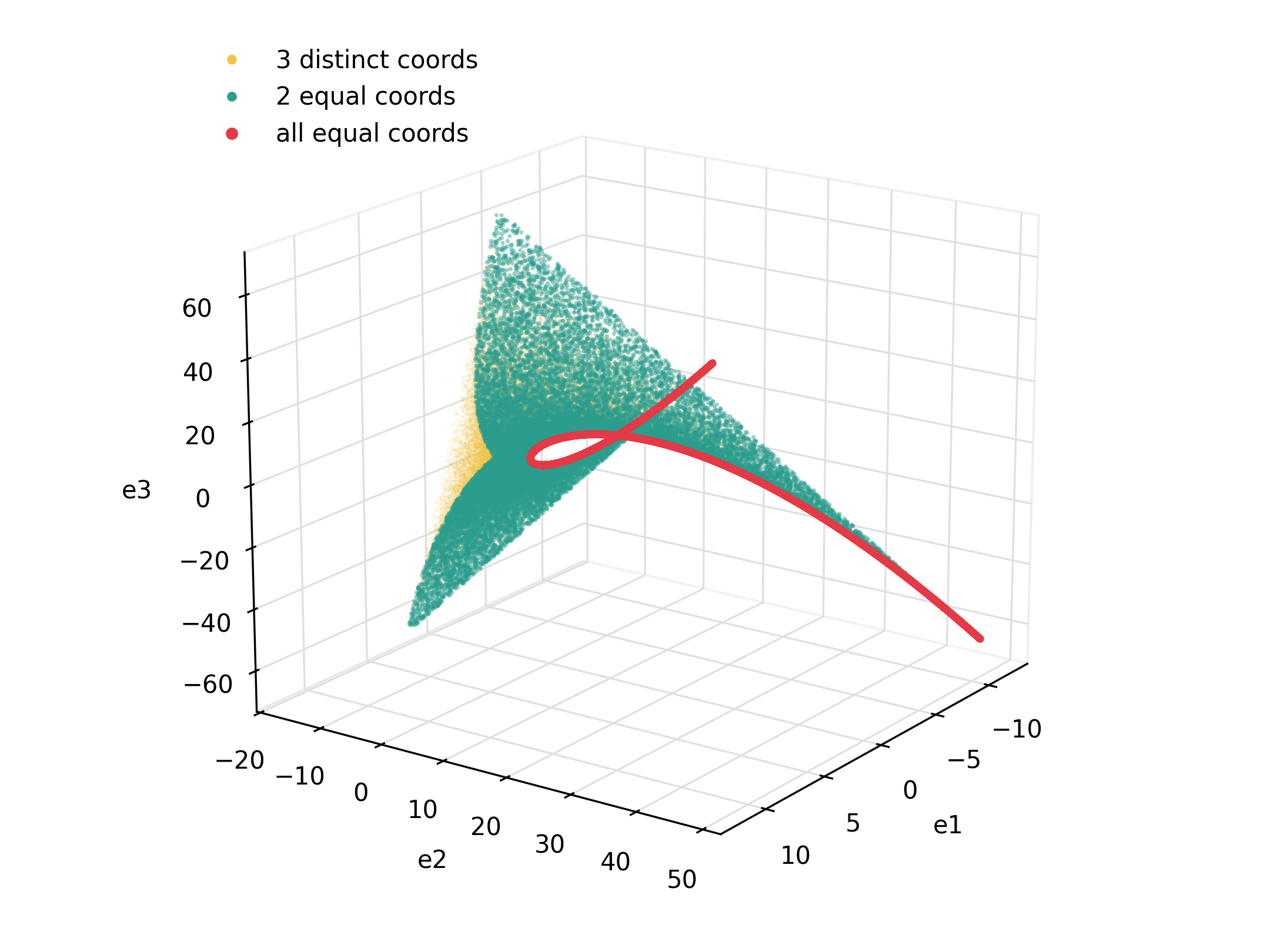} 
    \caption{\textbf{Visualization of the real image of $\pi: X(\mathbb{R}) \to Y(\mathbb{R})$ for $n=3$.} 
    The scatter plot displays the real image $L$ of the quotient map $\pi: \mathbb{R}^3 \to \mathbb{R}^3$ defined by elementary symmetric polynomials, given by $\pi(x_1,x_2,x_3) = (e_1, e_2, e_3) = (x_1 + x_2 + x_3, x_1 x_2 + x_2 x_3 + x_3 x_1, x_1 x_2 x_3)$.
    The geometry reveals a hierarchy of singularities corresponding to the stabilizer subgroups discussed in the text:
    \textbf{White (Ambient Space, $Y(\mathbb{R})$):} The full quotient space represents all real coefficients of monic cubic polynomials.
    \textbf{Yellow (Smooth Region, $L^\circ$):} Points with trivial stabilizers ($x_i \neq x_j$) form the 3D volume, representing coefficients of polynomials with three distinct real roots.
    \textbf{Green (Boundary, $\partial L$):} Points with $S_2$ stabilizers (two equal coordinates) form a 2D smooth boundary fold, representing coefficients of polynomials with 2 distinct real roots (one double root).
    \textbf{Red (Deep Singularity):} Points with full $S_3$ symmetry ($x_1=x_2=x_3$) collapse onto a 1D "cusp" or edge, representing coefficients of polynomials with a triple real root.}    \label{fig:s3_stratification}
\end{figure}

For a general finite group $G$ acting faithfully and linearly on a vector space $X(\mathbb{R})$, any $G$-invariant Hermitian metric on $X(\mathbb{C})$ induces a natural measure $\mu$ on $Y(\mathbb{R})$, as formally defined in Definition \ref{def:algebraic_involutions_measure}. 
In this metric setting, we measure the relative size of the real image within a ball $B(0,r)\subset Y(\mathbb{R})$ by considering the ratio of $\mu(L\cap B(0,r))$ to the total volume $\mu(B(0,r))$. 
Theorem \ref{thm:rarity_real_image_involutions} shows that this fraction is exactly $\frac{1}{\#\text{Inv}(G)}$ and does not depend on $r$, where $\text{Inv}(G)$ denotes the set of involutions in $G$.
This result provides an alternative measure for the real image in the case of $G = S_n$ permutation action on $\mathbb{R}^n$, where Corollary \ref{cor:Inv_Sn_decay} implies that the term $\frac{1}{|\text{Inv}(G)|}$ decays as $\sim e^{-C n\log(n)}$.

This rarity extends to the setting of interacting particles.
In this context, we formulate the system over the \textbf{physical shape space} $X_{\shape}^+ \subset X_{\shape}(\mathbb{R})$, defined by particle positions modulo global motions (see Definition \ref{def:shape_space}).
The geometric structure is established by assuming any Hermitian metric on the ambient configuration space of centered particle systems $\mathcal{V} = \mathbb{C}^{d \times (n-1)}$.
This metric descends canonically to the Root-Mean-Square Deviation (RMSD) metric on the quotient $X_{\shape}^{\mathrm{\pr}}$ (see Definition \ref{def:rmsd_distance}), which subsequently induces a natural measure $\mu$ on the quotient space $Y_{\shape}(\mathbb{R})$ (as constructed in Definition \ref{def:algebraic_involutions_measure_shape}).

With this metric framework in place, Theorem \ref{thm:relative_volume_real_image_shape} reveals that the domain of physically valid configurations $L=\pi(X_{\shape}^+)$ occupies a negligible fraction of the ambient quotient space.
Specifically, Corollary \ref{cor:Inv_Sn_decay_shape} establishes that the relative volume within a ball of radius $r$ decays exponentially:
\[
\frac{\mu(L\cap B(r,0))}{\mu(B(r,0))} = \frac{1}{2^{\min(d,n)} \cdot \#\mathrm{Inv}(S_n)} \sim e^{-C n\log(n)}.
\]

Consequently, for high-dimensional systems, $L$ is metrically rare within $Y_{\shape}(\mathbb{R})$. This geometric constraint forces the system into two distinct phenomenological regimes:

\subsubsection{Regime I: The ``Empty Interior'' (No Interior Critical Points)}
In polynomial optimization problems invariant under the action of a finite group, as well as in shallow neural networks invariant under permutations of hidden neurons (Arjevani et al.~\cite{arjevani2019, arjevani2021}, Schneider~\cite{schneider2025} and Figure~\ref{fig:distinct_values}), we observe prevalent symmetry. Geometrically, this corresponds to the scenario where the descended function $\tilde{f}$ possesses \textbf{no critical points} within the smooth interior of the real image $L$.
\emph{Equivalently}, the condition $d\tilde{f}=0$ has no solutions in $L^\circ$.
We propose that this arises from a volume disparity: since the physical domain $L$ occupies an increasingly small volume relative to the ambient quotient, the strict condition $d\tilde{f}=0$ is statistically unlikely to be satisfied within the interior $L^{\circ}$. Consequently, the critical points of $f$ arise exclusively from the singularities of the quotient map $\pi$---which correspond precisely to configurations with non-trivial stabilizers.

We illustrate this mechanism via a heuristic example. Consider an $S_n$-invariant polynomial $f:\mathbb{R}^n \to \mathbb{R}$ such that its quotient descendant $\tilde{f}$ is of degree $n$. By Bézout's theorem, such a function possesses at most $n^n$ critical points. Proceeding under Assumption \ref{ass:ensemble_equivalence}—specifically, that the real-root probability for the Monic Gaussian ensemble decays asymptotically as in the Kac ensemble—Corollary \ref{cor:rarity_monic} establishes that the measure of the physical domain $L$ decays super-exponentially. Consequently, the expected number of critical points falling within the interior $L^\circ$ vanishes, implying a statistical prohibition on trivial stabilizers in the asymptotic limit.

\paragraph{Recursive Rarity and High Symmetry}
The geometric rarity established above extends recursively across the lattice of subgroups.
Consider the search for critical points possessing a specific symmetry group $H \subseteq G$. By the Principle of Symmetric Criticality (Theorem \ref{thm:PSC}), this problem reduces to finding critical points of the restriction $f|_{X(\mathbb{R})^H}$ on the fixed-point subspace.

Within this stratum, the geometric landscape is defined by the action of the normalizer $N_G(H)$ on $X^H$, effectively factoring through the group $\overline{G} = N_G(H)/H$ (as defined in Definition \ref{def:normalizer_effective}).
Consequently, whether a generic critical point exists in this stratum depends on the "size" of the stratum's real image $L_H = \pi(X(\mathbb{R})^H)$ within the stratum quotient $Y^{(H)}(\mathbb{R})$.

Applying Lemma \ref{lem:stratum_volume}, we observe that the relative volume of this image is governed by the inverse count of involutions in the effective group:
\[
\frac{\operatorname{vol}\left(L_H\right)}{\operatorname{vol}\left(Y^{(H)}(\mathbb{R})\right)} = \frac{1}{\#\mathrm{Inv}(\overline{G})}.
\]
This relation uncovers a cascading selection mechanism. For small subgroups $H$, the effective group $\overline{G}$ remains large (comparable to $S_n$), resulting in a large number of involutions and a metrically rare real image $L_H$. In this regime, finding a critical point is statistically improbable. 
 However, as we restrict our attention to strata with larger stabilizers $H$, the effective group $\overline{G}$ shrinks, the number of involutions $\#\mathrm{Inv}(\overline{G})$ decreases, and the relative volume of the real image increases.
This monotonicity implies that the system is statistically driven away from low-symmetry strata and resides only where the symmetry $H$ is sufficiently high to render the real image $L_H$ voluminous enough to capture the critical points of the potential.

In Section \ref{sec:empirical_implications}, we formalize the connection between this theoretical mechanism and the empirical data. We translate the results of \cite{arjevani2019, arjevani2021, schneider2025} into this algebraic language, demonstrating how the ``Empty Interior'' phenomenon (no critical points of $\tilde{f}$ in $L^\circ$) provides an explanation for the experimentally observed prevalence of symmetry.

\subsubsection{Regime II: The Active Constraint (A Geometric Hypothesis)}

\label{subsubsec:active_constraint}

While the metric rarity established in Regime~I successfully explains the baseline \textit{prevalence} of symmetry, 
it leaves a fundamental phenomenon unaccounted for: the energetic \textit{ordering}. The metric argument implies that critical 
points should reside in the first stratum $L_H$ that is voluminous enough within $Y^{(H)}(\R)$ to statistically 
accommodate them. However, this fails to explain why, across both random $S_n$-invariant polynomials and physical 
clusters (e.g., $LJ_{13}$), the deepest minima exhibit \emph{higher} symmetry than the bulk critical points.

In $LJ_{13}$, the ``Empty Interior'' condition does not hold---approximately 94\% of critical points and 78\% of local minima are asymmetric. 
Nevertheless, the landscape is governed by a strict \textbf{energetic ordering}: the global minimum is attained at the unique, maximally symmetric icosahedron. 
Furthermore, this is not an outlier; the density of non-trivial symmetries increases monotonically as energy decreases, effectively localizing the deepest critical points within the high-symmetry strata of the boundary $\partial L$.
To address this, we propose a second geometric mechanism: the \textbf{Active Constraint}.

\paragraph{Heuristic description.}
We visualize the landscape of $\tilde{f}|_L$ through a geometric analogy: consider a bounded domain $L$ (analogous to a ``small carpet'') placed upon a vast, rugged terrain (analogous to the ``Himalayas''). If the domain were large, it might capture multiple deep valleys, potentially containing the global minimum in its interior. However, due to the metric rarity established above, $L$ is effectively small relative to the ambient landscape's variation.

Consequently, the domain is likely to rest upon a single dominant slope. While local non-convexities may create shallow interior minima, the overarching ambient gradient drives the global minimum toward the boundary $\partial L$, where the descent is geometrically arrested.
In our algebraic framework, the boundary $\partial L$ is the locus of non-trivial stabilizers (see Remark \ref{rem:symmetry_boundary_identification}), and strata of higher codimension correspond to larger stabilizers. Thus, the global minimum is driven to a high-codimension stratum of $\partial L$, where the global descent is intercepted by the geometry of the domain.

\begin{figure}[H]
  \centering
  \begin{subfigure}[b]{0.40\textwidth}
    \centering
    \includegraphics[width=\linewidth]{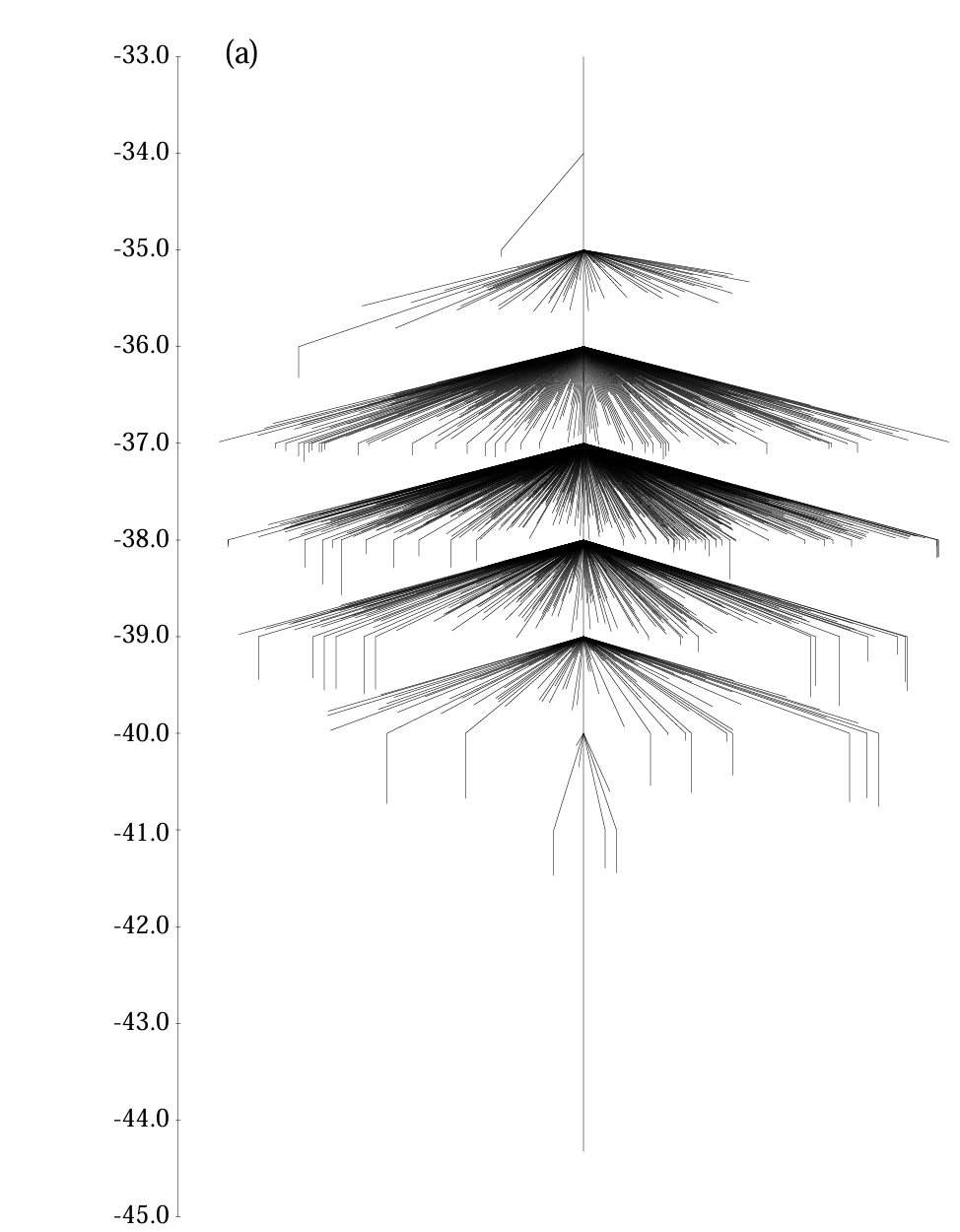}
    \caption{$LJ_{13}$}
    \label{fig:LJ13_disconnectivity}
  \end{subfigure}
  \hfill
  \begin{subfigure}[b]{0.40\textwidth}
    \centering
    \includegraphics[width=\linewidth]{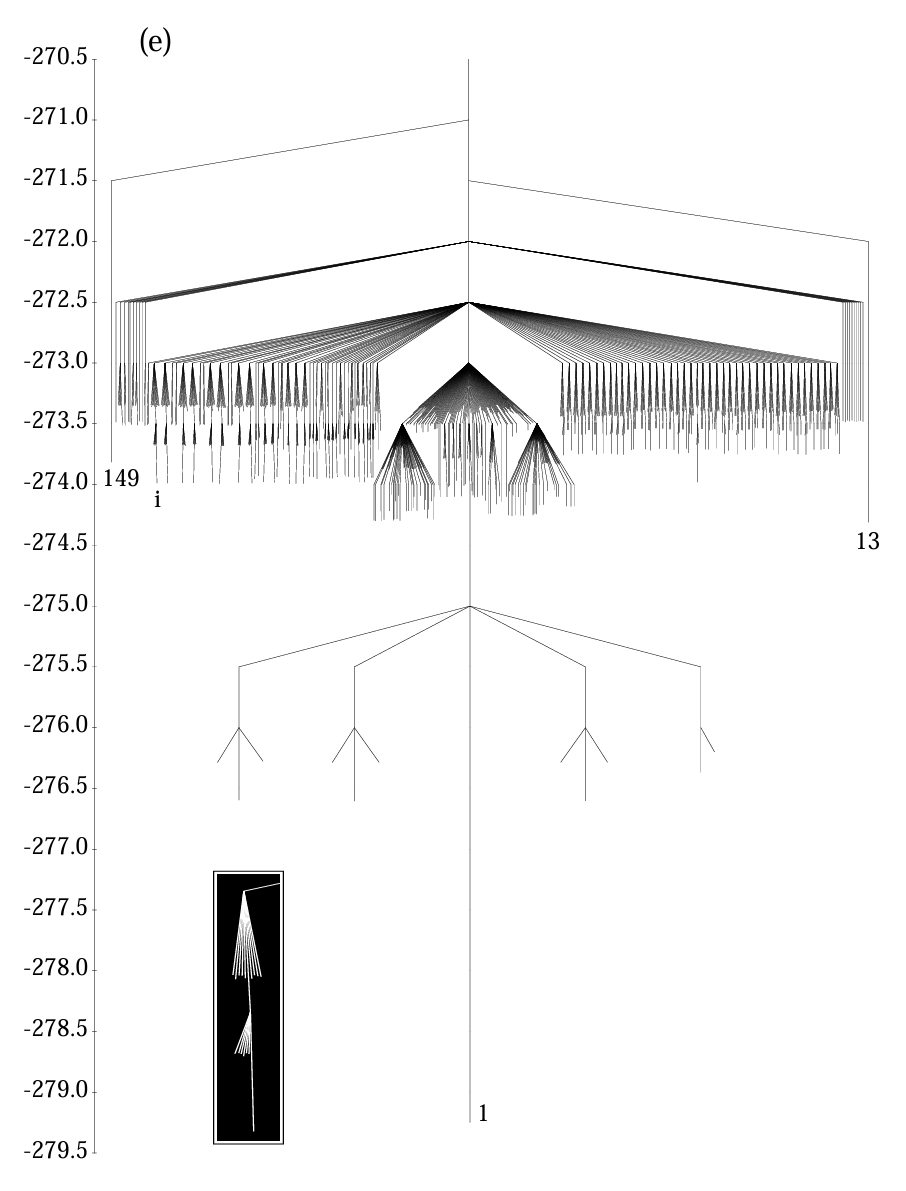}
    \caption{$LJ_{55}$}
    \label{fig:LJ55_disconnectivity}
  \end{subfigure}
\caption{\textbf{Topographical evolution of the real image $L$.} 
The disconnectivity graphs (visualized here as merge trees of energy sublevel sets) for $LJ_{13}$ (left) and $LJ_{55}$ (right) exhibit a unified "funnel" structure. 
All minima are shown for $LJ_{13}$, while for $LJ_{55}$ only branches leading to the 900 lowest-energy minima are displayed.
Numbers adjacent to nodes indicate how many minima the node represents, and selected branches are labeled by the energetic rank of their minima.
In the $LJ_{55}$ panel, the black inset is a zoom-in of the branch marked by $i$.
Reprinted with permission from J. P. K. Doye, M. A. Miller, and D. J. Wales, \textit{J. Chem. Phys.} \textbf{111}, 8417 (1999). Copyright 1999 AIP Publishing.}  \label{fig:disconnectivity_comparison}
\end{figure}

\paragraph{Physical Evidence: The Funnel Landscape.}

This intuition finds concrete support in Energy Landscape Theory \cite{wales1997global}. As visualized in Figure \ref{fig:disconnectivity_comparison}, Lennard-Jones clusters are characterized by ``funnel'' topographies---hierarchical organizations structured as a single, deep, and broad attraction basin containing relatively small sub-basins.

From our perspective, a "funnel" can be understood as the physical manifestation of a \textbf{coherent global gradient} of $\tilde{f}$ across the domain $L\subset Y(\R)$. 
Even when the landscape $\tilde{f}$ is multimodal, the essential geometric feature remains: the "global gradient" does not vanish in the interior. 
Instead, it creates a persistent downhill drift that directs the system outward, constraining the global minimum to the boundary $\partial L$. In this framework, the regular icosahedron of $LJ_{13}$ corresponds to a high-codimension stratum of the real image $L$---the locus where the global descent of the potential $\tilde{f}\mid_{Y(\R)}$ reaches the boundary of the domain $L$.

\paragraph{The $S_n$ invariant Polynomial Case.}
While the physical evidence strongly supports the Active Constraint mechanism via the funnel topology, 
the parallel behavior observed in random $S_n$-invariant polynomials implies that similar gradients might emerge in 
\textbf{generic random ensembles} restricted to a small domain $L$, absent of any specific physical structure.
However, establishing whether this mechanism drives the symmetry selection in these \textbf{unstructured models} 
remains an open challenge.

Future work is required to rigorously formulate the definition of a "global gradient," and subsequently to verify whether this 
gradient is indeed the causal factor driving the statistical preference for the boundary (and thus, high symmetry) in random 
models restricted to the real image $L$.

\subsection{Statement of Contributions: Results and Proposed Mechanisms}
We distinguish explicitly between rigorous geometric results, empirical findings, and the mechanisms proposed to explain them.

\paragraph{I. Mathematical Results.}
We consider the general setting where $X$ is an irreducible complex affine algebraic variety defined over $\mathbb{R}$, equipped with a faithful action of a group $G$. Within this framework, we establish the following theorems:

\begin{enumerate}
    \item \textbf{Probabilistic Rarity (The Gaussian $S_n$ Case) (Theorem \ref{thm:rarity_kac}):} 
    Specializing to $G=S_n$, we consider the standard Gaussian measure directly on the quotient space $Y(\mathbb{R}) \cong \mathbb{R}^n$. Proceeding under Assumption \ref{ass:ensemble_equivalence} (the asymptotic equivalence of Monic and Kac ensembles), we deduce that the measure of the real image $L$ decays super-exponentially as $\sim e^{-C n^2}$, rendering the asymmetric stratum statistically small for large $n$.

    \item \textbf{Geometric Rarity (General Finite Groups) (Theorem \ref{thm:rarity_real_image_involutions}):} 
    For finite groups acting on vector spaces, we introduce a natural measure on the quotient $Y(\mathbb{R})$ induced by a $G$-invariant Hermitian metric on $X$. Within this framework, we prove that the relative volume of the real image $L$ is exactly $(\#\mathrm{Inv}(G))^{-1}$, identifying the number of involutions as the controlling parameter for the scarcity of the physical domain.

    \item \textbf{Extension to Reductive Groups (Particle Systems) (Theorem \ref{thm:relative_volume_real_image_shape}):} 
    We extend the metric framework to the reductive group action governing particle systems, $G = O(d,\mathbb{C}) \times S_n$. Using a Hermitian metric invariant under the maximal compact subgroup $K = S_n \times O(d, \mathbb{R})$, we prove that the physically realizable domain $L$ constitutes a vanishingly small fraction of the ambient quotient, with relative volume decaying as $\sim 2^{-\min(d,n)} \cdot (\#\mathrm{Inv}(S_n))^{-1}$.
\end{enumerate}

\noindent\textbf{Unified Algebraic Perspective.} 
Beyond these specific estimates, we observe (Remark \ref{rem:galois_cohomology}) that the volume constraints derived above can be unified via Galois Cohomology: the physical image corresponds to the trivial cohomology class, while the ambient space is populated by non-trivial real forms.
\paragraph{II. Empirical Findings.}
Complementing the theoretical analysis, we report the following experimental result:
\begin{itemize}
    \item \textbf{Universality of Energetic Ordering (Regime II):} 
    We document extensive numerical evidence (Appendix \ref{app:boundary_experiments}) showing that the ``Regime II'' phenomenon—where symmetry increases as energy decreases—is not limited to physical potentials. We establish that this monotonic ordering emerges generically in random $S_n$-invariant polynomial landscapes, identifying it as a structural feature of invariant optimization rather than a consequence of specific physical forces.
\end{itemize}

\paragraph{III. Proposed Mechanisms.}
Based on the geometric rarity and the empirical ordering, we propose two driving mechanisms:
\begin{enumerate}
    \item \textbf{Regime I: The ``Empty Interior'' (No Critical Points in $L^\circ$).} 
    For non-convex potentials of moderate complexity (where the number of critical points does not overwhelm the super-exponential volume decay of $\mu(L)$), we posit that critical points are statistically excluded from the smooth locus of $L^\circ$. This forces critical points to reside on the singular locus, resulting in prevalent symmetry.

    \item \textbf{Regime~II (Energetic ordering by symmetry).} We formulate the \emph{Active Constraint hypothesis}: due to the metric rarity of the real image $L$, the physical domain acts as a small, bounded manifold subject to the ambient potential $\tilde{f}$. Consequently, the landscape restricted to $L$ is dominated by a coherent global gradient rather than local fluctuations. This structural configuration localizes the lowest energy states within the high-codimension strata of the boundary $\partial L$. We interpret the ``funnel'' topography of Lennard-Jones clusters simply as the manifestation of such a global gradient oriented toward the boundary strata of high symmetry.
\end{enumerate}

\subsection{Related Work}

The question of the origin of symmetry in physical systems has engaged researchers since the earliest days of mathematical physics. As early as 1894, Pierre Curie proposed the guiding principle that ``the symmetry of the effect is the maximal symmetry compatible with the existence of the phenomenon'' \cite{Curie1894}.
In the context of discrete geometry and optimization, Cohn \cite{Cohn2010} reframes this challenge as the search for the ``genetics of regular figures''---a term coined by Fejes T\'oth to describe the economy principle that generates order out of chaotic sets.
Cohn observes that while the algebraic classification of symmetry groups is well developed, a satisfactory explanation of this generative mechanism remains open.

He further cautions that the intuition that symmetric problems necessarily possess symmetric solutions is often misleading; a prominent example is the Kelvin Conjecture regarding space partition \cite{Kelvin1887}, which stood for over a century before being refuted by the less symmetric Weaire-Phelan structure \cite{Weaire1994}.

This search for a ``genetic'' mechanism is motivated by robust empirical evidence indicating that complex systems exhibit a statistical preference for symmetry. In chemical physics, the ``Epikernel Principle'' formulated by Ceulemans and Vanquickenborne \cite{Ceulemans1989} suggests that when a symmetric system becomes unstable (e.g., via the Jahn-Teller effect), symmetry breaking is not total. Instead, the system prefers to descend into maximal subgroups (epikernels), localizing in high-symmetry strata rather than collapsing to complete asymmetry.

The theoretical justification for the existence and stability of these symmetric critical points was established through two complementary mathematical pillars. First, Palais formulated the Principle of Symmetric Criticality'' (PSC), guaranteeing that critical points found within a symmetric subspace are valid critical points of the global function \cite{Palais1979}. While PSC ensures existence, the analytic basis for \textit{local stability} was laid by Waterhouse \cite{Waterhouse1983}. Responding to historical paradoxes, Waterhouse formalized the intuition known as the Extended Purkiss Principle.'' He proved that when the tangent space is an irreducible representation of the stabilizer subgroup, the Hessian matrix at the critical point is constrained to be a scalar multiple of the identity. This algebraic insight precludes the existence of ``mixed directions'' (some ascending, some descending), thereby preventing symmetric points from becoming saddle points and granting them inherent stability as minima or maxima.

A foundational hypothesis regarding the structure of the ground state in Grand Unified Theories is \textbf{Michel's Conjecture} 
\cite{Michel1980}. It posits that for an irreducible real representation $X(\mathbb{R})$ of a compact group $G$, 
the absolute minima of a any $G$-invariant quartic potential are attained at configurations with \textbf{maximal isotropy subgroups} (stabilizers).
The geometric mechanism underlying this phenomenon was systematically developed by Abud and Sartori \cite{Abud1983}, who analyzed the stratification of the physical domain $L = \pi(X(\mathbb{R}))$. They established the geometric framework of the orbit space to derive rigorous sufficient conditions under which the conjecture holds, demonstrating that within these constraints, the minima are forced to lie on the boundary strata.
Kim \cite{Kim1984} explicitly constructed the orbit spaces for low-dimensional representations. He demonstrated that whenever the potential is monotonic with respect to the orbit parameters, the absolute minimum is attained at the vertices of the orbit space, which correspond to maximal isotropy subgroups.

Similarly, in the context of discrete geometry---primarily concerning point configurations on the sphere---Cohn and Kumar \cite{CohnKumar2007} formulated the rigorous concept of \textbf{universal optimality}. 
Consider a configuration of $N$ points $x = (x_1, \dots, x_N)$ on the unit sphere $S^{d-1}$. The total potential energy is defined as a functional $E_f: (S^{d-1})^N \to \mathbb{R}$ given by the pairwise sum:
\[
E_f(x) = \sum_{1 \le i < j \le N} f(\|x_i - x_j\|^2).
\]
Universal optimality requires a configuration to minimize $E_f$ globally, simultaneously for the entire class of \textbf{completely monotonic} interaction functions.

While explicit counterexamples refute the universality of Michel's conjecture \cite{AbudCounter1984}, and universal optima are guaranteed only in rare cases, the statistical tendency for high-symmetry minima persists. A key distinction in scope exists between these approaches and the present work. While the Michel-Abud-Sartori program and Cohn's framework focus on deterministic guarantees for \textit{maximal} or \textit{universal} symmetry, our framework utilizes the \textit{metric rarity} of the real image to explain the \textit{statistical} preference for \textit{high} (not necessarily maximal) symmetry. This mechanism operates generically across broad families of potentials, outside the strict constraints required for universal optimality.

This preference for order extends to the thermodynamic limit ($N \to \infty$). As reviewed by Blanc and Lewin \cite{Blanc2015}, the ``Crystallization Conjecture'' posits that interacting particles spontaneously arrange in periodic structures at low temperatures. While rigorous proofs---such as Theil's for Lennard-Jones-like potentials \cite{Theil2006}---are rare and analytically demanding, the empirical prevalence of crystallization underscores the universality of symmetry selection.

A general statistical explanation for these observations was proposed by Wales, who extended the analysis to include ``near-symmetry'' \cite{wales1997global}. In an extensive survey of atomic clusters and molecular models, Wales identified a strong correlation between high (exact or approximate) symmetry and energetic extremality. He argued that such structures are composed of fewer distinct energy contributions, effectively reducing the number of independent random variables summing to the total energy. This reduction leads to a larger variance in the energy distribution, thereby populating the distribution tails. Consequently, symmetric and near-symmetric configurations appear with higher probability at the energetic extremes (both minima and maxima).
We explicitly discuss the validity of this hypothesis and its integration into our geometric framework in Appendix \ref{app:mechanism_disentanglement}.

Despite these significant theoretical and empirical advances, a comprehensive answer to the fundamental question of symmetry selection remains elusive. While the existing frameworks successfully address specific aspects---ensuring the existence of symmetric critical points, verifying their local stability, or rationalizing their statistical presence in the tails of distributions---none offers a complete, unified explanation for the prevalence of symmetric critical points in global optimization. The mechanism by which a complex system selects a symmetric global minimum from a configuration space in which symmetric points have measure zero remains not fully understood. In this work, we aim to bridge this gap by constructing a rigorous deterministic mechanism that serves as the geometric substrate for this statistical tendency.

\subsection{Paper Organization}
The remainder of this paper is organized as follows.

In \textbf{Section \ref{sec:algebraic_framework}}, we establish the algebraic foundation, defining the general $G$-variety setup, the quotient space construction, and the Principle of Symmetric Criticality.
\textbf{Section \ref{sec:geometry_quotient}} provides the geometric dictionary connecting algebra to topology. 
We prove that the differential of the quotient map is surjective if and only if the stabilizer is 
trivial (Theorem \ref{thm:surjectivity_diff}) and identify configurations with even order stabilizer with the boundary of the real image (Theorem \ref{thm:boundary_parity}).
In \textbf{Section \ref{sec:metric_rarity}}, we quantify the "metric rarity" of the real image. We begin by analyzing the quotient of $S_n$ acting on $\mathbb{R}^n$, utilizing probabilistic results regarding polynomials with real roots. We then define a natural measure for general finite group actions on vector spaces to calculate the rarity of the real image, and discuss how this rarity applies recursively across fixed-point strata.
\textbf{Section \ref{sec:LJ_setup}} extends this framework to physical particle systems. We formalize the Shape Space and the Total Quotient, proving that the volume of physically realizable configurations decays exponentially within the ambient quotient (Theorem \ref{thm:relative_volume_real_image_shape}).
In \textbf{Section \ref{sec:empirical_implications}}, we apply our theoretical framework to interpret empirical observations. We map the experimental setups from the works of Arjevani et al. and Schneider to our algebraic model, demonstrating how the metric rarity of the real image explains the prevalence of symmetry (``Regime I'') in these landscapes.
Finally, \textbf{Section \ref{sec:conclusion}} summarizes the findings and discusses the implications for Energy Landscape Theory and future research directions.

Appendices are dedicated to supporting analysis: \textbf{Appendix \ref{app:capacity}} provides a heuristic counting baseline deriving from the Principle of Symmetric Criticality, and \textbf{Appendix \ref{app:boundary_experiments}} presents extensive additional numerical experiments confirming the robustness of the Regime II phenomenon. Finally, \textbf{Appendix \ref{app:mechanism_disentanglement}} critically analyzes the drivers of the "Active Constraint," disentangling intrinsic geometric effects from the variance-based mechanism proposed by Wales.

\FloatBarrier
\section{The Algebraic Framework}
\label{sec:algebraic_framework}
To formalize the geometric stratification described in the introduction, we define the algebraic structure of the configuration space and its quotient.

\begin{definition}[The $G$-Variety Setup]
\label{def:variety_setup}
Let $X$ be an irreducible complex affine algebraic variety defined over $\mathbb{R}$, equipped with a faithful action of a finite group $G$ by real automorphisms. We denote by $X(\mathbb{R})$ the set of real points of $X$.
\end{definition}

\begin{definition}[Stabilizer Subgroup]
\label{def:stabilizer}
For any point $x \in X$, the \textbf{stabilizer subgroup} is defined as:
\[
G_x = \{g \in G \mid g \cdot x = x\}.
\]
\end{definition}

% --- Definition 3: Fixed Point Subspace ---
\begin{definition}[Fixed Point Subspace]
\label{def:fixed_point_space}
For any subgroup $H \subseteq G$, the \textbf{fixed point subspace} (or symmetric stratum) is the closed subvariety defined by:
\[
X^H = \{x \in X \mid h \cdot x = x, \quad \forall h \in H\}.
\]
We denote the real locus of this subspace by $X(\mathbb{R})^H = X^H \cap X(\mathbb{R})$.
\end{definition}

% --- Definition 4: Invariant Functions ---
\begin{definition}[Invariant Functions]
\label{def:invariant_function}
Let $f: X \to \mathbb{C}$ be a regular function defined over $\mathbb{R}$. We say $f$ is \textbf{$G$-invariant} if $f(g \cdot x) = f(x)$ for all $g \in G, x \in X$.
We denote the ring of such functions by $\mathbb{R}[X]^G$.
\end{definition}

\begin{definition}[The Quotient Space]
\label{def:quotient_space}
The \textbf{quotient space} $Y = X \sslash G$ is constructed as the affine variety corresponding to the ring of invariants, $Y := \Spec(\mathbb{R}[X]^G)$. The inclusion $\mathbb{R}[X]^G \hookrightarrow \mathbb{R}[X]$ induces the categorical quotient map $\pi: X \to Y$.
\end{definition}

Finally, the following principle ensures that searching for symmetric critical points is equivalent to optimizing directly on the stratum $X(\mathbb{R})^H$.

% --- Theorem: PSC ---
\begin{theorem}[Principle of Symmetric Criticality \cite{Palais1979}]
\label{thm:PSC}
Let $f: X(\mathbb{R}) \to \mathbb{R}$ be a smooth $G$-invariant function and let $H \subseteq G$ be a subgroup.
If a point $x \in X(\mathbb{R})^H$ is a critical point of the restriction $f|_{X(\mathbb{R})^H}$, then $x$ is a critical point of $f$ on the entire manifold $X(\mathbb{R})$.
\end{theorem}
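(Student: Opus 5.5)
The plan is the standard Palais argument, specialized to a finite group acting on the smooth real manifold $X(\mathbb{R})$ (or its smooth locus). Fix a $G$-invariant Riemannian metric on $X(\mathbb{R})$, obtained by averaging any Riemannian metric over the finite group $G$. Then every $g \in G$ acts by isometries. Since $f$ is $G$-invariant, $f \circ g = f$. Differentiating gives $df_{g x} \circ dg_x = df_x$, and because $g$ is an isometry this means $\nabla f(gx) = dg_x(\nabla f(x))$. In other words, the gradient field $\nabla f$ is $G$-equivariant.

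Now let $x \in X(\mathbb{R})^H$, so $hx = x$ for all $h \in H$. Equivariance gives $dh_x(\nabla f(x)) = \nabla f(x)$, so $\nabla f(x)$ lies in the fixed subspace $(T_x X(\mathbb{R}))^H$ of the linear $H$-representation $h \mapsto dh_x$.

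The key step is to identify this fixed subspace with the tangent space $T_x\bigl(X(\mathbb{R})^H\bigr)$. For this I would use the exponential map of the invariant metric at $x$. Since $H$ fixes $x$ and acts by isometries, $\exp_x$ is $H$-equivariant: $h(\exp_x v) = \exp_x(dh_x v)$. On a small ball it is a diffeomorphism onto a neighborhood of $x$. Therefore the fixed locus $X(\mathbb{R})^H$ near $x$ is exactly $\exp_x$ of the linear subspace $(T_xX(\mathbb{R}))^H$. This shows that $X(\mathbb{R})^H$ is a smooth submanifold near $x$ with tangent space $(T_xX(\mathbb{R}))^H$. This is the main obstacle, in the sense that it is where smoothness of $X(\mathbb{R})$ at $x$ is needed. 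If $X$ is singular, I would either restrict the statement to smooth points or, in the paper's linear setting $X(\mathbb{R})=V$, bypass the exponential map: $V^H$ is then a linear subspace equal to its own tangent space.

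To conclude, suppose $x$ is critical for $f|_{X(\mathbb{R})^H}$. Then $df_x$ vanishes on $T_x(X(\mathbb{R})^H) = (T_xX(\mathbb{R}))^H$, so $\langle \nabla f(x), w\rangle = 0$ for every $H$-fixed vector $w$. But $\nabla f(x)$ is itself $H$-fixed, so taking $w = \nabla f(x)$ gives $\|\nabla f(x)\|^2 = 0$. Hence $df_x = 0$ on all of $T_xX(\mathbb{R})$, and $x$ is a critical point of $f$ on the whole manifold.

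An equivalent route avoids the gradient: decompose $T_x = T_x^H \oplus W$, where $W$ is the sum of the nontrivial isotypic components. The $H$-invariant linear functional $df_x$ must vanish on $W$, because averaging over $H$ projects onto $T_x^H$ and kills $W$. Combined with the vanishing of $df_x$ on $T_x^H$, this again gives $df_x = 0$.
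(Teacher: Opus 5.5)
Your proof is correct and is essentially the argument the paper relies on. The paper gives no proof of its own beyond the citation to Palais, and your steps are Palais's compact-group proof specialized to finite $G$: average a metric to make it $G$-invariant, deduce that $\nabla f$ is $G$-equivariant, and use the equivariant exponential map to identify $T_x\bigl(X(\mathbb{R})^H\bigr)$ with $\bigl(T_xX(\mathbb{R})\bigr)^H$, noting that only $H$-invariance of $f$ is actually used. Your remark that smoothness of $X(\mathbb{R})$ at $x$ is needed is also apt, since the paper's phrase ``the entire manifold $X(\mathbb{R})$'' tacitly assumes it.
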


Theorem \ref{thm:PSC} guarantees the existence of symmetric critical points but implies nothing about their abundance.

\begin{remark}[Heuristic Counting Baseline]
\label{rem:counting_baseline}
While Theorem \ref{thm:PSC} guarantees the existence of symmetric critical points, it also provides a mechanism for estimating their statistical abundance. By combining this principle with standard algebraic root-counting heuristics (such as the "square-root law"), one can derive a baseline prediction for the ratio of symmetric to asymmetric critical points in random landscapes. We provide a detailed derivation of this counting argument and its parameter thresholds in Appendix \ref{app:capacity}.
\end{remark}

\section{The Geometry of the Quotient Map: Singularities and Boundaries}
\label{sec:geometry_quotient}
The statistical preference of invariant potentials for symmetric configurations can be traced to the geometry of the real image $L = \pi(X(\mathbb{R}))$. In this section, we establish a correspondence between the algebraic properties of a point (its stabilizer) and its topological placement within the quotient (interior vs.\ boundary).

We begin by characterizing the "smooth" behavior of the quotient map. The following theorem establishes that away from symmetric configurations, the map behaves like a local diffeomorphism and conversely, singularities in the map correspond precisely to points with non-trivial stabilizers.

\begin{proposition}[Surjectivity of the Quotient Differential; see e.g., \cite{brion_luna}]
\label{thm:surjectivity_diff}
Let $X$ be an irreducible complex affine algebraic variety defined over $\mathbb{R}$, equipped with a faithful action of a finite group $G$. Let $x \in X(\mathbb{R})$ be a smooth point and let $y = \pi(x)$ be its image under the quotient map $\pi: X \to Y := X/G$.
The differential $d\pi_x: T_xX(\mathbb{R}) \to T_yY(\mathbb{R})$ is surjective if and only if the stabilizer $G_x$ is trivial.
\end{proposition}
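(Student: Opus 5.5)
The plan is to prove the two implications separately. I interpret $T_yY(\mathbb{R})$ as the real form of the Zariski tangent space $T_yY$; since $x$ is real, $y=\pi(x)$ is real, and $d\pi_x$ is defined over $\mathbb{R}$. Real and complex ranks of a real linear map agree, so it suffices to argue over $\mathbb{C}$ with $V=T_xX$ and $W=T_yY$. Throughout I use $\dim Y=\dim X$, which holds because $\pi$ is finite and surjective for a finite group, and $\dim W\ge \dim Y$ at every point.

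\textbf{Non-trivial stabilizer $\Rightarrow$ not surjective.} Let $H=G_x\neq\{e\}$. Every $h\in H$ fixes $x$, and $\pi\circ h=\pi$. Differentiating at $x$ gives $d\pi_x\circ dh_x=d\pi_x$ for all $h\in H$. Hence $d\pi_x$ kills every vector $v-h\cdot v$, so it factors through the coinvariants $V_H\cong V^H$. This gives the bound
\[
\operatorname{rank} d\pi_x\le \dim V^H .
\]
I then claim $V^H\subsetneq V$. Suppose instead that some $h\neq e$ in $H$ acted trivially on $T_xX$. Since $x$ is a smooth fixed point of the finite cyclic group $\langle h\rangle$, the action can be linearized near $x$: averaging a system of local coordinates over $\langle h\rangle$ gives $h$-equivariant coordinates, in the analytic or formal sense. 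Then $h$ would be the identity on a neighbourhood of $x$, hence on all of the irreducible variety $X$, which contradicts faithfulness. Therefore
\[
\operatorname{rank} d\pi_x<\dim V=\dim X=\dim Y\le \dim W,
\]
so $d\pi_x$ is not surjective.

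\textbf{Trivial stabilizer $\Rightarrow$ surjective.} If $G_x=\{e\}$, the orbit $Gx$ consists of $|G|$ distinct points. I will show that $\pi$ is étale at $x$. One route is Luna's étale slice theorem with a trivial slice group, which is what the cited reference provides. A hands-on alternative works as follows. Choose an affine $G$-stable open neighbourhood $U$ of the orbit on which $G$ acts freely; such a $U$ exists because the locus of points with non-trivial stabilizer is closed and $G$-stable. On $U$, the map $U\to U/G$ is a Galois étale cover; this is the standard fact that free quotients by finite groups are étale torsors. Consequently $U/G$ is smooth at $y$, and $d\pi_x$ is an isomorphism $V\to W$, in particular surjective. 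Because $U/G\subset Y$ is open and the quotient commutes with the localization, $T_yY=T_y(U/G)$.

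The main obstacle is the step "$h$ acts trivially on $T_xX$ $\Rightarrow$ $h$ acts trivially on $X$", that is, the local linearization of a finite group at a smooth fixed point together with irreducibility. I would handle it via the averaging (Cartan) argument on the completed local ring $\widehat{\mathcal O}_{X,x}$, which is a power series ring because $x$ is smooth. A second, milder subtlety is matching "real" tangent spaces in the statement with Zariski tangent spaces. The argument needs only that all the maps involved are defined over $\mathbb{R}$, so ranks are unchanged by base change.
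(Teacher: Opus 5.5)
Your proof is correct, but for the non-surjectivity implication it takes a different route from the paper. The paper first invokes Luna's \'etale slice theorem to replace $\pi$ near $x$ by the linear quotient $q\colon V\to V\sslash G_x$, $V=T_xX$. It then computes the rank exactly: the image of $(dq_0)^*$ is the space of $G_x$-invariant linear forms, so $\operatorname{rank} dq_0=\dim V^{G_x}$, and surjectivity onto a space of dimension at least $\dim V$ forces $V^{G_x}=V$. Its last step, that $G_x$ acts faithfully on $T_xX$, is only asserted. You avoid the slice theorem entirely in this direction. Differentiating $\pi\circ h=\pi$ gives only the upper bound $\operatorname{rank} d\pi_x\le\dim V^{G_x}$, but together with $\dim T_yY\ge\dim Y=\dim X$ that is all the statement needs. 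Your averaging argument in $\widehat{\mathcal O}_{X,x}$, with $\mathcal O_{X,x}\hookrightarrow\widehat{\mathcal O}_{X,x}$ and irreducibility of $X$, actually proves the faithfulness of the isotropy representation that the paper takes for granted. For the converse, your appeal to ``free finite quotients are \'etale torsors'' is in substance Luna's theorem with trivial slice group, so the two proofs agree there. One detail is worth making explicit: take the $G$-stable affine neighbourhood to be $U=\pi^{-1}(Y_f)$, where $f\in\mathbb{C}[Y]$ vanishes on the closed set $\pi\bigl(\bigcup_{g\neq e}X^g\bigr)$ but not at $y$; this gives $U\sslash G=Y_f$ open in $Y$ directly. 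In short, the paper's route yields an exact rank formula from one computation, at the cost of heavier machinery and an unproved faithfulness claim. Yours is more elementary for the hard direction and self-contained on the point the paper glosses over.
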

\begin{proof}
    \textbf{Step 1: Reduction to the Tangent Space (Luna's Slice Theorem)}
    
    Since $G$ is finite and $x \in X(\mathbb{R})$ is a smooth point, we apply Luna's Slice Theorem~\cite[Theorem 5.3]{brion_luna}. Let $V = T_x X$ be the tangent space at $x$. 
    Let $V(\mathbb{R})$ denote the set of real points of this vector space.
    
    According to the Slice Theorem, there exists a $G_x$-invariant locally closed subvariety $S \subset X$ containing $x$, such that the natural $G$-morphism $\psi: G \times^{G_x} S \to X$ is \textbf{excellent} (see \cite[Definition 5.7]{brion_luna}). Since $x$ is a smooth point, we can identify $S$ with a $G_x$-invariant neighborhood $U$ of $0$ in $V$. 
    
    This yields the following commutative diagram of real algebraic varieties and morphisms, where the horizontal maps are étale:
    \[
    \begin{tikzcd}[row sep=large, column sep=huge]
        G \times^{G_x} U \arrow[r, "\psi"] \arrow[d, "q"'] 
        & X \arrow[d, "\pi"] \\
        U /\!/ G_x \arrow[r, "\overline{\psi}"] 
        & X /\!/ G
    \end{tikzcd}
    \]
    Here, $q$ is induced by the projection to the quotient of the linear action.
    Since $\overline{\psi}$ is an étale morphism of real varieties (a property implied by $\psi$ being excellent~\cite[Definition 5.7(i)]{brion_luna}), it induces a local homeomorphism on the set of real points.
    
    The horizontal maps $\psi$ and $\overline{\psi}$ are étale \cite[Theorem 5.3]{brion_luna}. Étale morphisms induce isomorphisms on the tangent spaces. Therefore, the surjectivity of $d\pi_x$ is equivalent to the surjectivity of the differential of the quotient map $dq_0: T_0V \to T_{q(0)}(V/\!/G_x)$ at the origin.

    \medskip
    \textbf{Step 2: Analysis of the Linear Quotient.}
    The problem reduces to a statement about linear representations: Let $V = T_xX$ and let $q: V \to V/\!/G_x$ be the quotient by the finite linear group $G_x$. We must show that $dq_0$ is surjective if and only if $G_x$ is trivial.

    Let $\mathbb{C}[V]$ be the algebra of polynomial functions on $V$, and $\mathbb{C}[V]^{G_x}$ be the algebra of invariants defining $V/\!/G_x$. Let $\mathfrak{m}_0 \subset \mathbb{C}[V]$ and $\mathfrak{n}_0 \subset \mathbb{C}[V]^{G_x}$ be the maximal ideals corresponding to the origin in $V$ and its image in the quotient, respectively.
    The Zariski tangent spaces are dual to the cotangent spaces: $T_0V \cong (\mathfrak{m}_0/\mathfrak{m}_0^2)^*$ and $T_{q(0)}(V/\!/G_x) \cong (\mathfrak{n}_0/\mathfrak{n}_0^2)^*$.
    
    The dual of the differential, $(dq_0)^*: \mathfrak{n}_0/\mathfrak{n}_0^2 \to \mathfrak{m}_0/\mathfrak{m}_0^2$, maps the class of an invariant function $f$ to its linear term at the origin $df_0$. Consequently, the image of $(dq_0)^*$ corresponds precisely to the subspace of $G_x$-invariant linear forms on $V$:
    \[
    \operatorname{Im}((dq_0)^*) \cong (V^*)^{G_x}.
    \]
    
    \medskip
    \textbf{Step 3: Conclusion.}
Note that the rank of the linear map $dq_0$ is equal to the dimension of the image of its dual. Thus:
$$\operatorname{rank}(dq_0) = \dim ((V^*)^{G_x}) = \dim (V^{G_x}).$$
This implies that the image of the differential corresponds exactly to the subspace of fixed points $V^{G_x}$.
On the other hand, for $dq_0$ to be surjective onto the tangent space $T_{q(0)}(V/\!/G_x)$, its rank must be at least the dimension of the quotient variety. 
Since $G$ is finite, $\dim(V/\!/G_x) = \dim V$. Thus, a necessary condition for surjectivity is:
$$\dim (V^{G_x}) \geq \dim V.$$This equality holds if and only if $V^{G_x} = V$, which means the group $G_x$ acts trivially on the tangent space $V$. 
Finally, since $x$ is a smooth point and the action of $G$ on $X$ is faithful, 
the linearized action of the stabilizer $G_x$ on the tangent space $T_xX$ must also be faithful. 
Therefore, the action is trivial if and only if the group itself is trivial:$$G_x = \{1\}.$$
\end{proof}

Theorem \ref{thm:surjectivity_diff} implies that points with trivial stabilizers map to the interior. We now determine which symmetric configurations constitute the topological boundary of the real image. The following result establishes that this distinction is governed solely by the parity of the stabilizer.

\begin{theorem}[Boundary Stratification; adapted from \cite{procesi1985}]
\label{thm:boundary_parity}
    Let $X$ be a complex affine irreducible algebraic variety defined over $\mathbb{R}$, and let $G$ be a finite group acting faithfully on $X$ by real algebraic automorphisms.
    Let $Y = X//G$ be the algebraic quotient with quotient map $\pi:X\to Y$, and let $X(\mathbb{R})$ and $Y(\mathbb{R})$ denote the real points.
    Let $L = \pi(X(\mathbb{R}))\subset Y(\mathbb{R})$ be the real image.
    If $x \in X(\mathbb{R})$ is a smooth point, then $\pi(x) \in \partial L \subset Y(\mathbb{R})$ if and only if the order of the stabilizer $|G_x|$ is even.
\end{theorem}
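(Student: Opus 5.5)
The plan is to reduce Theorem~\ref{thm:boundary_parity} to a statement about a linear representation of the stabilizer, exactly as in the proof of Proposition~\ref{thm:surjectivity_diff}, and then settle the linear case with a parity argument on complex conjugation.

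\textbf{Step 1: Reduction to the slice.} Set $H=G_x$. Because $G$ acts by real automorphisms, complex conjugation $\sigma$ commutes with the action. Since $x$ is real, $\sigma$ induces a real structure on $V=T_xX$ that commutes with the linear $H$-action. I will use the excellent slice diagram from the proof of Proposition~\ref{thm:surjectivity_diff}, choosing the slice $U\ni 0$ to be $\sigma$-stable. Then $\overline{\psi}:U\sslash H\to Y$ is étale and defined over $\mathbb{R}$, so it is a local homeomorphism on real points near $q(0)$.

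Next I must check that $\overline{\psi}$ carries the germ of $q(U(\mathbb{R}))$ at $q(0)$ onto the germ of $L$ at $y=\pi(x)$. A real point of $X$ close to $x$ lies in the image of $\{1\}\times U$. Conversely, a real point of $Y$ near $y$ whose fibre contains a real point of $X$ has such a real point near the orbit $Gx$. After translating by some $g\in G$, that point is near $x$, hence of the form $\psi(1,u)$ with $u\in U(\mathbb{R})$.

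Making this identification of germs rigorous is the step I expect to be the main technical obstacle. It requires that $\psi$ is injective on real points near $\{1\}\times\{0\}$ in a way compatible with $\sigma$, which I would extract from excellence, i.e.\ $G\times^{H}U\cong X\times_Y (U\sslash H)$. Once it is in place, the question becomes: is $q(0)$ an interior point of $q(V(\mathbb{R}))$ in $(V\sslash H)(\mathbb{R})$?

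\textbf{Step 2: $|H|$ odd implies interior.} Take a real point $z\in (V\sslash H)(\mathbb{R})$. Its fibre is a single $H$-orbit $O$, and since $z$ is real we have $\sigma(O)=O$. The restriction of $\sigma$ to $O$ is an involution of a finite set of size $|H|/|H_v|$, which is odd. An involution of an odd set has a fixed point, so $O$ contains a real vector and $z\in q(V(\mathbb{R}))$. Hence near $0$ every real point of the quotient lies in the image, and $\pi(x)$ is interior.

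\textbf{Step 3: $|H|$ even implies boundary.} By Cauchy's theorem there is an involution $h\in H$. Since $H$ acts faithfully on $V$ (the same argument as in Proposition~\ref{thm:surjectivity_diff}), the $(-1)$-eigenspace $V_-(\mathbb{R})$ of $h$ is nonzero; let $V_+(\mathbb{R})$ be the $(+1)$-eigenspace. Consider the twisted real form
\[
W=\{a+ib \mid a\in V_+(\mathbb{R}),\ b\in V_-(\mathbb{R})\}.
\]
For $v=a+ib\in W$ we have $\sigma(v)=a-ib=hv$, so $q(v)$ is a real point of the quotient. The set $W$ is a totally real subspace of full real dimension, hence Zariski dense in $V$. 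Since the locus of trivial stabilizer is Zariski open and nonempty, a generic $v\in W$ has $H_v=1$.

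Suppose $gv$ were real for some $g\in H$. Then $gv=\sigma(gv)=g\sigma(v)=ghv$, which forces $hv=v$, contradicting $H_v=1$. So $q(v)\notin q(V(\mathbb{R}))$. The set $W$ is a cone and $H_{tv}=H_v$ for $t>0$, so letting $t\to 0$ gives the points $q(tv)$. These are not in the image and converge to $q(0)$, while $q(0)$ itself lies in the image. Therefore $\pi(x)\in\partial L$.
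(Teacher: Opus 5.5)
Your proposal is correct and follows the paper's proof. Both reduce via Luna's slice to the linear action of $G_x$ on $T_xX$. For the even case, both use a twisted vector built from the $(-1)$-eigenspace of an involution; the paper takes $v_\epsilon=i\epsilon v$ with $v\in V_-$, whose orbit is purely imaginary, and this avoids your genericity argument, since only $hv\neq v$ is needed. For the odd case, both prove that $r|_{V(\mathbb{R})}$ is surjective onto real points: the paper via $\overline{u}=gu\Rightarrow u=g^2u\Rightarrow gu=u$, you via a conjugation-fixed point on an odd-size orbit.

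Your Step~1 is more careful than the paper's, which asserts that étaleness alone preserves the boundary property, without the germ identification you correctly flag.
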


\begin{proof}
    We adapt the argument of \cite[Corollary~3.9]{procesi1985} to our setting.\\
    \textbf{Step 1: Reduction to the Tangent Space}

    We proceed analogously to the proof of Theorem \ref{thm:surjectivity_diff}. Let $V = T_x X$ denote the tangent space at $x$. 
    By Luna's Slice Theorem, the quotient map $\pi: X \to X \sslash G$ is locally modeled (in the étale topology) by the linear quotient map $r: V \to V \sslash G_x$.

    Since the relating morphisms are étale, they induce local homeomorphisms on the set of real points. Therefore, the topological property of lying on the boundary of the image is preserved:
    \[
    \pi(x) \in \partial \pi(X(\mathbb{R})) \iff 0 \in \partial r(V(\mathbb{R})).
    \]
    Thus, it suffices to prove the statement for the linear representation of the stabilizer $G_x$ on $V$.
    \vspace{1em}
    \textbf{Step 2: The Even Case ($|G_x|$ is even)}
    
    Assume $|G_x|$ is even. By Cauchy's Theorem, there exists an element $\sigma \in G_x$ of order $2$. Since $G$ acts faithfully on irreducible $X$, the action of $\sigma$ on $V(\mathbb{R})$ is non-trivial. Since $\sigma^2 = \text{id}$, we have the eigenspace decomposition of the real vector space $V(\mathbb{R}) = V_+ \oplus V_-$. Since $\sigma \neq \text{id}$, $V_-$ is non-trivial. Fix a non-zero vector $v \in V_-$.
    
    Consider the curve of purely imaginary vectors $v_\epsilon = i \epsilon v$ for $\epsilon \in \mathbb{R}$. We observe:
    \[
        \overline{v_\epsilon} = -i \epsilon v = i \epsilon (-v) = i \epsilon \sigma(v) = \sigma(i \epsilon v) = \sigma(v_\epsilon).
    \]
    Applying the quotient map $r$, which is real and $G$-invariant:
    \[
        \overline{r(v_\epsilon)} = r(\overline{v_\epsilon}) = r(\sigma(v_\epsilon)) = r(v_\epsilon).
    \]
    Thus, $r(v_\epsilon) \in (T_xX // G_x)(\mathbb{R})$.
    
    We claim that for $\epsilon \neq 0$, $r(v_\epsilon) \notin r(V(\mathbb{R}))$. Suppose for contradiction that $r(v_\epsilon) = r(u)$ for some $u \in V(\mathbb{R})$. Since $G$ is finite, the orbits are closed, so this implies $u = g \cdot v_\epsilon$ for some $g \in G_x$. 
    However, $u$ is real and $v_\epsilon$ is purely imaginary. The only vector that is both real and purely imaginary is $0$. Since $v \neq 0$ and $\epsilon \neq 0$, this is a contradiction.
    
    Thus, $r(v_\epsilon)$ is a sequence of real points in the quotient that are not images of real points in $X$, converging to $r(0)$. Therefore, $0 \in \partial r(V(\mathbb{R}))$.
    
    \vspace{1em}
    \textbf{Step 3: The Odd Case ($|G_x|$ is odd)}
    
    Assume $|G_x|$ is odd. Let $y \in (T_xX /\!/ G_x)(\mathbb{R})$. Since $r$ is surjective over $\mathbb{C}$, let $u \in T_xX$ be a preimage of $y$.
    
    Since $y$ is real, $r(\overline{u}) = \overline{r(u)} = \overline{y} = y = r(u)$. This implies that $\overline{u}$ and $u$ lie in the same $G_x$-orbit. Thus, $\overline{u} = gu$ for some $g \in G_x$.
    
    Applying complex conjugation (and noting $\overline{g}=g$ as $G$ acts by real automorphisms):
    \[ u = \overline{\overline{u}} = \overline{gu} = g \overline{u} = g(gu) = g^2 u. \]
    Thus, $u$ is fixed by $g^2$. Since $|G_x|$ is odd, the order of $g$, say $k$, is odd. Therefore, $g^k = \text{id}$. Therefore:
    \[ gu = g^k u = \text{id} \cdot u = u. \]
    
    Therefore, $\overline{u} = gu = u$, so $u \in V(\mathbb{R})$.
    
    This proves that the map $r|_{V(\mathbb{R})}: V(\mathbb{R}) \to (T_xX // G_x)(\mathbb{R})$ is surjective. Hence, $0$ is an interior point of $r(V(\mathbb{R}))$.
\end{proof}

\begin{remark}[Identification of Symmetry and Boundary]
\label{rem:symmetry_boundary_identification}
In the context of $S_n$ invariant polynomials and the $LJ_{13}$ potential the parity condition in Theorem \ref{thm:boundary_parity} allows us to effectively identify symmetric configurations with the boundary of the real image $\partial L$ in our primary settings of interest.

First, consider the action of the permutations group $S_n$ on $\mathbb{C}^n$. Since $S_n$ is a reflection group, any non-trivial stabilizer $G_x$ contains a transposition (an element of order 2). Consequently, $|G_x|$ is always even, and thus every symmetric configuration maps to the boundary $\partial L$.

Second, in broader physical contexts where odd-order stabilizers are theoretically possible (e.g., cyclic groups $C_3, C_5$), empirical evidence suggests they are statistically negligible. For instance, in the $LJ_{13}$ database, out of approximately 30,517 recorded critical points, only 3 possess a stabilizer of odd order (specifically order 3) distinct from the identity.

Therefore, throughout this paper, we will adopt the convention of using the terms \textbf{symmetric configuration} and \textbf{boundary point} interchangeably.
\end{remark}

\section{Metric Rarity in Vector Space Quotients}
\label{sec:metric_rarity}

\subsection{Metric Rarity in the Symmetric Group Quotient}

We begin by quantifying the measure of the real image in the canonical case of the symmetric group $S_n$ acting on the configuration space $\mathbb{R}^n$.
In this setting, the quotient map $\pi: \mathbb{C}^n \to \mathbb{C}^n/S_n \cong \mathbb{C}^n$ is realized by the elementary symmetric polynomials (Vieta's map):
\[
\pi(x_1, \dots, x_n) = (e_1(x), \dots, e_n(x)).
\]
We identify the target space $Y(\mathbb{R})$ with the space of monic real polynomials $P(t) = t^n - e_1 t^{n-1} + \dots + (-1)^n e_n$.
Consequently, the real image $L = \pi(\mathbb{R}^n)$ corresponds precisely to the set of coefficients for which \textbf{all roots are real}.

We now demonstrate that for large $n$, the subset $L$ of real-rooted polynomials has vanishing measure relative to the full coefficient space $Y(\mathbb{R})$.

To formalize this, we rely on the standard Gaussian model for random polynomials.
Our quotient space $Y(\mathbb{R})$ corresponds to the ensemble of \emph{monic} polynomials $P(t) = t^n + \sum_{k=1}^n \xi_k t^{n-k}$ with Gaussian coefficients.
The classical theory typically considers the \emph{Kac ensemble}, where the leading coefficient $\xi_0$ is also random.
Here, we proceed under the assumption that fixing a single coefficient (the leading term) does not alter the asymptotic decay rate of the probability of having all real roots.

\begin{assumption}[Ensemble Equivalence]
\label{ass:ensemble_equivalence}
We assume that the probability that a monic Gaussian polynomial has all real roots decays at the same super-exponential rate as in the Kac ensemble.
\end{assumption}

We now prove the main bound for the standard case.

\begin{theorem}[Super-Exponential Decay for Kac Polynomials]
\label{thm:rarity_kac}
Let $Q_n(t) = \sum_{k=0}^n \xi_k t^k$ be a random Kac polynomial with i.i.d.\ standard normal coefficients $\xi_k \sim \mathcal{N}(0,1)$.
The probability that all roots of $Q_n$ are real decays super-exponentially:
\begin{equation}
\limsup_{n \to \infty} \frac{1}{n^2} \log \mathbb{P}(\text{all roots of } Q_n \text{ are real}) < 0.
\end{equation}
\end{theorem}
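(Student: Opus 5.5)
The plan is to reduce the event ``all roots of $Q_n$ are real'' to a rigid inequality system on the coefficients via Newton's inequalities, and then show that i.i.d.\ Gaussian coefficients satisfy this system only with probability $e^{-cn^2}$.

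\textbf{Step 1: reduction via Newton's inequalities.} Almost surely all $\xi_k\neq 0$. If $Q_n$ has only real roots, then Newton's inequalities for the normalized coefficients $b_k=\xi_k/\binom{n}{k}$ give $b_k^2\ge b_{k-1}b_{k+1}$ for $1\le k\le n-1$. In particular $b_{k-1}b_{k+1}>0$ must not exceed $b_k^2$, and whenever $b_{k-1}b_{k+1}<0$ the inequality is automatic. I will use the stronger form: a real-rooted polynomial with all coefficients nonzero has $|b_k|$ log-concave. This follows from applying Newton to $Q_n(\pm t)$, or from the standard fact that real-rootedness forces $\xi_{k-1}\xi_{k+1}<0$ is impossible when $\xi_k\neq0$ is ``skipped''. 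Writing $\ell_k=\log|\xi_k|$ and $c_k=\log\binom nk$, we get that $\ell_k-c_k$ is concave in $k$. Since $c_k$ is itself concave, $\ell_k$ is concave. Keeping the correction term, the event $E_n=\{Q_n\text{ real-rooted}\}$ is contained in
\[
F_n=\bigl\{\ell_{k-1}-2\ell_k+\ell_{k+1}\le c_{k-1}-2c_k+c_{k+1}\le 0,\ 1\le k\le n-1\bigr\}.
\]

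\textbf{Step 2: splitting by range.} Let $\Delta_k=\ell_k-\ell_{k-1}$. On $F_n$ the increments are strictly decreasing, with
\[
\Delta_k-\Delta_{k+1}\ge \log\tfrac{(k+1)(n-k+1)}{k(n-k)}\ \ge\ \tfrac{1}{k}+\tfrac1{n-k}-O(k^{-2}).
\]
Summing twice, the sequence $\ell_k$ must drop from its maximum $M$ by at least $\asymp \sum_{j\le m}\sum_{i\le j}(1/i+1/(n-i))$ over $m$ steps. Concretely, for a positive proportion of indices (say $k$ in the outer quarters, measured from the argmax), $\ell_k\le M-c\,n$. I then bound $\mathbb P(F_n)$ on two events.

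Case (a): $M\ge \tfrac12\log n + C$. Then $\mathbb P(\max_k|\xi_k|\ge e^{M})\le (n+1)e^{-e^{2M}/2}$, which is far below $e^{-cn^2}$ once $M\gtrsim \log n$.

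Case (b): $M\le \log n+C$. Then $\gtrsim n$ coefficients satisfy $|\xi_k|\le e^{-c' n}$. Independently, each such event has probability $\lesssim e^{-c'n}$, so this costs $e^{-c''n^2}$. A union bound over the $O(n)$ possible argmax positions and over a discretization of $M$ costs only $e^{O(n\log n)}$, which is negligible against $e^{-cn^2}$.

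\textbf{Main obstacle.} The delicate point is Step 2(b): making the quadratic drop rigorous from the discrete curvature bound. The lower bound on second differences of $c_k$ is only of order $1/k+1/(n-k)$. Summed twice, this yields a drop of order $n$ (not $n^2$) at distance $\asymp n$ from the argmax, which is exactly enough: $n$ independent small-ball events of cost $e^{-cn}$ each give $e^{-cn^2}$. I would verify the summation carefully. If $\sum_{i\le j}1/i$ only gives a $\log$ gain near the argmax, I would restrict to indices at distance between $n/4$ and $n/2$ from it, where the curvature $\gtrsim 1/n$ accumulates to a drop $\gtrsim n$. If this direct route proved too lossy, the fallback is to invoke the known large-deviation estimates for the empirical root measure of Kac polynomials (Zeitouni--Zelditch type), under which concentrating all $n$ roots on $\mathbb R$ has rate function cost of order $n^2$. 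That yields the $\limsup$ bound directly.
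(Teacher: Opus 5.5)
Your primary route has a genuine gap at Step~1, and it cannot be repaired within a Newton-inequality framework.

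The ``stronger form'' you invoke is false. The polynomial $(t-1)(t+2)=t^2+t-2$ is real-rooted with all coefficients nonzero. Its normalized coefficients $b_0=-2$, $b_1=\tfrac12$, $b_2=1$ satisfy Newton ($b_1^2=\tfrac14\ge b_0b_2=-2$) but not $|b_1|^2\ge|b_0||b_2|$.

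Neither of your justifications helps. The substitution $t\mapsto -t$ sends $b_k$ to $(-1)^k b_k$ and leaves every product $b_{k-1}b_{k+1}$ unchanged, so it returns the same signed inequality. The standard fact about coefficients runs the other way: a vanishing interior coefficient $\xi_k=0$ forces $\xi_{k-1}\xi_{k+1}<0$. It does not forbid $\xi_{k-1}\xi_{k+1}<0$ when $\xi_k\neq0$. Hence $E_n\not\subseteq F_n$.

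Worse, the correct signed inequalities are far too weak to give speed $n^2$. Consider the event $\{\operatorname{sgn}\xi_{k+2}=-\operatorname{sgn}\xi_k \text{ for } 0\le k\le n-2\}$, for example signs $+,+,-,-,+,+,\dots$. On it every product $b_{k-1}b_{k+1}$ is negative, so all Newton inequalities hold automatically, whatever the magnitudes. Since the signs of i.i.d.\ centered Gaussians are i.i.d.\ fair, this event has probability $2^{-(n-1)}$. Any bound obtained through $E_n\subseteq\{\text{all Newton inequalities hold}\}$ is therefore capped at $e^{-O(n)}$. The $n^2$ speed comes from the global interaction of all $n$ zeros, which local three-term coefficient inequalities do not capture.

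Step~2 is not where the difficulty lies. You have
$c_{k-1}-2c_k+c_{k+1}=-\log\bigl(1+\tfrac1k\bigr)-\log\bigl(1+\tfrac1{n-k}\bigr)\le-\tfrac2n$
uniformly in $k$. So log-concavity of $|b_k|$, if it held, would directly give $\ell_{m\pm j}\le M-j(j-1)/n$. One small fix there: Case (a) must start at $M\ge\log n+C$, not $\tfrac12\log n+C$, to yield $e^{-cn^2}$.

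What actually proves the theorem is your fallback, which is precisely the paper's argument:
\begin{enumerate}
\item Butez's extension of Zeitouni--Zelditch gives an LDP for the empirical zero measure of Kac polynomials at speed $n^2$, with a good rate function $\tilde I$ whose unique zero is $\nu_{S^1}$.
\item The set $A_{\mathbb R}=\{\mu:\operatorname{supp}\mu\subseteq\mathbb R\}$ is weakly closed by the portmanteau theorem.
\item Goodness makes $\inf_{A_{\mathbb R}}\tilde I$ attained, hence strictly positive because $\nu_{S^1}\notin A_{\mathbb R}$.
\item The LDP upper bound then gives $\limsup n^{-2}\log\mathbb P(\mu_n\in A_{\mathbb R})\le-\inf_{A_{\mathbb R}}\tilde I<0$.
\end{enumerate}
Your phrase ``rate function cost of order $n^2$'' should be replaced by exactly this positivity statement.
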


\begin{proof}
Let $\mu_n = \frac{1}{n} \sum_{z : Q_n(z)=0} \delta_{z}$ denote the empirical measure of the roots of $Q_n$.
We invoke the Large Deviation Principle (LDP) established by Butez \cite[Theorem 1.5]{Butez2016} (extending Zeitouni--Zelditch \cite{ZZ2010}). The sequence of random measures $(\mu_n)$ satisfies an LDP in the space of probability measures $\mathcal{M}_1(\mathbb{C})$ with speed $\beta_n = n^2$ and a good rate function $\tilde{I}$.

The rate function $\tilde{I}(\mu)$ possesses a unique minimizer: the uniform measure on the unit circle $\nu_{S^1}$ (see \cite{Butez2016}, Section 1).
The event that all roots are real corresponds to the set of measures supported on the real line:
\[
A_{\mathbb{R}} = \{ \mu \in \mathcal{M}_1(\mathbb{C}) : \text{supp}(\mu) \subseteq \mathbb{R} \}.
\]
Since the equilibrium measure $\nu_{S^1}$ is supported on the circle and not on the line, $\nu_{S^1} \notin A_{\mathbb{R}}$. Since $A_{\mathbb{R}}$ is a closed set in the weak topology and does not contain the unique minimizer of the good rate function $\tilde{I}$, the infimum of the rate function over this set is strictly positive:
\[
\inf_{\mu \in A_{\mathbb{R}}} \tilde{I}(\mu) =: C > 0.
\]
The LDP upper bound then implies:
\[
\limsup_{n \to \infty} \frac{1}{n^2} \log \mathbb{P}(\mu_n \in A_{\mathbb{R}}) \le -\inf_{\mu \in A_{\mathbb{R}}} \tilde{I}(\mu) = -C.
\]
\end{proof}

We apply this geometric decay to the statistical analysis of critical points.
Let $f$ be an $S_n$-invariant polynomial that descends to a polynomial $\tilde{f}: Y(\mathbb{R}) \to \mathbb{R}$ via the relation $f = \tilde{f} \circ \pi$.
If $\tilde{f}$ is a polynomial of degree $n$, then by Bézout's Theorem, its gradient system $\nabla \tilde{f} = 0$ possesses at most $n^n$ complex critical points.
Recall from Theorem \ref{thm:surjectivity_diff} that an asymmetric critical point of $f$ corresponds uniquely to a critical point of $\tilde{f}$ lying in the smooth locus of the real image $L^\circ$.
The problem thus reduces to estimating the probability that any of these $n^n$ candidates lies in the domain $L$.

\begin{corollary}[Vanishing of Asymmetric Critical Points]
\label{cor:rarity_monic}
    Under Assumption \ref{ass:ensemble_equivalence}, the probability that a generic $S_n$-invariant function $f = \tilde{f} \circ \pi$, where $\tilde{f}$ is of degree $n$, possesses any asymmetric critical point tends to zero as $n \to \infty$.
\end{corollary}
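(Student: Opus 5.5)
The approach is a union bound over the critical points of $\tilde f$, combined with the super-exponential smallness of $L$. By Proposition~\ref{thm:surjectivity_diff}, an asymmetric real critical point $x$ of $f$ (smooth point, $G_x$ trivial) forces $d\tilde f_{\pi(x)}=0$ with $\pi(x)\in L^\circ$. So the event ``$f$ has an asymmetric critical point'' is contained in the event that some real zero of $\nabla\tilde f$ lies in $L$. Fixing $\tilde f$ of degree $n$ and generic coefficients, Bézout gives at most $(n-1)^n\le n^n$ isolated complex critical points $c_1,\dots,c_N$, and hence
\[
\mathbb{P}(\exists\text{ asymmetric critical point})\le \sum_{j=1}^{N}\mathbb{P}(c_j\in L)\le n^n\max_j \mathbb{P}(c_j\in L).
\]

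The key modelling step is to make ``generic'' precise so that each critical point $c_j\in Y(\mathbb{R})\cong\mathbb{R}^n$, viewed as a random monic polynomial $P_{c_j}(t)$, has a law comparable to the monic Gaussian ensemble of Assumption~\ref{ass:ensemble_equivalence}. I would state this as a hypothesis on the random model: the critical points are distributed with a density on $Y(\mathbb{R})$ bounded by $e^{o(n^2)}$ times the standard Gaussian density. One concrete route is to consider the random translate $\tilde f(y-\eta)$ with $\eta$ standard Gaussian and independent, so that each $c_j$ is a Gaussian shift. Under this hypothesis, Assumption~\ref{ass:ensemble_equivalence} together with Theorem~\ref{thm:rarity_kac} gives
\[
\mathbb{P}(c_j\in L)\le e^{-Cn^2+o(n^2)}\quad\text{for some } C>0.
\]

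Combining the two bounds yields
\[
\mathbb{P}\le \exp\bigl(n\log n-Cn^2+o(n^2)\bigr)\to 0,
\]
since $n\log n=o(n^2)$. Genericity also removes the degenerate cases: positive-dimensional critical loci and non-smooth points form a measure-zero set of coefficients, so Bézout's count applies.

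The main obstacle is the middle step, namely justifying that critical points of a random $\tilde f$ are distributed like Gaussian-monic coefficient vectors. They are not independent, and their law depends on the ensemble chosen for $\tilde f$. My first attempt would be the Gaussian-shift construction above, since it makes the comparison exact. If that is too artificial, the fallback is a density bound via a Kac--Rice formula for $\nabla\tilde f$ restricted to $L$: integrate the expected critical-point density over $L$ and bound the integrand by a polynomially growing factor times the Gaussian weight, so that $\mathbb{E}\,\#\{\text{critical points in }L\}\le e^{O(n\log n)}\,\mu_{\mathrm{Gauss}}(L)$.
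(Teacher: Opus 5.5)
Your argument is the paper's. You bound the candidate critical points of $\tilde f$ by B\'ezout (your $(n-1)^n$ is slightly sharper than the paper's $n^n$), give each candidate probability $\approx e^{-Cn^2}$ of lying in $L$, and finish with a union bound. The paper writes this as $\mathbb{E}[N_{\mathrm{crit}}]\le n^n\,\mathbb{P}(y\in L)$ followed by Markov, which is the same estimate. You are also right that everything hinges on the middle step. The paper's proof treats each critical point $y$ as a standard (monic) Gaussian point of $Y(\mathbb{R})$ without comment. Your explicit density hypothesis is therefore exactly what its argument uses silently, and conditional on that hypothesis your proof is complete.

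The two routes you propose for verifying the hypothesis do not work as stated.

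\textbf{The Gaussian shift.} It does not make the comparison exact. The shifted critical points are $c_j+\eta\sim\mathcal{N}(c_j,I)$, whose density relative to $\mathcal{N}(0,I)$ is $\exp(\langle c_j,y\rangle-|c_j|^2/2)$, which is unbounded in $y$. No bound uniform in $c_j$ exists either. If $c_j$ is the coefficient vector of $\prod_{k=1}^n(t-kM)$ with $M$ large, an $O(1)$ perturbation of the coefficients keeps all roots real, so $\mathbb{P}(c_j+\eta\in L)$ is close to $1$. What the shift does give, by Cauchy--Schwarz, is $\mathbb{P}(c+\eta\in L)\le e^{|c|^2/2}\,\mathbb{P}(\eta\in L)^{1/2}$. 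This helps only if every base critical point satisfies $|c_j|\le\delta n$ with $\delta$ small. That fails for natural base ensembles, whose critical-point intensity decays only polynomially in $|y|$.

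\textbf{The Kac--Rice fallback.} The same heavy tails defeat it. For KSS-type $\tilde f$, the real critical-point intensity cannot be dominated by $e^{O(n\log n)}$ times a Gaussian weight at large $|y|$. The natural reference density is instead the multivariate Cauchy density $\propto(1+|y|^2)^{-(n+1)/2}$. This is exactly the law of the normalized coefficient vector $(\xi_{n-1}/\xi_n,\dots,\xi_0/\xi_n)$ of a Kac polynomial, up to the harmless signs $(-1)^k$. Its mass on $L$ is therefore precisely the probability controlled in Theorem~\ref{thm:rarity_kac}. An intensity bound of the form $e^{o(n^2)}$ times this density would prove the corollary without invoking Assumption~\ref{ass:ensemble_equivalence} at all.
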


\begin{proof}
    Let $N_{\text{crit}}$ denote the number of asymmetric critical points of $f$. This is equal to the number of critical points of $\tilde{f}$ that fall into the real image $L$.
    Combining the bound on the number of candidates ($n^n$) with the volume decay from Theorem \ref{thm:rarity_kac} ($\sim e^{-C n^2}$), the expected number of such points satisfies:
    \[
    \mathbb{E}[N_{\text{crit}}] \le n^n \cdot \mathbb{P}(y \in L) \approx e^{n \log n} \cdot e^{-C n^2} = e^{n \log n - C n^2}.
    \]
    Since the quadratic decay $n^2$ dominates the growth $n \log n$, the expected number tends to zero.
    By Markov's inequality, $\mathbb{P}(N_{\text{crit}} \ge 1) \le \mathbb{E}[N_{\text{crit}}]$, and thus the probability of observing even a single asymmetric critical point vanishes.
\end{proof}

\subsection{Generalization to Finite Group Actions}

The Gaussian analysis above relies on the specific identification of the quotient space with polynomial coefficients. 
To extend this geometric intuition to general linear representations—and subsequently to the physical models discussed in the introduction—we require a metric framework intrinsic to the variety $X$.
In this section, we define a natural measure on the quotient space induced by the $G$-invariant metric on $X$, allowing us to quantify the rarity of the real image for any finite group.

% --- Case 2: Geometric Measure ---
\begin{definition}[Induced Geometric Measure on $Y(\mathbb{R})$]
\label{def:algebraic_involutions_measure}
    Let $X$ be $n$ dimensional faithful complex $G$-representation defined over $\mathbb{R}$. Let $h$ be a $G$-invariant Hermitian metric on $X$ with an underlying real metric $g_h$. We define the measure $\operatorname{vol}_{Y(\R)}$ on $Y$ via the pullback to the covering space:
    \[
    \operatorname{vol}_{Y(\R)}(U) := \frac{1}{|G|} \mathcal{H}^n_{(X, g_h)}\left(\pi^{-1}(U)\right) \quad \text{for any measurable } U \subset Y(\R),
    \]
    where $\mathcal{H}^n$ is the $n$-dimensional Hausdorff measure.
\end{definition}

\begin{theorem}[Real Image Volume Ratio]
    \label{thm:rarity_real_image_involutions}
    Let $X$ be a faithful complex $G$-representation defined over $\mathbb{R}$. Let $h$ be a $G$-invariant Hermitian metric on $X$ with an underlying real metric $g_h$. 
    Let $B_{Y(\R)}(r) \subset Y(\mathbb{R})$ be a metric ball of radius $r > 0$ centered at the origin $\pi(0)$. The relative volume of the real image is constant and given by:
    \[
    \frac{\operatorname{vol}_{Y(\R)}\left(L \cap B_{Y(\R)}(r)\right)}{\operatorname{vol}_{Y(\R)}\left(B_{Y(\R)}(r)\right)} = \frac{1}{\#\mathrm{Inv}(G)},
    \]
    where $\#\mathrm{Inv}(G) = \#\{\sigma \in G \mid \sigma^2 = \mathrm{id}\}$.
\end{theorem}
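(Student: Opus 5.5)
Let $V=X(\mathbb{C})$ and let $\tau$ denote complex conjugation on $V$. The plan is to decompose $\pi^{-1}(Y(\mathbb{R}))$ into real forms indexed by involutions, compute the Hausdorff measure of each, and compare with the piece over $L$.

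\textbf{Step 1: the twisted real loci.} A point $y\in Y(\mathbb{R})$ satisfies $\bar y=y$. Hence for $u\in\pi^{-1}(y)$ we have $\pi(\tau u)=\pi(u)$, and since fibres of $\pi$ are $G$-orbits, $\tau u=\sigma u$ for some $\sigma\in G$. As in Step~3 of the proof of Theorem~\ref{thm:boundary_parity}, conjugating again gives $u=\sigma^2u$. For $u$ with trivial stabilizer this forces $\sigma^2=\mathrm{id}$, and then $\sigma$ is uniquely determined by $u$. So, up to the measure-zero set of points with nontrivial stabilizer (a finite union of proper linear subspaces, which is null for every relevant Hausdorff measure),
\[
\pi^{-1}(Y(\mathbb{R}))=\bigsqcup_{\sigma\in\mathrm{Inv}(G)} V_\sigma,\qquad V_\sigma:=\{u\in V:\ \tau u=\sigma u\}.
\]
Each $V_\sigma$ is the fixed set of the antilinear involution $\sigma^{-1}\tau=\sigma\tau$. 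Here $\sigma\tau$ is an involution because $G$ acts by real automorphisms, so $\sigma$ commutes with $\tau$. It follows that $V_\sigma$ is a real form of $V$: a real subspace of real dimension $n$ with $V=V_\sigma\oplus iV_\sigma$. Concretely, $V_\sigma=V(\mathbb{R})_+^{\sigma}\oplus i\,V(\mathbb{R})_-^{\sigma}$, using the eigenspace decomposition of $\sigma$ on $V(\mathbb{R})$. In particular $V_{\mathrm{id}}=V(\mathbb{R})$, and this is the only piece mapping into $L$; it maps onto $L$. I also check that $\pi^{-1}(L)\setminus V(\mathbb{R})$ is null: a point $u\in V_\sigma$ with $\sigma\neq\mathrm{id}$ and $\pi(u)\in L$ satisfies $u=gw$ with $w$ real, so $\tau u=(g\tau g^{-1})\ldots$. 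Carried through, this shows $u$ has a nontrivial stabilizer (namely $g^{-1}\sigma g\ne \mathrm{id}$ fixing $w$ up to the identification), hence lies in the null set.

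\textbf{Step 2: isometry of the pieces.} The real metric $g_h=\mathrm{Re}\,h$ is $G$-invariant. Because $h$ is $\tau$-compatible (a real structure, as implicit in Definition~\ref{def:algebraic_involutions_measure}), $\tau$ is an isometry, and multiplication by $i$ is always an isometry of $g_h$. Using the eigenspace decomposition $V(\mathbb{R})=V_+\oplus V_-$ of $\sigma$, which is $g_h$-orthogonal since $\sigma$ is an isometric involution, the real-linear map
\[
\phi_\sigma=\mathrm{id}_{V_+}\oplus i\cdot\mathrm{id}_{V_-}:V(\mathbb{R})\to V_\sigma
\]
is an isometry. Moreover $g_h(v_+,iv_-)=\mathrm{Re}\,h(v_+,iv_-)$ vanishes for real vectors when $h$ is real on $V(\mathbb{R})$, so the two summands of $V_\sigma$ are orthogonal too. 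Next, $|u|^2$ is a $G$-invariant quantity. I interpret the ball $B_{Y(\mathbb{R})}(r)$ as the image of $\{|u|<r\}$, i.e.\ the quotient-metric ball around $\pi(0)$. Then
\[
\mathcal H^n\bigl(V_\sigma\cap\pi^{-1}B(r)\bigr)=\mathcal H^n\bigl(V(\mathbb{R})\cap\{|u|<r\}\bigr)=\omega_n r^n
\]
for every $\sigma$.

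\textbf{Step 3: conclusion.} By Definition~\ref{def:algebraic_involutions_measure},
\[
\mathrm{vol}(B(r))=\tfrac1{|G|}\,\#\mathrm{Inv}(G)\,\omega_nr^n,\qquad \mathrm{vol}(L\cap B(r))=\tfrac1{|G|}\,\omega_nr^n,
\]
and the ratio is $1/\#\mathrm{Inv}(G)$, independent of $r$.

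\textbf{Main obstacle.} The main obstacle is Step~2's metric bookkeeping. First, I must make precise that the ball in $Y(\mathbb{R})$ corresponds to a norm ball upstairs. This uses the quotient (orbit-distance) metric, whose balls about $\pi(0)$ are exactly the images of $\{|u|<r\}$. Second, I must ensure $h$ is compatible with the real structure so that $\phi_\sigma$ is isometric. If $h$ is merely $G$-invariant Hermitian, I would first average it with $\overline{h\circ\tau}$, or note that the statement implicitly assumes this; I would state that assumption explicitly.
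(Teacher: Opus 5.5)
Your proposal is correct in substance and follows the paper's proof. Both arguments decompose $\pi^{-1}(Y(\R))$ into the twisted real forms $V_\sigma$, $\sigma\in\mathrm{Inv}(G)$, and both transport ball volumes from $V(\R)$ to each $V_\sigma$ by an isometry. Your $\phi_\sigma$ is the restriction of the unitary $P_++iP_-$, with $P_\pm=\tfrac12(1\pm\sigma)$, so it is an explicit choice of the paper's $T_\sigma$. Both also identify $\pi^{-1}(B_{Y(\R)}(r))$ with a norm ball; the paper does this tacitly. Two points need tightening.

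\textbf{The compatibility assumption is not needed.} The compatibility of $h$ with $\tau$, which you flag as the main obstacle, can be dropped. Do not average $h$ either: that changes $g_h$, hence the measure in Definition~\ref{def:algebraic_involutions_measure}, so you would be proving a statement about a different $\operatorname{vol}_{Y(\R)}$. Since $\sigma$ is $\C$-linear and $g_h$-isometric, for $v_\pm\in V_\pm$ you get $g_h(v_+,iv_-)=g_h(\sigma v_+,\sigma(iv_-))=g_h(v_+,-iv_-)=-g_h(v_+,iv_-)$. Hence $V_+\perp iV_-$ for every $G$-invariant $h$, by the same trick you used for $V_+\perp V_-$.

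\textbf{The null-set claim is misstated.} The non-free locus $\bigcup_{g\neq 1}V^g$ is \emph{not} $\mathcal{H}^n$-null in $V$ once $n\ge 2$: it contains complex subspaces of real dimension up to $2n-2$. What you need is that its intersection with $\pi^{-1}(Y(\R))$ is null. This does not follow from your Step 1, because non-free points of $\pi^{-1}(Y(\R))$ need not lie in any $V_\sigma$. It does follow from the decomposition $\pi^{-1}(Y(\R))=\bigcup_{\sigma\in G}W_\sigma$ with $W_\sigma=\{u:\tau u=\sigma u\}$, taken over all $\sigma\in G$, not only involutions.
\begin{itemize}
\item Each $W_\sigma$ lies in $V^{\sigma^2}$ and is the fixed set of the antilinear involution $\sigma\tau$ of $V^{\sigma^2}$. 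Hence $\dim_\R W_\sigma=\dim_\C V^{\sigma^2}$, which is $<n$ unless $\sigma^2=\mathrm{id}$.
\item For an involution $\sigma$ and $g\neq 1$, $V^g\cap V_\sigma$ is a proper subspace of $V_\sigma$. Otherwise $V^g\supseteq V_\sigma+iV_\sigma=V$, contradicting faithfulness.
\end{itemize}
The paper passes over this point just as quickly.

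Finally, your garbled sketch for $\pi^{-1}(L)$ can be replaced by the exact identity $\pi^{-1}(L)=G\cdot V(\R)=V(\R)$. This holds because fibres of $\pi$ are orbits and $G$ preserves $V(\R)$.
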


\begin{proof}
The proof follows a geometric argument communicated by W.~Sawin \cite{sawin_private}.
  A point $y\in Y(\R)$ is real if and only if $y=\overline y$. Lifting this to $X$, if $\pi(x)=y$ then $\overline x=\sigma x$ for some $\sigma\in G$ satisfying $\sigma^2=\mathrm{id}$. Define the twisted real subspaces:
\[
  U_\sigma:=\{x\in X:\ \overline x=\sigma x\}=\mathrm{Fix}(\sigma\circ\mathrm{conj}).
\]
Recall that $U_{\mathrm{id}} = X(\R)$.
Since the union is finite, up to sets of $\mathcal{H}^n$-measure zero (intersections of lower dimension), we have:
\[
  \pi^{-1}(Y(\R)) \approx \bigsqcup_{\sigma\in\mathrm{Inv}(G)} U_\sigma.
\]
Since $h$ is $G$-invariant, $U_\sigma$ is a real vector space of dimension $n$, and $h$ restricts to an inner product on it. Therefore, there exists a unitary transformation $T_\sigma \in U(X)$ such that $T_\sigma(U_{\mathrm{id}})=U_\sigma$. 
Consequently, $T_\sigma$ is a global isometry of the metric space defined by $h$. Thus, it preserves the Hausdorff measure $\mathcal{H}^n$, and therefore:
\[
  \mathcal{H}^n(U_\sigma\cap B_{X}(r)) = \mathcal{H}^n(T_\sigma(U_{\mathrm{id}}\cap B_{X}(r))) = \mathcal{H}^n(U_{\mathrm{id}}\cap B_{X}(r)) =: C_r.
\]
The rest of the proof follows by summation:
\[
  \vol_{Y(\R)}(B_{Y(\R)}(r))=\frac{1}{|G|}\sum_{\sigma\in\mathrm{Inv}(G)}C_r=\frac{\#\mathrm{Inv}(G)}{|G|}\,C_r.
\]
For the numerator, on the principal stratum the conjugation argument shows $U_\sigma\cap \pi^{-1}(\pi(X(\R)))=\varnothing$ when $\sigma\neq\mathrm{id}$, leaving only $U_{\mathrm{id}}$. Thus
\[
  \vol_{Y(\R)}\big(\pi(X(\mathbb{R}))\cap B_{Y(\R)}(r)\big)=\frac{1}{|G|}\,C_r.
\]
Taking the quotient yields $1/\#\mathrm{Inv}(G)$, uniformly in $r>0$.
\end{proof}

Applying this general geometric formula back to the case of the symmetric group allows us to compare the intrinsic geometric rarity against the probabilistic Gaussian rarity derived in Theorem \ref{thm:rarity_kac}.

\begin{corollary}[Specialization to $S_n$]
  \label{cor:Inv_Sn_decay}
For $G=S_n$ acting on $X(\mathbb{R})=\mathbb{R}^n$,
\[
\frac{\vol_{Y(\R)}\big(\pi(\mathbb{R}^n)\cap B_{Y(\R)}(r)\big)}{\vol_{Y(\R)}\big(B_{Y(\R)}(r)\big)}
=\frac{1}{\,\#\mathrm{Inv}(S_n)\,},\qquad
\#\mathrm{Inv}(S_n)=\sum_{k=0}^{\lfloor n/2\rfloor}\frac{n!}{2^k\,k!\,(n-2k)!}.
\]
Asymptotically, we have $\#\mathrm{Inv}(S_n) = O(n^{n/2})$.
Consequently, there exists a constant $C$ such that 
\[
\frac{1}{\,\#\mathrm{Inv}(S_n)\,} = O(e^{-Cn\log n}).
\]
\end{corollary}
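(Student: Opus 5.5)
The corollary has three parts, and I would prove them in order. The first is the volume ratio, which comes straight from Theorem~\ref{thm:rarity_real_image_involutions}. The second is the closed-form count of involutions in $S_n$. The third is the asymptotic estimate.

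For the ratio, note that the permutation representation of $S_n$ on $\mathbb{C}^n$ is faithful and defined over $\mathbb{R}$. The standard Hermitian inner product is $S_n$-invariant, since permutation matrices are unitary. So the hypotheses of Theorem~\ref{thm:rarity_real_image_involutions} hold, and the ratio equals $1/\#\mathrm{Inv}(S_n)$ for every $r>0$.

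For the count, recall that $\mathrm{Inv}(S_n)$ is defined as $\{\sigma:\sigma^2=\mathrm{id}\}$, so it includes the identity. Each such $\sigma$ is a product of $k$ disjoint transpositions with $0\le k\le\lfloor n/2\rfloor$. To count those with exactly $k$ transpositions, first choose the $2k$ moved points in $\binom{n}{2k}$ ways. Then pair them up in $(2k)!/(2^k k!)$ ways. The product is $n!/(2^k k!(n-2k)!)$, and summing over $k$ gives the stated formula.

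For the asymptotics, I would bound the sum above and below by elementary estimates. The upper bound can be proved in either of two ways:
\begin{itemize}
\item Each term is at most $n^{2k}/(2^k k!)\le n^{n}$, and there are at most $n$ terms. This gives $\log\#\mathrm{Inv}(S_n)=O(n\log n)$, though with a constant $1$ rather than $1/2$.
\item Alternatively, use the recurrence $a_n=a_{n-1}+(n-1)a_{n-2}$. By induction this gives $a_n\le \sqrt{n!}\cdot c^n$, which yields the sharper $O(n^{n/2})$ up to exponential factors.
\end{itemize}
The decay statement $1/\#\mathrm{Inv}(S_n)=O(e^{-Cn\log n})$ needs the lower bound. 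Take the single term $k=\lfloor n/2\rfloor$, which is $n!/(2^{k}k!)$ (times $1$ when $n$ is even). Apply Stirling's formula: $\log n! = n\log n - n + O(\log n)$ and $\log k! = \tfrac n2\log n + O(n)$. So this term is $\exp(\tfrac n2\log n - O(n))$. For large $n$ it is at least $e^{\frac{n}{4}\log n}$, which gives the claim with $C=1/4$.

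I expect the main obstacle to be the mismatch in the statement itself. An upper bound $O(n^{n/2})$ on $\#\mathrm{Inv}(S_n)$ does not by itself imply decay of its reciprocal. The substantive content is the lower bound, so I would state explicitly that the decay follows from $\#\mathrm{Inv}(S_n)\ge e^{\frac{n}{2}\log n - O(n)}$. The upper bound only shows that the rate $n\log n$ is sharp.
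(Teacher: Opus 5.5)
Your proposal is correct and follows the route the paper intends (the paper gives no separate proof, only the specialization). You apply Theorem~\ref{thm:rarity_real_image_involutions} to the permutation representation with the standard $S_n$-invariant Hermitian metric, count involutions by choosing and pairing the $2k$ moved points, and then estimate the growth of the count. You are also right that the stated upper bound $\#\mathrm{Inv}(S_n)=O(n^{n/2})$ alone does not yield the ``Consequently'' decay; what is needed is a lower bound $\#\mathrm{Inv}(S_n)\ge e^{(\frac12-o(1))\,n\log n}$, and your single-term Stirling estimate at $k=\lfloor n/2\rfloor$ supplies exactly that.
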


\begin{remark}
\label{rem:compare_measures}
Comparison with the Gaussian case validates the use of our induced geometric measure.
The Gaussian measure, which represents a standard "unbiased" distribution on the coefficient space, under Assumption \ref{ass:ensemble_equivalence} dictates an extremely rapid super-exponential decay for the volume of the real image ($\sim e^{-Cn^2}$).
In contrast, our geometric measure yields a slower decay ($\sim e^{-Cn \log n}$).
This suggests that the geometric measure does not artificially suppress the size of the real image; in fact, it attributes to it a relative volume strictly \textbf{larger} than the standard probabilistic baseline.
\end{remark}

\subsection{Iteration Along Fixed-Point Strata}
\label{subsec:iteration_strata}

The geometric rarity established in Theorem \ref{thm:rarity_real_image_involutions} is not limited to the global quotient. The logic extends recursively to the lattice of fixed-point subspaces. To quantify the rarity of symmetric critical points, we must treat each stratum $X^H$ as a quotient space in its own right, governed by the effective action of the normalizer.

\begin{definition}[Effective Action and Stratum Quotient]
\label{def:normalizer_effective}
For a subgroup $H \subseteq G$, the normalizer $N_G(H)$ acts on the fixed-point subspace $X^H$. Since $H$ acts trivially on this subspace, the action factors through the \textbf{effective group}:
\[
\overline{G} := N_G(H)/H.
\]
Consequently, the geometry of the stratum is captured by the local quotient map $\pi\mid_{X^H}$ onto the \textbf{stratum quotient} $Y^{(H)}$:
\[
\pi: X(\mathbb{R})^H \longrightarrow Y^{(H)}(\mathbb{R}) := X(\mathbb{R})^H \sslash \overline{G}.
\]
\end{definition}

We generalize the metric setup by restricting the $G$-invariant Hermitian metric $h$ to the subspace $X^H$ and defining the induced measure $\operatorname{vol}_{Y^{(H)}(\R)}$ on the stratum quotient analogously to Definition \ref{def:algebraic_involutions_measure}, replacing $G$ with $\overline{G}$ and $X$ with $X^H$.

\begin{lemma}[Relative Volume of Real Image in Strata]
\label{lem:stratum_volume}
Let $L_H = \pi(X(\mathbb{R})^H)$ be the real image within the quotient of the $H$-stratum. For any ball $B_{Y^{(H)}(\R)}(r)$ of radius $r > 0$ in the stratum quotient, the relative volume is given by:
\[
\frac{\operatorname{vol}_{Y^{(H)}(\R)}\left(L_H \cap B_{Y^{(H)}(\R)}(r)\right)}{\operatorname{vol}_{Y^{(H)}(\R)}\left(B_{Y^{(H)}(\R)}(r)\right)} = \frac{1}{\#\mathrm{Inv}(\overline{G})},
\]
where $\#\mathrm{Inv}(\overline{G})$ is the number of involutions in the effective group $\overline{G}$.
\end{lemma}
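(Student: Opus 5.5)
The plan is to reduce Lemma~\ref{lem:stratum_volume} directly to Theorem~\ref{thm:rarity_real_image_involutions}, applied to the effective group $\overline{G} = N_G(H)/H$ acting on the fixed-point space $X^H$. Three hypotheses of that theorem need checking.

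\textbf{Step 1: $X^H$ is a complex representation defined over $\mathbb{R}$.} Since $G$ acts linearly by real automorphisms, $X^H=\bigcap_{h\in H}\ker(h-\mathrm{id})$ is a complex linear subspace. Each $h$ commutes with complex conjugation, so this subspace is stable under conjugation. Hence $X^H$ is defined over $\mathbb{R}$, with real points $X(\mathbb{R})^H$, and $\dim_{\mathbb{R}} X(\mathbb{R})^H=\dim_{\mathbb{C}} X^H=:m$.

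\textbf{Step 2: $\overline{G}$ acts faithfully and isometrically.} For $g\in N_G(H)$ and $h\in H$, the normality relation $hg=gh'$ with $h'\in H$ gives $h(gx)=g(h'x)=gx$ for $x\in X^H$, so $N_G(H)$ preserves $X^H$. The action factors through $N_G(H)/H$ because $H$ acts trivially on $X^H$. The restriction $h|_{X^H}$ is a $\overline{G}$-invariant Hermitian metric, and the measure $\operatorname{vol}_{Y^{(H)}(\R)}$ is the measure of Definition~\ref{def:algebraic_involutions_measure} for this data.

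The faithfulness of this action is the main obstacle. The kernel of $N_G(H)\to \mathrm{GL}(X^H)$ is $N_G(H)\cap G_{(X^H)}$, where $G_{(X^H)}$ is the pointwise stabilizer of $X^H$. This kernel contains $H$ but can be strictly larger: for example, if $H$ fixes the same subspace as some larger subgroup $K\supsetneq H$, then $X^H=X^K$. I would handle this in one of two ways. The first option is to restrict to \emph{isotropy subgroups} $H$, namely those with $H=G_x$ for some $x$. This is the relevant case for strata of stabilizers, and for it $G_{(X^H)}=H$, so the kernel is exactly $H$. The second option is to redefine $\overline{G}$ as the image of $N_G(H)$ in $\mathrm{GL}(X^H)$, since only the faithful quotient matters in Theorem~\ref{thm:rarity_real_image_involutions}. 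I would state the lemma under the first convention, noting that a general $H$ can always be replaced by the isotropy subgroup $G_{(X^H)}$ without changing $X^H$.

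\textbf{Step 3: Apply the theorem.} With $X^H$ playing the role of $X$ and $\overline{G}$ the role of $G$, Theorem~\ref{thm:rarity_real_image_involutions} applies verbatim. The twisted real forms $U_{\bar\sigma}=\{x\in X^H:\bar x=\bar\sigma x\}$, indexed by $\bar\sigma\in\mathrm{Inv}(\overline{G})$, are all isometric to $X(\mathbb{R})^H$ via unitaries of $(X^H,h)$, and together they cover $\pi^{-1}(Y^{(H)}(\mathbb{R}))$ up to null sets. Only $U_{\mathrm{id}}$ maps into $L_H$ on the principal stratum. The ratio is therefore $1/\#\mathrm{Inv}(\overline{G})$, independent of $r$.

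One further point needs a remark: here $L_H$ and the ball are taken in the quotient $X^H\sslash\overline{G}$ of Definition~\ref{def:normalizer_effective}, not in the image of $X^H$ inside $Y$. These two quotients differ in general, because $X^H\sslash \overline{G}\to Y$ is only finite onto its image, and the lemma is a statement about the former.
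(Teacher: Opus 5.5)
Your proposal is correct and follows the paper's route. The paper's proof is exactly the one-line reduction to Theorem~\ref{thm:rarity_real_image_involutions} for $\overline{G}$ acting on $X^H$, and your Steps 1--3 supply the hypothesis checks it omits. Your faithfulness caveat is a genuine correction rather than a matter of convention: the paper simply asserts that $N_G(H)/H$ acts faithfully on $X^H$, and the stated formula fails without this. For example, take $G=C_4$ acting on $\mathbb{R}^2\oplus\mathbb{R}$ by a quarter-turn on the plane and trivially on the line, with $H=\{1,g^2\}$; then $L_H=Y^{(H)}(\mathbb{R})$, so the ratio is $1$ while $1/\#\mathrm{Inv}(\overline{G})=1/2$. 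So your restriction to isotropy subgroups, or replacing $\overline{G}$ by its image in $\mathrm{GL}(X^H)$, is actually needed.
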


\begin{proof}
The proof is identical to that of Theorem \ref{thm:rarity_real_image_involutions}, applied to the faithful action of the finite group $\overline{G}$ on the variety $X^H$. The measure $\operatorname{vol}_{Y^{(H)}(\R)}$ is defined via the pullback to $X^H$, and the decomposition into twisted real subspaces follows the same algebraic structure with respect to the group $\overline{G}$.
\end{proof}

\begin{remark}[Monotonicity and the Emergence of High Symmetry]
\label{rem:monotonicity_strata}
This result establishes a monotonicity of "probability" across strata. 
Typically, as the stabilizer $H$ increases in size, the effective group $\overline{G} = N_G(H)/H$ becomes smaller. Consequently, the number of involutions $\#\mathrm{Inv}(\overline{G})$ decreases.
Since the relative volume of the real image is proportional to $1/\#\mathrm{Inv}(\overline{G})$, the real image becomes less rare in strata associated with higher symmetries.
This provides a geometric justification for the recursive emergence of high symmetry: a generic critical point is statistically unlikely to reside in strata where $\overline{G}$ is large (due to the rarity of the real image), and is thus constrained to strata where $\overline{G}$ is small.
\end{remark}

\section{The Physical Setup: Particle Systems and Shape Spaces}
\label{sec:LJ_setup}

In this section, we formalize the algebraic framework for atomic clusters in dimension $d$. Our primary object of study is a system of $n$ particles governed by a pairwise interaction potential—an energy function depending solely on inter-particle distances. Such potentials are inherently invariant under the full Euclidean group (translations and rotations) as well as particle relabeling.

To eliminate the trivial translational degrees of freedom, we fix the center of mass at the origin. This restricts the system to the following configuration space:

\begin{definition}[Configuration Space]
\label{def:V_configurations}
Let $\mathcal{V} \cong \mathbb{C}^{d(n-1)}$ denote the affine space of centered particle configurations:
\[
    \mathcal{V} := \left\{ x = (x_1,\dots,x_n) \in (\mathbb{C}^d)^n \;\middle|\; \sum_{i=1}^n x_i = 0 \right\}.
\]
\end{definition}

\begin{definition}[Pairwise Interaction Potentials]
\label{def:general_potential}
We consider rational potentials $V: \mathcal{V} \dashrightarrow \mathbb{C}$ of the form
\[
  V(x) := \sum_{1 \le i < j \le n} \phi\left( r_{ij}^2 \right),
\]
where $\phi$ is a rational function and $r_{ij}^2 = \langle x_i - x_j, x_i - x_j \rangle$ is defined via the standard \textbf{symmetric bilinear form} on $\mathbb{C}^d$ (the algebraic extension of the real Euclidean metric).

In particular, the \textbf{Lennard-Jones potential} $V_{LJ}$ corresponds to the specific choice of the rational function:
\[
\phi_{LJ}(z) = \frac{1}{z^6} - \frac{1}{z^3} 
\]
acting on the squared distance $z = r_{ij}^2$.
\end{definition}

\begin{definition}[Group Actions]
\label{def:actions}
With translational symmetry removed, the remaining invariances induce two commuting group actions on $\mathcal{V}$:
\begin{enumerate}
    \item The complex orthogonal group $O(d,\mathbb{C})$, acting by left multiplication (rotations).
    \item The symmetric group $S_n$, acting by right multiplication (relabeling).
\end{enumerate}
\end{definition}

To isolate the intrinsic geometry of the cluster, we first quotient out the continuous rotational symmetries.

\begin{definition}[Shape Space and Physical Cone]
\label{def:shape_space}
The \textbf{Shape Space} is the categorical quotient of the configuration space by the orthogonal group:
\[
    X_{\shape} := \mathcal{V} \sslash O(d,\mathbb{C}).
\]
Let $q : \mathcal{V} \to X_{\shape}$ denote the quotient map.
Strictly real configurations map to a subset of the real locus of the quotient. We define the \textbf{Physical Shape Space} as this image:
\[
    X_{\shape}^+ := q(\mathcal{V}(\mathbb{R})) \subset X_{\shape}(\mathbb{R}).
\]
\end{definition}

Since rotations and permutations commute, the action of $S_n$ descends uniquely to a faithful algebraic action on $X_{\shape}$, given by $q(x) \cdot \sigma := q(x \cdot \sigma)$. We obtain the total quotient space by removing this remaining symmetry.

\begin{definition}[Total Quotient and Descended Potential]
\label{def:Y_shape}
We define the total quotient space by:
\[
    Y_{\shape} := X_{\shape} \sslash S_n,
\]
with the quotient map $\pi : X_{\shape} \to Y_{\shape}$.
Since the potential $V$ is invariant under both groups, it defines an element of the invariant function field $k(\mathcal{V})^{O(d) \times S_n} \cong k(Y_{\shape})$. We denote the descended potential as the rational map:
\[
    \tilde{f}: Y_{\shape} \dashrightarrow \mathbb{C}.
\]
\end{definition}

The algebraic machinery developed above connects directly to physical symmetry. A critical point's stabilizer in the permutation group encodes the physical point group of the configuration.

\begin{proposition}[Correspondence of Point Groups and Stabilizers]
\label{prop:point_group_iso}
Let $x \in \mathcal{V}(\mathbb{R})$ be a configuration. The physical point group $\mathcal{P}(x) \le O(d,\mathbb{R})$ is isomorphic to the stabilizer of the corresponding shape $q(x)$ under the $S_n$-action:
\[
    \mathcal{P}(x) \cong \operatorname{Stab}_{S_n}(q(x)) := \{\sigma \in S_n \mid q(x) \cdot \sigma = q(x)\}.
\]
\end{proposition}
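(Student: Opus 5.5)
The plan is to define a group homomorphism $\Phi: \mathcal{P}(x) \to S_n$ and show that it is injective with image exactly $\operatorname{Stab}_{S_n}(q(x))$. First I must fix what the point group means. $\mathcal{P}(x)$ consists of those $A \in O(d,\mathbb{R})$ that map the point set $\{x_1,\dots,x_n\}$ to itself (as a multiset). So for each such $A$ there is a permutation $\sigma_A$ with $A x = x\cdot \sigma_A$. Because the $x_i$ may coincide, $\sigma_A$ is determined only up to the stabilizer of the multiset. I would therefore first restrict to configurations whose particles are pairwise distinct; these are the physically relevant ones, and the LJ potential is singular otherwise. Under that assumption $\sigma_A$ is unique, and $A\mapsto \sigma_A$ is a homomorphism (or anti-homomorphism, fixed by composing with inversion), because left and right actions commute.

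For well-definedness and injectivity: if $\sigma_A=\mathrm{id}$, then $A x_i = x_i$ for all $i$. Also $x\cdot\sigma_A = Ax$ gives $q(x\cdot\sigma_A)=q(Ax)=q(x)$, so $\sigma_A$ lies in the stabilizer. Injectivity needs the $x_i$ to span $\mathbb{R}^d$, since otherwise $A$ acting on the orthogonal complement is invisible. This is the point where the statement must be read carefully. Either we assume the configuration is full-dimensional, or we define $\mathcal{P}(x)$ as the group of isometries of the affine span; with that convention injectivity holds. I will adopt the full-span (or span-restricted) convention explicitly.

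For surjectivity onto the stabilizer, the key input is the first fundamental theorem for $O(d)$. The ring of $O(d,\mathbb{C})$-invariants on $\mathcal{V}$ is generated by the Gram entries $\langle x_i,x_j\rangle$. Hence $q(x)=q(x')$ for real $x,x'$ forces equality of Gram matrices $x^Tx = x'^Tx'$. Here I must be careful: for the reductive group, fibers of $q$ contain unique closed orbits, and real points of $\mathcal{V}$ have closed $O(d,\mathbb{C})$-orbits because $O(d,\mathbb{R})$ is compact (Kempf–Ness / Birkes). So $q(x)$ is determined by the Gram matrix and conversely, and equal Gram matrices give equal invariants. Now take $\sigma$ with $q(x\cdot\sigma)=q(x)$. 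Then $x$ and $x\cdot\sigma$ are real configurations with the same Gram matrix. The classical fact that two real point tuples with equal Gram matrices differ by a real orthogonal map (defined on the span, extended arbitrarily to $\mathbb{R}^d$) gives $A\in O(d,\mathbb{R})$ with $Ax=x\cdot\sigma$. Thus $A\in\mathcal{P}(x)$ and $\sigma_A=\sigma$.

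The main obstacle is this last step together with the degeneracy caveats. I need to pass from equality in the categorical quotient to equality of Gram matrices and then to a real orthogonal transformation, rather than a merely complex one. I would handle it directly via the Gram matrix, avoiding closed-orbit theory as much as possible, since the invariants are exactly the Gram entries. Then I would state clearly the genericity hypotheses (distinct particles, full affine span) under which the isomorphism holds.
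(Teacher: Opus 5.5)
Your argument is correct and uses the same correspondence $R\cdot x = x\cdot\sigma$ as the paper's proof, only read as a homomorphism $\mathcal{P}(x)\to S_n$ rather than as the assignment $\sigma\mapsto R$. The paper simply asserts that $q(x\cdot\sigma)=q(x)$ places $x\cdot\sigma$ in the $O(d,\mathbb{R})$-orbit of $x$ ``by definition of the quotient'' and appeals to unspecified ``generic configurations''; you instead supply the actual justification (the Gram entries are $O(d,\mathbb{C})$-invariant, so equal images force equal Gram matrices, and positive definiteness of the real inner product then produces a \emph{real} orthogonal map---so you do not even need the first fundamental theorem or closed-orbit theory) together with the precise hypotheses, distinct particles and full linear span, without which the statement genuinely fails (e.g.\ a centered equilateral triangle in $\mathbb{R}^3$ has point group $D_{3h}$ of order $12$, while $\operatorname{Stab}_{S_3}(q(x))=S_3$ has order $6$).
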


\begin{proof}
Let $\sigma \in \operatorname{Stab}_{S_n}(q(x))$. By definition of the quotient, $q(x \cdot \sigma) = q(x)$ implies that the permuted configuration $x \cdot \sigma$ lies in the same $O(d, \R)$-orbit as $x$. Thus, there exists a transformation $R \in O(d,\mathbb{R})$ such that $R \cdot x = x \cdot \sigma$. For generic configurations where the orbit map is faithful, the assignment $\sigma \mapsto R$ defines the required isomorphism.
\end{proof}

\subsection{Metric Rarity in the Shape Space Quotient}

Having established the algebraic structure of the total quotient $Y_{\text{shape}}$, we now turn to the metric geometry of the real locus. To quantify the volume of physical configurations within the ambient space, we must first equip the quotient with a measure.

We start by endowing the configuration space $\mathcal{V}$ with a Hermitian metric invariant under both rotations $O(d,\mathbb{R})$ and permutations $S_n$. This structure descends naturally to the quotient, recovering the standard measure of structural similarity used in chemical physics.

\begin{definition}[Root Mean Square Deviation (RMSD) Distance]
\label{def:rmsd_distance}
The distance between two shapes $[x], [y]\in X_{\shape}^+$ in the principal stratum of the physical shape space is defined by the minimal Euclidean distance between their orbits (optimal alignment):
\[
d_X([x],[y]) := \inf_{R \in O(d,\mathbb{R})} \|x - R \cdot y\|_{\mathcal{V}}.
\]
\end{definition}

This distance function arises from a rigorous Riemannian structure.

\begin{proposition}[Riemannian Structure and Distance Compatibility]
\label{prop:metric_standard_result}
There exists a unique smooth Riemannian metric $g_X$ on $X_{\shape}^{\mathrm{\pr}}$ such that the quotient map $q: \mathcal{V}^{{\mathrm{\pr}}} \to X_{\shape}^{{\mathrm{\pr}}}$ 
restricts to a Riemannian submersion.
The intrinsic geodesic distance induced by $g_X$ coincides exactly with the optimal alignment distance $d_X$ defined above.
\end{proposition}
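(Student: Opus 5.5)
The plan is to follow the standard construction of the quotient Riemannian metric for a free isometric action of a compact Lie group. I then identify the induced distance with the alignment infimum by lifting curves horizontally.

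\textbf{Step 1 (the principal stratum as a smooth quotient).} On $\mathcal{V}^{\pr}(\mathbb{R})$, the configurations with trivial $O(d,\mathbb{R})$-stabilizer (full affine span), the compact group $O(d,\mathbb{R})$ acts freely, properly and isometrically with respect to the Euclidean inner product on $\mathcal{V}(\mathbb{R})$, since that product is invariant by assumption. By the quotient manifold theorem, $X_{\shape}^{\pr}=\mathcal{V}^{\pr}(\mathbb{R})/O(d,\mathbb{R})$ is a smooth manifold and $q$ is a smooth submersion and a principal bundle. I identify this manifold with the corresponding open subset of $q(\mathcal{V}(\mathbb{R}))\subset X_{\shape}(\mathbb{R})$. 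This uses the fact that for a compact group acting on a real representation, real invariants separate real orbits. That fact holds by a Hilbert-type argument: orbits are compact, so the invariants separate them.

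\textbf{Step 2 (the metric and its uniqueness).} At $x\in\mathcal{V}^{\pr}$, I split $T_x\mathcal{V}=\mathcal{T}_x\oplus\mathcal{H}_x$, where the vertical space $\mathcal{T}_x=\mathfrak{o}(d)\cdot x$ is the tangent space to the orbit and the horizontal space $\mathcal{H}_x$ is its orthogonal complement. The differential $dq_x$ restricts to an isomorphism $\mathcal{H}_x\to T_{q(x)}X_{\shape}^{\pr}$. I define $g_X$ by transporting the inner product through this isomorphism. The result is well defined because the isometries $R$ map $\mathcal{H}_x$ onto $\mathcal{H}_{Rx}$ and commute with $q$, and it is smooth because the horizontal distribution is smooth. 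Uniqueness is immediate: a Riemannian submersion forces $dq_x|_{\mathcal{H}_x}$ to be an isometry, and this determines $g_X$ pointwise.

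\textbf{Step 3 (distance compatibility).} This is the main obstacle. For the inequality $d_{g_X}\le d_X$, take $R$ attaining the infimum (it exists by compactness of $O(d,\mathbb{R})$). The straight segment from $x$ to $Ry$ projects to a curve whose $g_X$-length is at most its Euclidean length, because $|dq(v)|\le|v|$ for a Riemannian submersion. For $d_X\le d_{g_X}$, take any piecewise smooth curve $\gamma$ in $X_{\shape}^{\pr}$ from $[x]$ to $[y]$ and lift it horizontally starting at $x$. Such a lift exists and is unique: it solves a linear ODE on the principal bundle, and compactness of the fibre rules out escape. The lift has the same length as $\gamma$ and ends at some $Ry$, so $\|x-Ry\|\le\operatorname{length}(\gamma)$, and taking the infimum over $\gamma$ gives the inequality.

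The delicate point is that the straight segment in Step 3 may leave $\mathcal{V}^{\pr}$, passing through degenerate configurations. In that case I would perturb: $\mathcal{V}^{\pr}$ is the complement of a subvariety of codimension at least $2$ when $n-1\ge d+1$, so segments can be approximated by curves inside it with arbitrarily close length. If the codimension-$2$ condition fails, I would interpret $d_{g_X}$ as the length metric of the completion, and restrict the claim accordingly.
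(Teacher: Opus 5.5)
Up to the distance identity you are following the paper. Its proof only cites Corollary 2.29 of Lee (quotient metric for a free, proper, isometric action) and Proposition 2.109 of Gallot--Hulin--Lafontaine (orbit distance under a Riemannian submersion), and your Steps 1--2 and the horizontal-lift half of Step 3 are exactly what those results contain. You also correctly spot something that citation passes over. The standard formula yields $\inf_R d_{\mathcal{V}^{\pr}}(x,Ry)$, where $d_{\mathcal{V}^{\pr}}$ is the intrinsic length metric of the non-convex open set $\mathcal{V}^{\pr}$, not $\inf_R\|x-Ry\|$. Your handling of this leaves a genuine gap. For $n=d+1$ (triangles in the plane, tetrahedra in space) the degenerate locus is the hypersurface $\det x=0$, so your codimension-two perturbation is unavailable. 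There you fall back on weakening the statement, even though the statement is true in that case.

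The missing idea is that no perturbation is needed: the segment from $x$ to an \emph{optimally} aligned $R^*y$ never leaves $\mathcal{V}^{\pr}$.
\begin{itemize}
\item Write centred configurations as $d\times(n-1)$ matrices in a basis of $\{w\in\mathbb{R}^n:\sum_i w_i=0\}$. By Schur's lemma an $O(d)$-invariant inner product is $\operatorname{tr}(xBy^\top)$ with $B$ positive definite, and $x\mapsto xB^{1/2}$ reduces this to the Frobenius case without changing ranks.
\item Minimizing $\|x-Ry\|$ then means maximizing $\operatorname{tr}(R\,yx^\top)$ over $O(d,\mathbb{R})$. At a maximizer $R^*$, set $S=R^*yx^\top$. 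It is symmetric: differentiate along $e^{sA}R^*$ with $A$ skew.
\item $S$ is also positive semidefinite. If $Sv=\lambda v$ with $\lambda<0$ and $|v|=1$, then $(I-2vv^\top)R^*\in O(d,\mathbb{R})$ raises the trace by $-2\lambda>0$. This is where the reflections in $O(d)$ are used.
\item Hence for $z_t=(1-t)x+tR^*y$ and $0\le t<1$, the matrix $z_tx^\top=(1-t)xx^\top+tS$ is positive definite, because $xx^\top$ is. So $\operatorname{rank}z_t=d$, and also $z_1=R^*y\in\mathcal{V}^{\pr}$.
\end{itemize}
Projecting this segment gives $d_{g_X}\le d_X$ for every $n\ge d+1$ at once, and the codimension count becomes unnecessary. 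Combined with your horizontal-lift inequality, this proves the statement as given.

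For $n\le d$ the action has no free orbits at all. There neither your Step 1 nor the paper's appeal to a free action applies without passing to the principal orbit-type stratum.
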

\begin{proof}
The existence of the unique smooth metric $g_X$ follows from the standard construction of quotients by proper, free, and isometric actions (guaranteed by \textbf{Corollary 2.29 in \cite{Lee2018}}).
The fact that the induced Riemannian distance equals the distance between orbits is a property of Riemannian submersions; see \textbf{Proposition 2.109 in \cite{GHL2004}}.
\end{proof}

Since the permutation group $S_n$ acts by isometries on the Riemannian manifold $(X_{\shape}, g_X)$, we can push the metric structure forward to the final quotient. We define the measure on $Y_{\shape}$ explicitly via the orbifold pushforward.

\begin{definition}[Induced Geometric Measure on $Y_{\shape}(\mathbb{R})$]
\label{def:algebraic_involutions_measure_shape}
Let $g_X$ be the Riemannian metric on the principal stratum of $X_{\shape}$ (Proposition \ref{prop:metric_standard_result}). We define the measure $\mu_{Y_{\shape}}$ on the real locus of the total quotient via the pullback to the covering space:
\[
    \mu_{Y_{\shape}}(U) := \frac{1}{|S_n|} \mathcal{H}^D_{(X_{\shape}, g_X)}\left(\pi^{-1}(U)\right) \quad \text{for any measurable } U \subset Y_{\shape}(\mathbb{R}),
\]
where $\mathcal{H}^D$ is the $D$-dimensional Hausdorff measure and $D = \dim_{\mathbb{R}} Y_{\shape}(\mathbb{R})$.
\end{definition}
We are now ready to quantify the relative volume of the real image $L = \pi(X_{\shape}(\mathbb{R}))$ within the ambient quotient space $Y_{\shape}(\mathbb{R})$.
We first require a technical lemma ensures that metric symmetries of the covering space descend to measure-preserving maps on the quotient.

\begin{lemma}[Descending Isometries]
\label{lem:descend_isometry_shape}
Let $\Phi:\mathcal{V}^{\mathrm{pr}}\to \mathcal{V}^{\mathrm{pr}}$ be an isometry that commutes with the $O(d,\mathbb{R})$-action. Then $\Phi$ induces a well-defined isometry $\overline{\Phi}$ on the quotient $X_{\shape}^{\mathrm{pr}}$ with respect to the optimal-alignment metric $d_X$. Consequently, $\overline{\Phi}$ preserves the Hausdorff measure $\mathcal{H}^D$.
\end{lemma}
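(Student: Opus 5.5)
The plan is to show that $\Phi$ descends to $X_{\shape}^{\mathrm{pr}}$ as a map of orbits, check that it preserves the alignment distance $d_X$, and then conclude measure preservation from the metric definition of $\mathcal{H}^D$.

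\textbf{Well-definedness.} If $[x]=[y]$, then $y=R\cdot x$ for some $R\in O(d,\mathbb{R})$. Because $\Phi$ commutes with the action, $\Phi(y)=\Phi(R\cdot x)=R\cdot\Phi(x)$, so $\Phi(y)$ and $\Phi(x)$ lie in the same orbit. We therefore set $\overline{\Phi}([x]):=[\Phi(x)]$. Since $\Phi$ maps $\mathcal{V}^{\mathrm{pr}}$ to itself, $\overline{\Phi}$ maps $X_{\shape}^{\mathrm{pr}}$ to itself. Smoothness of $\overline{\Phi}$, if needed, follows from the fact that $q$ is a smooth submersion admitting local sections, but the metric statement does not need it.

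\textbf{Isometry.} For $[x],[y]$ in the principal stratum, use Definition~\ref{def:rmsd_distance} together with $\Phi$ being an isometry of $\|\cdot\|_{\mathcal{V}}$ and equivariant:
\[
d_X(\overline{\Phi}[x],\overline{\Phi}[y])=\inf_{R}\|\Phi(x)-R\cdot\Phi(y)\|=\inf_R\|\Phi(x)-\Phi(R\cdot y)\|=\inf_R\|x-R\cdot y\|=d_X([x],[y]).
\]
Thus $\overline{\Phi}$ is distance-preserving.

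\textbf{Bijectivity.} The main subtle point is that $\overline{\Phi}$ is a bijection onto $X_{\shape}^{\mathrm{pr}}$, so that $\mathcal{H}^D(\overline{\Phi}(A))=\mathcal{H}^D(A)$ holds and not merely an inequality. Injectivity comes for free, because $d_X$ is a genuine metric on the principal stratum: it coincides with the Riemannian distance by Proposition~\ref{prop:metric_standard_result}, and orbits of the compact group are closed. Surjectivity follows if $\Phi$ is a bijective isometry of $\mathcal{V}^{\mathrm{pr}}$. I will read ``isometry'' in this sense. In the intended applications, $\Phi$ is a unitary (or linear) map restricted to an invariant set, and its inverse $\Phi^{-1}$ also commutes with $O(d,\mathbb{R})$ and descends to $\overline{\Phi}^{-1}$.

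\textbf{Measure preservation.} The Hausdorff measure $\mathcal{H}^D$ is built purely from diameters of covers in the metric $d_X$, which equals the Riemannian distance of $g_X$. A surjective isometry maps every $\delta$-cover to a $\delta$-cover with the same diameters, and the inverse map does the same in the other direction. Hence $\mathcal{H}^D(\overline{\Phi}(A))=\mathcal{H}^D(A)$ for every $A$.
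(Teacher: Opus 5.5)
Your proof is correct and follows the same route as the paper. Well-definedness comes from $\Phi(R\cdot x)=R\cdot\Phi(x)$, and the isometry property comes from writing $d_X$ as an infimum over the orbit; you make that computation explicit where the paper only asserts it. The bijectivity hypothesis you add is harmless but not needed for $\mathcal{H}^D(\overline{\Phi}(A))=\mathcal{H}^D(A)$: the Hausdorff measure of a set depends only on the metric restricted to that set, so any distance-preserving map already gives equality, and surjectivity matters only for the pushforward form $\mathcal{H}^D(\overline{\Phi}^{-1}(B))=\mathcal{H}^D(B)$, which holds in the intended application since $\Phi$ is unitary there.
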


\begin{proof}
If $[x]=[y]$, then $y=R\cdot x$ for some rotation $R$. By commutativity and isometry of $\Phi$, we have $\Phi(y) = R \cdot \Phi(x)$, so $[\Phi(y)] = [\Phi(x)]$. The fact that $\overline{\Phi}$ is an isometry follows directly from the definition of the distance $d_X$ as an infimum over orbits.
\end{proof}

\begin{theorem}[Relative Volume of $\pi$ real image]
\label{thm:relative_volume_shape_Y}
Let $B(R) \subset Y_{\shape}(\mathbb{R})$ be a metric ball centered at the origin. The relative volume of the physical configurations within the ambient quotient space is given by:
\[
\frac{\mu_{Y_{\shape}}\left(L \cap B(R)\right)}{\mu_{Y_{\shape}}\left(B(R)\right)} = \frac{1}{\#\mathrm{Inv}(S_n)}.
\]
\end{theorem}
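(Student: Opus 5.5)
The plan is to mimic the proof of Theorem \ref{thm:rarity_real_image_involutions}, now working on the principal stratum of $X_{\shape}$ with the Riemannian metric $g_X$ of Proposition \ref{prop:metric_standard_result} and the measure $\mu_{Y_{\shape}}$ of Definition \ref{def:algebraic_involutions_measure_shape}.

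The first step is the fibrewise description of real points. If $y\in Y_{\shape}(\mathbb{R})$ and $\pi(x)=y$ with $x\in X_{\shape}$ in the principal stratum (trivial stabilizer), then $\pi(\overline{x})=\overline{y}=y$, so $\overline{x}=x\cdot\sigma$ for a unique $\sigma\in S_n$. Applying conjugation again gives $x=x\cdot\sigma^2$, hence $\sigma^2=\mathrm{id}$. Therefore, up to the lower-dimensional non-principal locus (which is $\mathcal{H}^D$-null), we get
\[
\pi^{-1}(Y_{\shape}(\mathbb{R}))\approx\bigsqcup_{\sigma\in\mathrm{Inv}(S_n)}U_\sigma,\qquad U_\sigma:=\{x\in X_{\shape}:\ \overline{x}=x\cdot\sigma\}.
\]
Here $U_{\mathrm{id}}=X_{\shape}(\mathbb{R})$, and the pieces are disjoint on the principal stratum because $\sigma$ is determined by $x$.

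The second step, which is the main obstacle, is to show that each $U_\sigma\cap B(R)$ has the same $\mathcal{H}^D$-measure as $U_{\mathrm{id}}\cap B(R)$. I would lift to $\mathcal{V}$. Diagonalize the real involution $\sigma$ acting on $\mathbb{R}^{n-1}$ (the centered permutation representation) as $V_+\oplus V_-$, and define the unitary map
\[
\Phi_\sigma:=\mathrm{id}_{\mathbb{C}^d}\otimes\bigl(\mathrm{id}_{V_+}\oplus i\cdot\mathrm{id}_{V_-}\bigr).
\]
This map is a Hermitian isometry of $\mathcal{V}$. It acts only on the label factor, so it commutes with the left $O(d,\mathbb{C})$-action, and it sends $\mathcal{V}(\mathbb{R})$ onto $\{v:\overline{v}=v\cdot\sigma\}$. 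By Lemma \ref{lem:descend_isometry_shape}, $\Phi_\sigma$ descends to a measure-preserving map $\overline{\Phi}_\sigma$ on $X_{\shape}$ that carries the image of $\mathcal{V}(\mathbb{R})$ into $U_\sigma$ and, being linear and isometric, preserves balls about the origin.

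The delicate point is that $U_\sigma$ may be strictly larger than $q$ of the twisted real locus of $\mathcal{V}$. For example, $U_{\mathrm{id}}=X_{\shape}(\mathbb{R})$ contains non-physical real shapes that are images of non-real configurations; this is exactly why the $2^{\min(d,n)}$ factor appears later in Theorem \ref{thm:relative_volume_real_image_shape}. I would handle this by applying the same twist to every $O(d)$-real form. The real points of $X_{\shape}$ over a real form are indexed by signatures, and $\Phi_\sigma$ commutes with the whole $O(d,\mathbb{C})$-action, so it maps each signature component of $U_{\mathrm{id}}$ isometrically onto the corresponding component of $U_\sigma$. Summing over signatures then gives
\[
\mathcal{H}^D(U_\sigma\cap B(R))=\mathcal{H}^D(U_{\mathrm{id}}\cap B(R))=:C_R \quad\text{for all }\sigma\in\mathrm{Inv}(S_n).
\]
The metric on the non-physical components has to be taken as the Hermitian-induced one, so that Lemma \ref{lem:descend_isometry_shape} applies there as well. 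I expect this to require the most care, in particular checking that the $O(d,\mathbb{C})$-quotient metric is well defined on those components.

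The last step is summation. For the denominator,
\[
\mu_{Y_{\shape}}(B(R))=\frac{\#\mathrm{Inv}(S_n)}{n!}\,C_R.
\]
For the numerator, $\pi^{-1}(L)$ meets $U_\sigma$ for $\sigma\neq\mathrm{id}$ only in a null set. Indeed, if $x\cdot\tau$ lies in $U_\sigma$ with $x$ real and principal, then $\overline{x\cdot\tau}=x\cdot\tau$, which forces $\sigma=\mathrm{id}$. Hence
\[
\mu_{Y_{\shape}}(L\cap B(R))=\frac{1}{n!}\,\mathcal{H}^D(U_{\mathrm{id}}\cap B(R))=\frac{C_R}{n!},
\]
where $L$ is read as $\pi(X_{\shape}(\mathbb{R}))$ as in the sentence preceding the statement. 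Taking the ratio gives $1/\#\mathrm{Inv}(S_n)$, independently of $R$.
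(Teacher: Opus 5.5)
Your proposal follows the paper's proof: the same decomposition of $\pi^{-1}(Y_{\shape}(\mathbb{R}))$ into twisted sectors $U_\sigma$ indexed by involutions, the same unitary label twist descended through Lemma~\ref{lem:descend_isometry_shape} (your $\Phi_\sigma$ is the paper's $v\mapsto vA_\sigma$ with $\overline{A_\sigma}=A_\sigma\sigma$), and the same summation over sectors. The signature-by-signature detour is unnecessary, because the descended map is an automorphism of all of $X_{\shape}$ satisfying $\overline{\overline{\Phi}_\sigma(x)}=\overline{\Phi}_\sigma(\overline{x})\cdot\sigma$ and so carries $U_{\mathrm{id}}$ onto $U_\sigma$ directly; the issue you flag, namely having a well-defined metric on the non-physical sectors, is one the paper's proof also takes for granted.
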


\begin{proof}
A point $y \in Y_{\shape}(\mathbb{R})$ is real if and only if its preimage lies in a twisted real sector. Specifically, $\pi^{-1}(Y_{\shape}(\mathbb{R}))$ decomposes (up to measure zero) into a union over involutions:
\[
    \bigcup_{\sigma \in \mathrm{Inv}(S_n)} U_\sigma, \quad \text{where } U_\sigma := \{ x \in X_{\shape} \mid \overline{x} = x \cdot \sigma \}.
\]
The physical image $L$ corresponds solely to the identity sector $U_{\mathrm{id}}$.
We claim all sectors have identical ball volumes. For any involution $\sigma$, there exists a unitary matrix $A_\sigma$ such that $\overline{A_\sigma} = A_\sigma \cdot \sigma$. The map $\Phi_\sigma(v) = v A_\sigma$ is a global isometry of $\mathcal{V}$ that commutes with rotations $O(d)$. By Lemma \ref{lem:descend_isometry_shape}, it descends to an isometry mapping $U_{\mathrm{id}}$ onto $U_\sigma$.
Summing over all sectors yields the result.
\end{proof}
The rarity established above accounts for the permutation structure. However, a second geometric constraint arises from the quotient by the continuous group $O(d,\C)$. The real locus of the shape space contains non-physical configurations corresponding to indefinite metric tensors.

\begin{theorem}[Relative Volume of $q$ real image]
\label{thm:relative_volume_shape_X}
Let $r = \min(d,n)$. For any ball $B(R) \subset X_{\shape}(\mathbb{R})$, the relative volume of the physical configurations $X_{\shape}^+= q(\mathcal{V}(\mathbb{R}))$ is:
\[
\frac{\vol_{X_{\shape}}\left(X_{\shape}^+\cap B(R)\right)}{\vol_{X_{\shape}}\left(B(R)\right)} = \frac{1}{2^r}.
\]
\end{theorem}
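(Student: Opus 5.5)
We will run the same twisted-real-forms argument used for Theorem \ref{thm:rarity_real_image_involutions}, now for the continuous group $O(d,\mathbb{C})$ rather than a finite group.

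\textbf{Step 1: identify the real points of the shape space.} By the First Fundamental Theorem for $O(d,\mathbb{C})$, the invariants of $\mathcal{V}$ are generated by the Gram entries $\langle x_i,x_j\rangle$. Hence $X_{\shape}$ is the variety of symmetric $(n-1)\times(n-1)$ Gram matrices $M$ of rank at most $d$, and $q(x)=x^Tx$.

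A real point of $X_{\shape}$ is a real symmetric matrix $M$ of rank $\le d$, and such an $M$ has some signature $(p,s)$ with $p+s\le r$. The physical image $X_{\shape}^+$ is exactly the positive semidefinite sector $s=0$. The other sectors come from twisted real forms: writing $\overline{x}=\epsilon x$ with $\epsilon=\mathrm{diag}(\pm1)\in O(d,\mathbb{R})$ an involution, these are the analogues of the $U_\sigma$. They are indexed by the conjugacy classes of involutions in $O(d)$, equivalently by the $H^1$ classes of real forms of $O(d)$ (cf.\ Remark~\ref{rem:galois_cohomology}).

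\textbf{Step 2: show all admissible sign patterns have equal volume.} Working on the principal stratum, we decompose $q^{-1}(X_{\shape}(\mathbb{R}))$ up to measure zero as a union of sectors
\[
W_\epsilon=\{x : \overline{x}=\epsilon x\}, \qquad \epsilon=\mathrm{diag}(\pm1),
\]
one for each choice of a sign pattern on the $r$ generic effective directions. This gives $2^r$ patterns.

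We then produce, for each $\epsilon$, a unitary map on $\mathcal{V}$ carrying the real sector onto $W_\epsilon$: multiply the coordinates with $\epsilon_k=-1$ by $i$. The intended consequence is that this map acts on Gram matrices by flipping the signs of the corresponding eigen-directions. We then apply Lemma~\ref{lem:descend_isometry_shape} to conclude that the induced map on the quotient preserves $\mathcal{H}^D$ and maps balls centered at the origin to balls.

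\textbf{Step 3: conclude.} Summing as in the proof of Theorem~\ref{thm:rarity_real_image_involutions}, the total volume of the ball equals $2^r$ times the volume of the PSD sector, which gives the ratio $1/2^r$.

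\textbf{Main obstacle.} Step 2 is the delicate part, for two reasons.

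\emph{The sign-flip map is not equivariant.} Multiplying selected coordinates by $i$ commutes only with the subgroup of $O(d)$ preserving that coordinate splitting, not with all of $O(d)$. So Lemma~\ref{lem:descend_isometry_shape} does not apply verbatim. My first attempt would be to work with eigenvalue coordinates of $M$, computing the pushforward density of the Hausdorff measure via the Weyl integration formula. The goal is to check that the density is invariant under the sign flips $\lambda_k\mapsto-\lambda_k$, since it depends only on $|\lambda_k|$ and on Vandermonde factors $|\lambda_j-\lambda_k|$ after the appropriate unitary twist.

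\emph{The count of sectors.} Sign patterns related by the Weyl symmetry of permuting eigenvalues must not be double counted. We would fix the count at $2^r$ by ordering the eigenvalues and then choosing a sign for each nonzero one, so that the number of sectors is exactly $2^r$ rather than the number of signature classes $r+1$.
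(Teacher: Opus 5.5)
You follow the paper's own route. Its proof likewise splits $X_{\shape}(\R)$ into $2^r$ sign sectors, multiplies the spatial rows carrying a minus sign by $i$, uses Schur's lemma to see that this $U$ is unitary for every $O(d)$-invariant Hermitian metric, and then equates preimage volumes and sums. The obstacle you flag is genuine, and the paper's proof passes over it rather than resolving it. Writing $Ux=Dx$ with $D^2=I_{p,s}$, one gets $q(Ux)=x^{T}I_{p,s}x$, which is not a function of $q(x)=x^{T}x$. For instance, with $d=n=2$ and $U=\mathrm{diag}(1,i)$, the points $e_1$ and $e_2$ have the same Gram matrix $1$, yet $q(Ue_1)=1\neq -1=q(Ue_2)$. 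So $U$ induces no map on $X_{\shape}$, and Lemma~\ref{lem:descend_isometry_shape} is unavailable.

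Your repair does not close the gap, because the check you propose fails. A flip $\lambda_k\mapsto-\lambda_k$ turns the Vandermonde factor $|\lambda_j-\lambda_k|$ into $|\lambda_j+\lambda_k|$, so a density $\phi(|\lambda_1|,\dots,|\lambda_r|)\prod_{j<k}|\lambda_j-\lambda_k|$ is not flip-invariant. Concretely, take $d=2$, $n=3$, so $X_{\shape}(\R)=\mathrm{Sym}_2(\R)$, with Lebesgue (Frobenius) measure. The PSD cone is the circular cone of half-angle $\pi/4$ about $I$ and fills the fraction $(2-\sqrt{2})/4\approx 0.146$ of every centered ball. Each indefinite cell (patterns $(+,-)$ and $(-,+)$, eigenvalues ordered by absolute value) fills $1/(2\sqrt{2})\approx 0.354$. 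The four cells are unequal, and the ratio is not $1/4$.

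What is really missing is a definition of $\vol_{X_{\shape}}$ away from $X_{\shape}^{+}$, and the finite-group recipe of Theorem~\ref{thm:rarity_real_image_involutions} does not transfer.
\begin{itemize}
\item For the positive-dimensional group $O(d,\C)$, reality of $q(x)$ only gives $\overline{x}=gx$ with $g\in O(d,\C)$ and $g\overline{g}=1$. So on the generic locus $q^{-1}(X_{\shape}(\R))=\bigcup_\epsilon O(d,\C)\cdot W_\epsilon$, not $\bigcup_\epsilon W_\epsilon$.
\item Inside $W_\epsilon$ the fibers of $q$ are orbits of $O(p,s)$. For $p,s>0$ this group is noncompact and preserves the Hermitian volume of $W_\epsilon$, but it does not act isometrically. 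Hence the pushforward of that volume is infinite on every nonempty open subset of the indefinite region, and there is no Riemannian submersion to use.
\item The $W_\epsilon$ of equal signature are translates of one another by permutation matrices in $O(d,\R)$ and have the same image. So ``one equal-volume sector per $\epsilon$'' amounts to claiming that the signature-$(p,s)$ region carries exactly $\binom{r}{p}$ times the volume of the PSD cone. In the example above the indefinite region carries $2(1+\sqrt{2})$ times that volume, not $2$ times.
\end{itemize}

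A value $2^{-k}$, with $k$ the generic rank, does hold if you build flip invariance into the definition. For example, pull back the volume and radius of $X_{\shape}^{+}$ along $M\mapsto|M|$, which is $2^{k}$-to-one on the generic locus; but then the theorem merely restates that choice.

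Finally, the Gram matrix is $(n-1)\times(n-1)$, so $k=\min(d,n-1)$, and your count $2^{r}$ with $r=\min(d,n)$ is wrong for $d\ge n$. For $d=n=2$, $X_{\shape}(\R)\cong\R$ and the physical image is a half-line, so any measure symmetric under $t\mapsto -t$ gives $1/2$, not $1/4$.
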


\begin{proof}
We analyze the local geometry of the quotient map via the spectral properties of the Gram matrix.

\textbf{Step 1: Spectral Decomposition and Signatures.}
The shape space $X_{\shape}$ is isomorphic to the variety of symmetric $n \times n$ matrices of rank at most $d$. 
Let $r = \min(d,n)$. For a generic real point $x \in X_{\shape}(\mathbb{R})$, 
the corresponding Gram matrix possesses $r$ non-zero real eigenvalues.
The map $q: \mathcal{V} \to X_{\shape}$, given by $q(v) = v^\top v$, maps physical configurations 
$v \in \mathcal{V}(\mathbb{R})$ to positive semi-definite matrices (all eigenvalues positive).
However, the ambient real quotient $X_{\shape}(\mathbb{R})$ generically decomposes into $2^r$ disjoint open 
sectors ("orthants") $\mathcal{O}_\epsilon$, indexed by the signs of the eigenvalues $\epsilon \in \{-1, 1\}^r$. 
The physical image $X_{\shape}^+$ is precisely the totally positive sector.

\textbf{Step 2: Construction of the Isometry.}
We demonstrate that all sectors $\mathcal{O}_\epsilon$ have identical ball volumes.
Fix a signature $\epsilon$ with $p$ positive and $q$ negative signs. 
The preimage $\pi^{-1}(\mathcal{O}_\epsilon)$ consists of configurations where, up to a basis change, $p$ spatial rows are real and $q$ are purely imaginary.
Define the linear map $U: \mathcal{V} \to \mathcal{V}$ that multiplies the spatial coordinates corresponding to the $q$ negative eigenvalues by $i$. This map defines a bijection between the real locus $\mathcal{V}(\mathbb{R})$ (the preimage of the physical sector) and the twisted subspace $\pi^{-1}(\mathcal{O}_\epsilon)$.

\textbf{Step 3: Metric Invariance via Schur's Lemma.}
The transformation $U$ is an isometry of the ambient Hermitian space $(\mathcal{V}, h)$.
The metric $h$ is required to be $O(d)$-invariant. Since the standard representation of $O(d)$ on the spatial coordinates $\mathbb{C}^d$ is irreducible, \textbf{Schur's Lemma} implies that the restriction of $h$ to the spatial factor must be proportional to the standard Euclidean inner product.
Since $U$ acts on the spatial coordinates simply by scalar multiplication by $i$ (a unitary operation, as $|i|=1$), it preserves the standard inner product. Consequently, $U$ preserves the global metric $h$.

\textbf{Conclusion.}
As a global isometry of the covering space, $U$ preserves the Hausdorff measure of the preimages:
\[
\mathcal{H}^D(\pi^{-1}(X_{\shape}^+ \cap B(R))) = \mathcal{H}^D(\pi^{-1}(\mathcal{O}_\epsilon \cap B(R))).
\]
By the definition of the quotient measure (Definition \ref{def:algebraic_involutions_measure_shape}), equal preimage volumes imply equal quotient volumes. Summing over the $2^r$ distinct signatures yields the relative volume factor $1/2^r$.
\end{proof}
We now combine the topological constraints (involutions) and the geometric constraints (signature) to quantify the total rarity of the physical domain.

\begin{theorem}[Relative Volume of the Real Image]
\label{thm:relative_volume_real_image_shape}
For any metric ball $B(R) \subset Y_{\shape}(\mathbb{R})$ centered at the origin, the relative volume of the physical image $L = \pi(X_{\shape}^+)$ is given by:
\[
    \frac{\mu_{Y_{\shape}}(L \cap B(R))}{\mu_{Y_{\shape}}(B(R))} = \frac{1}{2^{\min(d,n)} \cdot \#\mathrm{Inv}(S_n)}.
\]
\end{theorem}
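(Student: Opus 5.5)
The plan is to combine Theorem~\ref{thm:relative_volume_shape_Y} and Theorem~\ref{thm:relative_volume_shape_X} through a twisted-sector decomposition of the full preimage in the covering space $\mathcal{V}$. The product $G=O(d,\C)\times S_n$ acts on $\mathcal{V}$, and the relevant maximal compact subgroup is $K=O(d,\R)\times S_n$. A generic real point $y\in Y_{\shape}(\R)$ lifts to some $v\in\mathcal{V}$ with $\overline{v}=R\,v\,\sigma$ for an involution $\sigma\in\mathrm{Inv}(S_n)$ and a compatible twist in $O(d,\C)$. Up to conjugation and generic-stratum considerations, the latter twist is encoded by a signature $\epsilon\in\{\pm1\}^{r}$ with $r=\min(d,n)$. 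So, up to $\mu_{Y_{\shape}}$-null sets, I would write $Y_{\shape}(\R)\cap B(R)$ as a disjoint union of sectors $\mathcal{S}_{\epsilon,\sigma}$, indexed by pairs $(\epsilon,\sigma)$. The physical image $L=\pi(X_{\shape}^+)$ is exactly the sector with $\epsilon=(+,\dots,+)$ and $\sigma=\mathrm{id}$.

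The steps, in order, are as follows.

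\textbf{(i) Decomposition of the real locus.} Apply the argument of Theorem~\ref{thm:relative_volume_shape_Y} to $\pi:X_{\shape}\to Y_{\shape}$. This gives $\pi^{-1}(Y_{\shape}(\R))\approx\bigsqcup_\sigma U_\sigma$ with $U_\sigma=\{x\in X_{\shape}:\overline{x}=x\cdot\sigma\}$, and each $U_\sigma$ has the same ball volume as $U_{\mathrm{id}}=X_{\shape}(\R)$ via the descended isometry $\overline{\Phi_\sigma}$ of Lemma~\ref{lem:descend_isometry_shape}. Consequently
\[
\mu_{Y_{\shape}}(B(R))=\frac{\#\mathrm{Inv}(S_n)}{n!}\,\mathcal{H}^D\bigl(X_{\shape}(\R)\cap\pi^{-1}B(R)\bigr).
\]

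\textbf{(ii) Numerator.} Points with trivial stabilizer in $\pi^{-1}(L)$ lie only in $U_{\mathrm{id}}$, and there in the positive sector $X_{\shape}^+$. This uses the conjugation argument from Theorem~\ref{thm:rarity_real_image_involutions}, applied on the principal stratum, where orbits are free. Hence
\[
\mu_{Y_{\shape}}(L\cap B(R))=\frac{1}{n!}\,\mathcal{H}^D\bigl(X_{\shape}^+\cap\pi^{-1}B(R)\bigr).
\]

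\textbf{(iii) Signature factor.} Theorem~\ref{thm:relative_volume_shape_X} gives $\mathcal{H}^D(X_{\shape}^+\cap \pi^{-1}B(R))=2^{-r}\,\mathcal{H}^D(X_{\shape}(\R)\cap\pi^{-1}B(R))$. Here I need $\pi^{-1}B(R)$ to be a ball in $X_{\shape}(\R)$, or at least a set to which that theorem applies. This holds if the metric on $Y_{\shape}$ is the orbit-distance metric, because then $\pi^{-1}B_Y(R)$ is the union of $S_n$-translates of $B_X(R)$ centered at $0$, which is just $B_X(R)$.

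\textbf{(iv) Conclusion.} Divide (ii) by (i) and substitute (iii) to get $1/(2^r\,\#\mathrm{Inv}(S_n))$.

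The main obstacle is step (iii): showing that the signature isometry $U$ from Theorem~\ref{thm:relative_volume_shape_X} is compatible with the $S_n$-structure, so that the two factors genuinely multiply rather than interact. Theorem~\ref{thm:relative_volume_shape_X} is a statement about balls in $X_{\shape}(\R)$. What I must ensure is that intersecting with $\pi^{-1}B(R)$ and passing to sectors does not break the equal-volume property. My first attempt is to note that $U$ acts on spatial (row) coordinates, while $S_n$ acts on particle (column) coordinates, so $U$ commutes with the $S_n$-action. It also commutes with $O(d,\R)$ in the relevant basis, since it is block-scalar in a frame adapted to $\epsilon$. Therefore $U$ descends to an isometry of $X_{\shape}$ that is $S_n$-equivariant and preserves $\pi^{-1}B(R)$, since it fixes $0$ and preserves norms. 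If the adapted frame depends on the point, I would instead apply it on each connected open sector and use that the signature is locally constant on the principal stratum. A secondary technical point is checking that the sector boundaries (degenerate eigenvalues, and nontrivial stabilizers) are $\mathcal{H}^D$-null; being proper real-algebraic subsets, they are.
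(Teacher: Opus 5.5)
Your steps (i)--(iv) are correct and follow the paper's route. The paper also obtains the formula as the product of the factors from Theorems~\ref{thm:relative_volume_shape_Y} and~\ref{thm:relative_volume_shape_X}, justifying the multiplication by noting that $S_n$ acts on Gram matrices by conjugation and so preserves signatures; your explicit bookkeeping (denominator $\frac{\#\mathrm{Inv}(S_n)}{n!}\mathcal{H}^D(X_{\shape}(\mathbb{R})\cap B_X(R))$, numerator $\frac{1}{n!}\mathcal{H}^D(X_{\shape}^+\cap B_X(R))$) makes that multiplication automatic, since the signature factor is only ever applied inside $U_{\mathrm{id}}$.

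Consequently your ``main obstacle'' paragraph is superfluous, and you should drop it rather than rely on it, because for mixed signatures its claims are false. The matrix $\mathrm{diag}(I_p, iI_q)$ commutes only with $O(p)\times O(q)$, not with $O(d,\mathbb{R})$. Moreover, $q(Uv)=v^{\top}\mathrm{diag}(I_p,-I_q)\,v$ is not a function of $v^{\top}v$, so $U$ does not descend to $X_{\shape}$ at all.
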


\begin{proof}
The physical image is the intersection of two conditions: lying in the trivial involution sector ($\sigma=\mathrm{id}$) and having positive semi-definite signature.
Since the action of $S_n$ on the shape space is by conjugation of the Gram matrix, it preserves the eigenvalues. Thus, the spectral condition (signature) is independent of the real-structure condition (involutions). The total relative volume is the product of the individual factors derived in Theorems \ref{thm:relative_volume_shape_Y} and \ref{thm:relative_volume_shape_X}.
\end{proof}

\begin{corollary}[Asymptotic Decay]
\label{cor:Inv_Sn_decay_shape}
Since the number of involutions in the symmetric group grows as $\#\mathrm{Inv}(S_n) \sim n^{n/2}$, the relative volume of the physical configurations decays super-exponentially with the system size:
\[
    \frac{\mu_{Y_{\shape}}\left(L \cap B(R)\right)}{\mu_{Y_{\shape}}\left(B(R)\right)} \approx O\left(e^{-C n \log n}\right).
\]
\end{corollary}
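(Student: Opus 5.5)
The statement to prove is Corollary~\ref{cor:Inv_Sn_decay_shape}. The plan is to reduce it to an asymptotic estimate for $\#\mathrm{Inv}(S_n)$ alone. By Theorem~\ref{thm:relative_volume_real_image_shape}, the relative volume equals exactly $\bigl(2^{\min(d,n)}\cdot\#\mathrm{Inv}(S_n)\bigr)^{-1}$, for every radius $R$. Since $2^{\min(d,n)}\ge 1$, it suffices to show that $1/\#\mathrm{Inv}(S_n)=O(e^{-Cn\log n})$ for some $C>0$. The extra factor $2^{-\min(d,n)}$ only strengthens the decay; for fixed $d$ it is a constant and does not affect the rate. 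Only a \emph{lower} bound on $\#\mathrm{Inv}(S_n)$ is needed.

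For the lower bound, I would use the explicit count already recorded in Corollary~\ref{cor:Inv_Sn_decay}:
\[
\#\mathrm{Inv}(S_n)=\sum_{k=0}^{\lfloor n/2\rfloor}\frac{n!}{2^k\,k!\,(n-2k)!}.
\]
All terms are positive, so we may keep just the single term $k=m:=\lfloor n/2\rfloor$. This term counts fixed-point-free (or near fixed-point-free) perfect matchings. It gives
\[
\#\mathrm{Inv}(S_n)\;\ge\;\frac{n!}{2^m\,m!\,(n-2m)!}\;\ge\;\frac{n!}{2^{n/2}\,(n/2)!}.
\]
Then I would apply Stirling's formula in the crude form $\log k! = k\log k - k + O(\log k)$. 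The logarithm of the bound becomes
\[
n\log n - n - \tfrac{n}{2}\log\tfrac n2 + \tfrac n2 - \tfrac n2\log 2 + O(\log n)=\tfrac12 n\log n - \tfrac n2 + O(\log n).
\]
For large $n$ this is at least $\tfrac14 n\log n$, so $1/\#\mathrm{Inv}(S_n)\le e^{-\frac14 n\log n}$ eventually. Enlarging the implied constant covers the finitely many small $n$. This yields the claim with, for example, $C=1/4$, and any $C<1/2$ works.

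There is no real obstacle here. The only point requiring care concerns the phrase ``$\#\mathrm{Inv}(S_n)\sim n^{n/2}$'' in the statement. The sharp asymptotic is actually $\#\mathrm{Inv}(S_n)\sim (n/e)^{n/2}e^{\sqrt n}/(\sqrt2 e^{1/4})$, so the statement should be read at the logarithmic scale, $\log\#\mathrm{Inv}(S_n)=\tfrac12 n\log n(1+o(1))$. The proof above needs only the lower half of that estimate, and I would phrase the conclusion accordingly.
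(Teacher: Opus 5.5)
Your proposal is correct and follows the same route as the paper, which obtains the corollary by combining the exact ratio $\bigl(2^{\min(d,n)}\,\#\mathrm{Inv}(S_n)\bigr)^{-1}$ from Theorem~\ref{thm:relative_volume_real_image_shape} with the growth of $\#\mathrm{Inv}(S_n)$. Your explicit lower bound, from the $k=\lfloor n/2\rfloor$ term together with Stirling's formula, supplies the direction of estimate that is actually needed; the paper only asserts $\#\mathrm{Inv}(S_n)\sim n^{n/2}$, and in Corollary~\ref{cor:Inv_Sn_decay} it records the upper bound $O(n^{n/2})$, which by itself would not give decay, so your version makes this step rigorous.
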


\begin{remark}[$\tilde{f}_{LJ}$ is Unbounded Below on $Y_{\shape}(\mathbb{R})$]
\label{rem:unbounded_dive}
We explicitly construct a collision-free trajectory that drives the quotient Lennard-Jones potential $\tilde{f}_{LJ}$ to $-\infty$.
Consider a 3-particle system $\mathcal{V} = \mathbb{C}^{3 \times 3}$ we define the twisted real space 
\[  
U_{(12),-I} = \{ x \in \mathcal{V} \mid \overline{x} = - x \cdot (12) \}.
\]
Points in this space have the form:
\[
x_1 = z, \quad x_2 = -\overline{z}, \quad x_3 = 0
\]
Let $z = u + i v$ with $u, v \in \mathbb{R}^3$. 
Then note that $\overline{q\circ \pi (x)} = q\circ \pi (\overline{x}) = q\circ \pi (-x) = q(\pi (x)\cdot(12)) = q\circ \pi (x)$, confirming that $q\circ \pi (U_{(12),-I})\subset Y_{\shape}(\mathbb{R})$. 
This symmetry dictates the squared distances:
Let $r_{ij}^2 = \langle x_i - x_j, x_i - x_j \rangle$. Then: 
\[
r_{13}^2  = \langle z, z \rangle, \quad r_{23}^2 = \langle \overline{z}, \overline{z} \rangle = \overline{r_{13}^2}, \quad r_{12}^2 = 4|u|^2.
\]

Let $u(\rho) = \sqrt{\rho} \cos(\pi/12) \mathbf{e}_1$ and $v(\rho) = \sqrt{\rho} \sin(\pi/12) \mathbf{e}_1$.
Then $x(\rho) = (u(\rho) + i v(\rho), -u(\rho) + i v(\rho), 0)$ 
Then note that:
\[
r_{13}^{-12} = (\rho e^{i\pi/6})^{-6} = \rho^{-6} e^{-i \pi} = -\frac{1}{\rho^6}.
\]
And, 
\[ 
r_{12}^{-12} = (4|u|^2)^{-6} = (4 \rho \cos^2(\pi/12))^{-6} = \frac{C}{\rho^6} \quad \text{where } C = (4 \cos^2(\pi/12))^{-6} < 2.
\]
Therefore, 
\begin{align*}
\text{lim}_{\rho \to 0}V_{LJ}(x(\rho)) &= \text{lim}_{\rho \to 0} \left( r_{13}^{-12} - r_{13}^{-6} + r_{23}^{-12} - r_{23}^{-6} + r_{12}^{-12} - r_{12}^{-6} \right) \\
&= \text{lim}_{\rho \to 0} \left( -\frac{2-C}{\rho^6}  \right) = -\infty.
\end{align*}
\end{remark}

\begin{remark}[Contextualization via Galois Cohomology]
\label{rem:galois_cohomology}
The volume formula derived in Theorem \ref{thm:relative_volume_real_image_shape} for $G = O(d,\mathbb{C}) \times S_n$ can be interpreted as a specific instance of a broader algebraic phenomenon.
For a general reductive group $G$, the real locus of the quotient $(V \sslash G)(\mathbb{R})$ decomposes into disjoint sectors indexed by the Galois cohomology set $H^1(\mathbb{R}, G)$.
The "physical" image $L$ corresponds to the trivial cohomology class, while the complementary sectors arise from non-trivial real forms (twists) of the representation.

The metric weight of these sectors depends on the geometry of the defining cocycles.
With respect to a natural invariant measure, the volume of a sector $Y_\alpha$ is proportional to the volume of the conjugacy class associated with $\alpha$ within the maximal compact subgroup $K \subset G$.
Our result illustrates two distinct regimes of this principle:
\begin{enumerate}
    \item \textbf{Geometric Equipartition ($O(d)$):} The factor $2^{-\min(d,n)}$ arises because the relevant twisted conjugacy classes in the compact form are geometrically equivalent (via unitary transformations), leading to equal volumes for all signatures.
    \item \textbf{Combinatorial Weighting ($S_n$):} The factor $(\#\mathrm{Inv}(S_n))^{-1}$ arises because the "volume" corresponds to the cardinality of conjugacy classes. Here, the physical sector (the identity) has size 1, which is negligible compared to the large classes of involutions.
\end{enumerate}
This framework suggests that "metric rarity" is to be expected in general reductive groups whenever the cohomology set is large and the trivial class does not dominate the measure in the compact form.
\end{remark}

\section{Unified Algebraic Interpretation of Empirical Symmetry}
\label{sec:empirical_implications}

The prevalent symmetry observed in \cite{schneider2025} and \cite{arjevani2019, arjevani2021} provides a comprehensive empirical validation of ``Regime I''. In these experimental settings, the optimization landscape is governed by the metric rarity of the real image $L \subset Y(\mathbb{R})$. As established in Section \ref{subsec:metric_rarity}, the ``interior'' of the quotient space (corresponding to asymmetric configurations) occupies negligible volume. Consequently, generic critical points are statistically constrained to the boundary strata $\partial L$, necessitating non-trivial stabilizers.

To demonstrate the universality of this mechanism, we map the diverse experimental environments of \cite{schneider2025} 
and the deep learning landscapes studied by Arjevani et al. directly to our linear $G$-variety 
framework $(X, G)$ in Table \ref{tab:mapping}.

\begin{table}[h]
\centering
\caption{Mapping Empirical Setups to the $G$-Variety Framework}
\label{tab:mapping}
\begin{tabular}{@{}lll@{}}
\toprule
\textbf{Experiment} & \textbf{Configuration Space} $X(\mathbb{R})$ & \textbf{Acting Group} $G$ \\ \midrule
Projective Optimization \cite{schneider2025} & Functions $\mathbb{R}^{\mathbb{P}^n(\mathbb{F}_q)}$ & $PGL_{n+1}(\mathbb{F}_q)$  \\
Octahedral Graph \cite{schneider2025} & Vertex Valuations $\mathbb{R}^{6}$ & $\text{Aut}(\Gamma) \cong S_4 \times C_2$  \\
Perfect Matching \cite{schneider2025} & Vertex Valuations $\mathbb{R}^{|V|}$ & $\text{Aut}(\Gamma)$  \\
Particle Systems ($I_V$) \cite{schneider2025} & Configurations $(\mathbb{R}^d)^n$ & $S_n$  \\ 
Neural Nets / Tensors \cite{arjevani2019, arjevani2021} & Weight Matrices $\mathbb{R}^{k\times d}$ & $S_k \times S_d$ (Row/Col Perm.) \\ \bottomrule
\end{tabular}
\end{table}

A central finding in \cite{schneider2025} is the persistence of symmetry even when the standard stabilizer $I_V$ is trivial (e.g., in repulsive potentials). This phenomenon, quantified by the \textbf{Edge Isotropy} $I_E$, admits a geometric interpretation as the stabilizer within the \textit{Shape Space}.

While $I_V(x)$ is the stabilizer of the configuration $x \in \mathcal{V} = (\mathbb{C}^d)^n$ under the linear action of $S_n$, the potential depends only on intrinsic distances, descending to the quotient variety $X_{\shape} = \mathcal{V} \sslash O(d,\mathbb{C})$. We identify the Edge Isotropy group as the stabilizer of the \emph{image} of $x$ in this intermediate quotient:
\[
I_V(x) = \text{Stab}_{S_n}(x) \quad \subseteq \quad I_E(x) \cong \text{Stab}_{S_n}(q(x)),
\]
where $q: \mathcal{V} \to X_{\shape}$ is the quotient map by global motions.

The empirical observation that $I_E$ remains non-trivial even when $I_V$ vanishes confirms our geometric rarity hypothesis. Even if a potential (e.g., $1/r^2$) is singular at the collision locus—preventing critical points on the boundary of $L\subset (\mathcal{V}/S_n)(\mathbb{R})$ (where $I_V \neq \{e\}$)—the system settles on the boundary of the shape space $L\subset Y_{\shape}(\mathbb{R})$. The metric rarity of the real image in the final quotient $Y_{\shape} = X_{\shape} \sslash S_n$ (Theorem \ref{thm:relative_volume_real_image_shape}) is sufficient to statistically favor symmetric shapes with non trivial symmetry ($I_E \neq \{e\}$). Moreover, the additional volume factor of $1/2^{\min(d,n)}$ in Theorem \ref{thm:relative_volume_real_image_shape} explains the consistent empirical strict inclusion $I_V(x) < I_E(x)$ reported in \cite{schneider2025}.

\section{Conclusion and Future Work}
\label{sec:conclusion}
Throughout this work, we have adopted the following general setting: an irreducible complex affine algebraic variety $X$ defined over $\mathbb{R}$, equipped with a faithful action of a finite group $G$ by real automorphisms, and a $G$-invariant regular function $f\in \mathbb{R}[X]^G$ viewed as an energy landscape on the real locus $X(\mathbb{R})$. 
The central problem addressed herein is to determine conditions under which critical points---and in particular global minimizers---exhibit a non-trivial stabilizer. 
Across the families of models considered here, we observed two recurring empirical regimes: in Regime~I almost all critical 
points exhibit a non-trivial stabilizer, while in Regime~II, a distinct energetic ordering emerges: high-symmetry configurations are systematically concentrated at the bottom of the landscape, with the deepest minima exhibiting larger stabilizer subgroups than the bulk.

Instead of working directly on the configuration space $X(\mathbb{R})$, we study the descended potential $\tilde f$ on the quotient $Y=X\sslash G$ and its restriction to the \emph{physical real image}
\[
L=\pi\big(X(\mathbb{R})\big)\subset Y(\mathbb{R}).
\]
Our central structural point is that $L$ is typically \emph{metrically rare} inside the ambient real quotient. 
This rarity has a precise algebro-geometric origin: the preimage of the real locus $Y(\mathbb{R})$ decomposes into a union of real forms indexed by involutions,
\[
\pi^{-1}\big(Y(\mathbb{R})\big)\cong \bigcup_{\sigma\in\mathrm{Inv}(G)} X_\sigma(\mathbb{R}),
\]
whereas the physically accessible component corresponds only to the image of the \emph{trivial} real structure. 
As the number of involutions grows, the relative size of the physical image shrinks. 

This geometric constraint offers a unified explanation for Regime I. When $L$ is metrically rare, the critical points of the descended function $\tilde{f}$ are statistically unlikely to lie within the smooth locus of the physical domain $L$. Consequently, the critical points of $f$ arise strictly from the singularities of the quotient map $\pi$ (where the differential $d\pi$ is not surjective). We showed that these singularities coincide exactly with the locus of symmetric configurations in $X$; thus, in this regime, critical points are forced to be symmetric.

For \textbf{Regime II}, we proposed the ``Active Constraint'' hypothesis. We posit that even when the descended potential $\tilde{f}$ is sufficiently complex to exhibit a multimodal landscape within the interior $L^\circ$, the global minimum appears to be driven to the boundary $\partial L$ by a coherent global gradient of $\tilde{f}$ that does not vanish in the interior.
We found strong empirical support for this mechanism in Lennard-Jones clusters, where the well-documented ``funnel'' topology effectively directs the system toward a \textbf{highly symmetric global minimum}. In our geometric interpretation, this implies that the global gradient on the quotient space channels the trajectory specifically into singular strata of high codimension of the boundary $\partial L$.

\paragraph{Limitations.}
The interpretation of Regime~I depends on identifying the correct \emph{measure} governing critical points; our estimates use the metric descended from the Hermitian metric on $X$ as a proxy, but a stratified Kac--Rice theory for a given random model on $L$ would yield a principled density. 

\paragraph{Future directions.}
We envision four primary avenues for extending this geometric framework:

\begin{itemize}
  \item \textbf{Rigorous Formulation of Global Gradients.}
    While we observed empirically that random $S_n$ invariant polynomials exhibit higher average symmetry at energetic extremes, a theoretical explanation for this phenomenon remains to be established. 
    Future work is required to rigorously formulate the definition of a ``global gradient'' within the context of random algebraic landscapes, and to verify whether such gradients naturally emerge in generic ensembles restricted to the real image $L$. 
    Confirming this would identify the global gradient as the causal factor driving the statistical preference for high symmetry at the boundaries.

    \item \textbf{The Ambient Landscape Topology.} A natural question is to what extent the ``funnel'' topology characteristic of Lennard-Jones clusters is an intrinsic feature of the algebraic potential $\tilde{f}$ on the entire ambient space $Y(\mathbb{R})$.
We acknowledge that since $\tilde{f}$ is unbounded below on the ambient quotient (as shown in Remark \ref{rem:unbounded_dive}), formulating this problem rigorously requires imposing appropriate bounds or regularization.
Nevertheless, the fundamental question remains: if one were to analyze the landscape on this extended domain, ignoring the specific geometric constraints of the real image, would the funnel structure persist? Or would the landscape degenerate into a ``glassy'' structure characterized by frustration and a large number of competing minima?

\item \textbf{Crystallization and Stratum Dominance.} 
    Our geometric identification of symmetry with the boundary of the real image $L \subset Y(\mathbb{R})$ offers a novel perspective on the symmetry selection observed in crystallization.
    Rather than inquiring which symmetric structures possess the intrinsic physical stability to host a global minimum, we propose a geometric inversion of the problem: we ask instead which \emph{boundary wall} of the domain $L$ is topologically positioned to intercept the "funnel" of the ambient potential $\tilde{f}$.
    In this view, the specific point-group selection is dictated by the interaction between the global gradient drift of the potential and the stratification of $\partial L$.
    For instance, Theorem~\ref{thm:boundary_parity} establishes that strata corresponding to odd-order stabilizers map to the interior $L^\circ$. 
    We propose, as a \textbf{first-order heuristic}, that this topological placement provides the geometric rationale for their exclusion: under the Active Constraint mechanism—where the global gradient drives the system outwards—we expect these symmetries to be disfavored simply because they do not constitute a "boundary wall" capable of arresting the gradient's descent.
    Likewise, we suggest formulating criteria to predict which boundary strata are \emph{dominant}, thereby providing a geometric derivation for the specific symmetries observed in crystallized matter.

  \item \textbf{Beyond Group Actions: General Morphisms and Deep Learning.} 
      The prevalence of symmetry in neural networks has been rigorously documented in what we term \textbf{Regime I}. Arjevani et al.\ \cite{arjevani2019} showed that loss functions of shallow ReLU networks systematically preserve the permutation symmetries of hidden neurons, rendering symmetry prevalent. 
      Empirical analyses of machine learning energy landscapes \cite{ballard2017energy} reveal a ``funnel-like'' topography analogous to the single-funnel landscapes of physical systems like Lennard-Jones clusters, which facilitates efficient relaxation to low-energy states.
      To provide a unified geometric explanation for these phenomena in Deep Learning, we propose looking beyond group quotients.
      The phenomenon of real-image compression is not specific to quotients; for a general subjective finite morphism $\phi:X\to Y$ defined over $\mathbb{R}$, we expect the 2-primary part of the degree $\deg(\phi)$ to control the extent to which the real image $\phi(X(\mathbb{R}))$ is ``squeezed'' inside $Y(\mathbb{R})$. 
      This suggests a speculative geometric lens for analyzing neural network loss landscapes: if a loss function on weight space factors (even approximately) as $f=\tilde f\circ \phi$, where $\phi$ quotients out built-in symmetries and structural dependencies, then optimization is effectively constrained to the real image $L_\phi:=\phi(X(\mathbb{R}))$. 
      Successive ``folds'' of $L_\phi$ could then offer a geometric route to ``deep singularities,'' constraining the feasible real locus and pushing global minima toward increasingly singular corners, in direct analogy with the Lennard-Jones phenomenon studied here.
\end{itemize}

\section*{Acknowledgments}

I would like to express my deepest gratitude to David Kazhdan for his unwavering mentorship and guidance throughout this research. His constant availability, his patience in answering countless questions, and his readiness to discuss any ideas were invaluable. This work would not have been possible without his support.

I am deeply indebted to Yossi Arjevani, my former advisor, for originally introducing me to the fundamental questions regarding symmetry and for the numerous insightful discussions we shared. His guidance was fundamental to the initiation of this research.

I also thank Shmuel Weinberger, Eliran Subag, and David Wales for many enlightening discussions and valuable insights.

I am especially grateful to Will Sawin for his significant contribution to the geometric analysis in Section 4, and specifically for providing the precise formulation and proof of Theorem \ref{thm:rarity_real_image_involutions}.
% ============================================================
% Appendix: PSC-based counting calibration for Regime I
% (Insert this near the end of the paper, before the bibliography)
% ============================================================

\appendix

\section{A Principle of Symmetric Criticality-based counting}
\label{app:capacity}

This appendix records a simple counting baseline for the prevalence of symmetric critical points.
Its role is \emph{calibrational}: it explains why, in certain parameter ranges (notably moderate degree and/or strong symmetry),
one should \emph{already expect} many symmetric critical points purely from the combination of
(i) the Principle of Symmetric Criticality (Theorem~\ref{thm:PSC}) and
(ii) crude estimates for the expected number of real solutions of polynomial systems (the ``square-root'' scaling, e.g.\ \cite{EdelmanKostlan1995}).
We emphasize from the outset that this perspective concerns \emph{counts} rather than \emph{energetic ordering};
in particular it is silent about Regime~II (concentration of \emph{higher} symmetry at energetic extremes).
Moreover, even for Regime~I it does not explain the \emph{mechanism}: a counting balance may predict that few (or no) asymmetric critical points
should exist in a given parameter regime, but it does not explain \emph{how} an invariant polynomial ``avoids'' the open dense asymmetric locus.
The quotient-geometric mechanism developed in the main text addresses this missing ``how'' and is essential for the heuristic explanation of Regime~II.

\subsection*{Setup and heuristic}
Let $k,n\in\N$ and set
\[
X(\R) := (\R^k)^n \cong \R^{kn}.
\]
We write $x=(x_1,\dots,x_n)$ with $x_i\in\R^k$.
Let $G:=S_n$ act on $X(\R)$ by permuting labels:
\[
(\sigma\cdot x)_i := x_{\sigma(i)},\qquad \sigma\in S_n.
\]
Let $f:X(\R)\to\R$ be an $G$-invariant polynomial of degree $d$.

For any transposition $(ij)\in S_n$, the fixed-point locus
\[
\Delta_{ij}:=\{x\in X(\R)\mid x_i=x_j\}
\]
is a symmetric stratum (a special case of Definition~\ref{def:fixed_point_space}).
By the Principle of Symmetric Criticality (Theorem~\ref{thm:PSC}), any critical point of $f|_{\Delta_{ij}}$ is a critical point of $f$ on all of $X(\R)$.

We now invoke a standard heuristic from random real algebraic geometry:
for a ``generic'' polynomial system in $m$ real variables with degrees comparable to $D$ (e.g.\ Kostlan/KSS-type ensembles),
the expected number of \emph{real} solutions scales like $D^{m/2}$ (the ``square-root'' scaling; cf.\ \cite{EdelmanKostlan1995}).
Applied to the gradient system, this suggests
\[
\#\text{Crit}_\R(f)\ \asymp\ (d-1)^{kn/2}.
\]
Restricting to $\Delta_{ij}$ reduces the dimension by $k$, hence
\[
\#\text{Crit}_\R(f|_{\Delta_{ij}})\ \asymp\ (d-1)^{(kn-k)/2}=(d-1)^{k(n-1)/2}.
\]
Summing over the $\binom{n}{2}$ pair-collision strata yields the leading-order symmetric contribution
\[
\#\text{Crit}_\R^{\mathrm{sym}}(f)\ \asymp\ \binom{n}{2}(d-1)^{k(n-1)/2},
\]
and therefore the symmetric-to-total ratio satisfies the heuristic scaling
\begin{equation}
\label{eq:capacity_ratio}
\frac{\#\text{Crit}_\R^{\mathrm{sym}}(f)}{\#\text{Crit}_\R(f)}
\ \asymp\
\frac{\binom{n}{2}}{(d-1)^{k/2}}
\ =\
\frac{n(n-1)}{2(d-1)^{k/2}}.
\end{equation}
In particular, for fixed $n$, this ratio tends to $0$ as either $d\to\infty$ or $k\to\infty$.

\subsection*{Consequence for $k=1$: a quartic-in-$n$ degree threshold}
In the one-dimensional model $X(\R)=\R^n$ ($k=1$), \eqref{eq:capacity_ratio} becomes
\[
\frac{\#\text{Crit}_\R^{\mathrm{sym}}(f)}{\#\text{Crit}_\R(f)}
\ \asymp\ \frac{n(n-1)}{2\sqrt{d-1}}.
\]
A natural benchmark for observing a substantial asymmetric population is that symmetric and asymmetric critical points are comparable,
i.e.\ the symmetric fraction is on the order of $1/2$. Setting the right-hand side to $1/2$ gives
\[
\frac{n(n-1)}{2\sqrt{d-1}} \approx \frac12
\quad\Longleftrightarrow\quad
\sqrt{d-1}\approx n(n-1)
\quad\Longleftrightarrow\quad
d \approx 1+n^2(n-1)^2.
\]
Thus, to enter an ``$\sim 50\%$ asymmetric'' regime one requires
\[
d=\Theta(n^4),
\]
which is already very large for moderate $n$.

\subsection*{Empirical calibration: $(k,n,d)=(2,4,14)$}
For $k=2$ and $n=4$, \eqref{eq:capacity_ratio} yields
\[
\frac{\#\text{Crit}_\R^{\mathrm{sym}}(f)}{\#\text{Crit}_\R(f)}
\ \asymp\ \frac{\binom{4}{2}}{(d-1)}
\ =\ \frac{6}{d-1}.
\]
At $d=14$ this predicts a symmetric fraction of $\approx 6/13\approx 0.46$ (hence an asymmetric fraction of $\approx 0.54$).

To compare this heuristic with a simple numerical benchmark, we ran the following experiment.
We repeated $100$ independent runs; in each run we sampled a fresh degree-$14$ $S_4$-invariant polynomial and applied a damped Newton method from a random initialization, recording the single converged critical point returned by the solver. We then classified this point as asymmetric (trivial stabilizer) or symmetric. The returned critical point was asymmetric in $42\%$ of runs, broadly consistent with the crossover-scale prediction.

\section{Additional Experiments: Robustness of the Regime II Phenomenon}
\label{app:boundary_experiments}

This appendix presents additional numerical experiments that corroborate the existence of \textbf{Regime II}—the energetic preference for higher symmetry at global minima—within random $S_n$-invariant polynomials (as illustrated in Figure~\ref{fig:distinct_values}).

\subsection*{Methodology and Sampling}
To quantify symmetry, we utilize the same score defined in the main text: for a critical point $x\in\R^n$, we record the \emph{number of distinct coordinate values} (where fewer distinct values imply a larger stabilizer and higher symmetry).
For each sampled polynomial, we detect critical points, sort them by energy, and normalize their index to $t\in[0,1]$. The reported profiles are averages over the sampled ensemble.

\paragraph{Numerical tolerances (convergence and symmetry).}
Unless stated otherwise, throughout Appendix~\ref{app:boundary_experiments} (and in the main Figure~\ref{fig:distinct_values}) we use the following fixed tolerances.
First, we declare that a Newton trajectory has converged to a (numerical) critical point when the gradient norm falls below
\[
\|\nabla f(x)\| < \varepsilon,
\qquad \varepsilon = 10^{-1}.
\]
For constrained runs on a manifold (e.g.\ $\|x\|=1$ or $\|e(x)\|=1$), we apply the same rule to the \emph{tangent-projected} gradient, i.e.\ $\|\nabla_T f(x)\|<\varepsilon$.
Second, to compute the symmetry score $D(x)$, we treat two coordinates as equal when $|x_i-x_j|\le \delta$ with
\[
\delta = 10^{-2}.
\]
Operationally, we sort the coordinates of $x$ and set $D(x)=1+\#\{i:\,x_{(i+1)}-x_{(i)}>\delta\}$.
The same tolerance $\delta$ is also used when deduplicating numerically detected critical points up to permutation symmetry.

All experiments utilize the \textbf{non-homogeneous KSS ensemble} (Definition~\ref{def:nhkss}).

\begin{definition}[Non-homogeneous KSS ensemble]
\label{def:nhkss}
Fix $m,d\in\N$ and write $z=(z_1,\dots,z_m)$. The (non-homogeneous) Kostlan--Shub--Smale (KSS) ensemble consists of random polynomials
\[
R(z)=\sum_{|\alpha|\le d} \xi_\alpha\, \sqrt{\binom{d}{\alpha}}\, z^\alpha,
\qquad \xi_\alpha\sim\mathcal{N}(0,1)\ \text{i.i.d.},
\]
where for a multi-index $\alpha=(\alpha_1,\dots,\alpha_m)\in\N^m$ we set $z^\alpha=\prod_{i=1}^m z_i^{\alpha_i}$, $|\alpha|=\sum_{i=1}^m\alpha_i$, and
\[
\binom{d}{\alpha}:=\frac{d!}{\alpha_1!\cdots\alpha_m!\,(d-|\alpha|)!}.
\]
\end{definition}

\subsection*{Symmetrization Schemes}
We investigate two distinct constructions to impose $S_n$-invariance on the random landscape.

\begin{definition}[Reynolds symmetrization]
\label{def:reynolds_symmetrization}
Given a polynomial $Q:\R^n\to\R$, its Reynolds symmetrization (with respect to the permutation action of $S_n$) is
\[
f(x)=\frac{1}{n!}\sum_{\sigma\in S_n} Q(\sigma\cdot x).
\]
In our experiments, $Q$ is sampled from the non-homogeneous KSS ensemble (Definition~\ref{def:nhkss}) with $m=n$.
\end{definition}

\begin{definition}[Quotient-KSS model (quotient composition)]
\label{def:quotient_kss_model}
Let $\pi(x)=(e_1(x),\dots,e_n(x))$ be the elementary-symmetric quotient map $\pi:\R^n\to\R^n$. Sample a polynomial $P:\R^n\to\R$ from the non-homogeneous KSS ensemble (Definition~\ref{def:nhkss}) on the quotient variables $y$, and define
\[
f(x)=P\big(\pi(x)\big)=\tilde f\big(\pi(x)\big).
\]
\end{definition}

\subsection*{Robustness Checks: Coercivity and Domain Constraints}
To ensure that the preference for symmetry is a structural geometric feature rather than an artifact of the polynomial definition, we introduced three types of "blocking" or constraints.

\subsubsection*{Type A: Physical Coercivity (Configuration Space Bounding)}
To mimic physical distance-based potentials (e.g., Lennard-Jones), which are bounded below, we add a coercive term in the configuration space. The modified potential takes the form $f(x) + c\|x\|^{2k}$, where $2k$ is the smallest even integer exceeding $d$. This ensures that the global minimum is not at infinity and resembles physical energy landscapes.

\subsubsection*{Type B: Quotient Space Bounding (Isolating Boundary Bias)}
A potential concern is that $\tilde{f}$ is generally unbounded on the full quotient space $Y(\R)$ (see Remark \ref{rem:unbounded_dive}). One might suspect that the observed boundary bias is merely driven by the system "escaping" to $-\infty$ in $Y(\R)$.
To isolate the tendency towards the boundary from the depth of the potential, we impose the lower bound directly on $\tilde f$ in the quotient space:
\[
\tilde{f}_{bounded}(y) = \tilde{f}(y) + c\|y\|^{2k}.
\]
As shown in Figure~\ref{fig:boundary_appendix_quotient} and Table~\ref{tab:boundary_experiments_summary}, the low-energy tail remains systematically more symmetric even when the potential is coercive on $Y(\R)$, ruling out the "escape to infinity" hypothesis.

\subsubsection*{Type C: Spherical Constraint (Isolating Radial Preference)}
To test whether the boundary effect is driven purely by the radial distance in the quotient, we performed experiments constrained to the ES-sphere:
\[
\|e(x)\|^2=\sum_{k=1}^n e_k(x)^2 = 1.
\]
This effectively "blocks" the radial direction, isolating the angular distribution of critical points.
\begin{itemize}
    \item For $n=2$, the constraint defines a circle in $Y(\R)\cong\R^2$. The distinct-values profile becomes nearly flat, the boundary effect vanishes.
    \item For $n=3$, the constraint defines a sphere in $Y(\R)\cong\R^3$. Here, the low-energy tail \emph{still} shows increased symmetry (see Figure~\ref{fig:boundary_appendix_quotient_sphere}), confirming that the geometric preference for singular strata persists even when radial degrees of freedom are removed.
\end{itemize}

\begin{table}[!htbp]
\centering
\caption{A compact summary of boundary preference. For a critical point $x\in\R^n$, let $D(x)$ be the number of distinct coordinate values. ``Dist.'' reports the empirical distribution of $D(x)$ across all detected points (aggregated over polynomials). ``Avg'' is the mean distinct-values score averaged over all $t$ (per polynomial). The columns $t=0$, $t=0.5$, and $t=1$ report the averages of $D(x)$ at the lowest-energy, median-index, and highest-energy detected critical points within each polynomial. Smaller values indicate higher symmetry.}
\label{tab:boundary_experiments_summary}
{\small
\begin{tabular}{lll p{4.8cm} rrrr}
\toprule
Construction & $(n,d)$ & Coercive term & Dist.\ $D(x)$ & Avg & $t=0$ & $t=0.5$ & $t=1$ \\
\midrule
Reynolds & $(2,10)$ & $\|x\|^{12}$         & {\scriptsize 1: 16.30\%, 2: 83.70\%} & 1.74 & 1.58 & 1.75 & 1.63 \\
Reynolds & $(2,10)$ & $1000\|x\|^{12}$     & {\scriptsize 1: 19.44\%, 2: 80.56\%} & 1.61 & 1.36 & 1.69 & 1.62 \\
Reynolds & $(3,3)$  & $\|x\|^{4}$          & {\scriptsize 1: 5.50\%, 2: 31.41\%, 3: 63.09\%} & 2.13 & 1.37 & 2.23 & 2.02 \\
Reynolds & $(4,4)$  & $\|x\|^{6}$          & {\scriptsize 1: 3.56\%, 2: 17.99\%, 3: 37.09\%, 4: 41.36\%} & 2.53 & 1.47 & 2.69 & 2.30 \\
Reynolds & $(5,3)$  & $\|x\|^{4}$          & {\scriptsize 1: 1.71\%, 2: 7.04\%, 3: 18.17\%, 4: 33.70\%, 5: 39.39\%} & 2.78 & 1.48 & 2.88 & 2.87 \\
Quotient & $(2,10)$ & none                 & {\scriptsize 1: 12.52\%, 2: 87.48\%} & 1.82 & 1.30 & 1.90 & 1.31 \\
Quotient & $(2,10)$ & $\|y\|^{12}$         & {\scriptsize 1: 8.74\%, 2: 91.26\%} & 1.88 & 1.89 & 1.92 & 1.79 \\
Quotient & $(2,20)$ & none                 & {\scriptsize 1: 11.88\%, 2: 88.12\%} & 1.84 & 1.48 & 1.91 & 1.47 \\
Quotient & $(3,3)$  & $\|y\|^{4}$          & {\scriptsize 1: 2.58\%, 2: 28.87\%, 3: 68.55\%} & 2.53 & 2.10 & 2.59 & 2.14 \\
Quotient & $(3,5)$  & $\|y\|^{6}$          & {\scriptsize 1: 3.01\%, 2: 31.89\%, 3: 65.10\%} & 2.51 & 2.30 & 2.58 & 2.19 \\
Quotient & $(3,10)$ & none                 & {\scriptsize 1: 5.05\%, 2: 35.28\%, 3: 59.67\%} & 2.46 & 1.70 & 2.61 & 1.68 \\
Quotient & $(2,8)$  & none ($\|e\|=1$)      & {\scriptsize 1: 9.34\%, 2: 90.66\%} & 1.88 & 1.88 & 1.88 & 1.88 \\
Quotient & $(3,8)$  & none ($\|e\|=1$)      & {\scriptsize 1: 1.36\%, 2: 25.49\%, 3: 73.14\%} & 2.65 & 2.39 & 2.67 & 2.45 \\
Quotient & $(4,4)$  & none                 & {\scriptsize 1: 2.72\%, 2: 19.25\%, 3: 40.86\%, 4: 37.17\%} & 2.97 & 1.69 & 3.18 & 1.68 \\
Quotient & $(4,4)$  & $\|y\|^{6}$          & {\scriptsize 1: 1.75\%, 2: 19.77\%, 3: 42.66\%, 4: 35.83\%} & 2.97 & 2.28 & 3.09 & 2.38 \\
Quotient & $(6,4)$  & $\|y\|^{6}$          & {\scriptsize 1: 1.35\%, 2: 14.08\%, 3: 29.18\%, 4: 29.71\%, 5: 18.86\%, 6: 6.83\%} & 3.43 & 2.34 & 3.55 & 2.59 \\
\bottomrule
\end{tabular}
}
\end{table}
\FloatBarrier

\begin{figure}[p]
\centering
\begin{subfigure}[t]{0.48\linewidth}
\centering
\includegraphics[width=\linewidth]{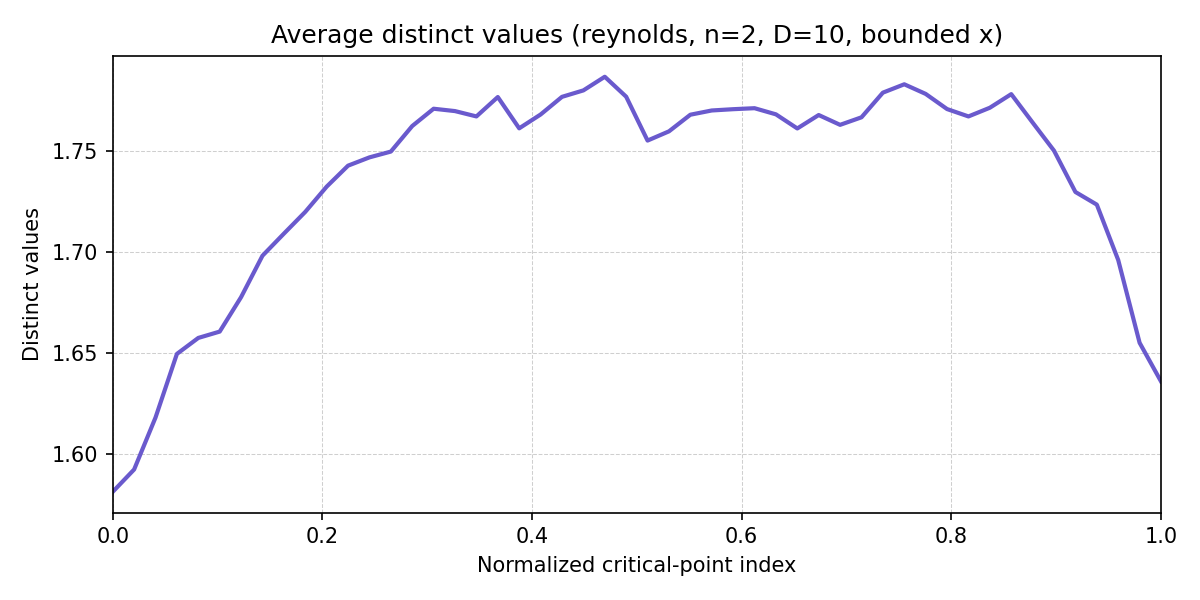}
\caption{Reynolds, $n=2$, $d=10$ (bounded, $+\|x\|^{12}$).}
\end{subfigure}
\hfill
\begin{subfigure}[t]{0.48\linewidth}
\centering
\includegraphics[width=\linewidth]{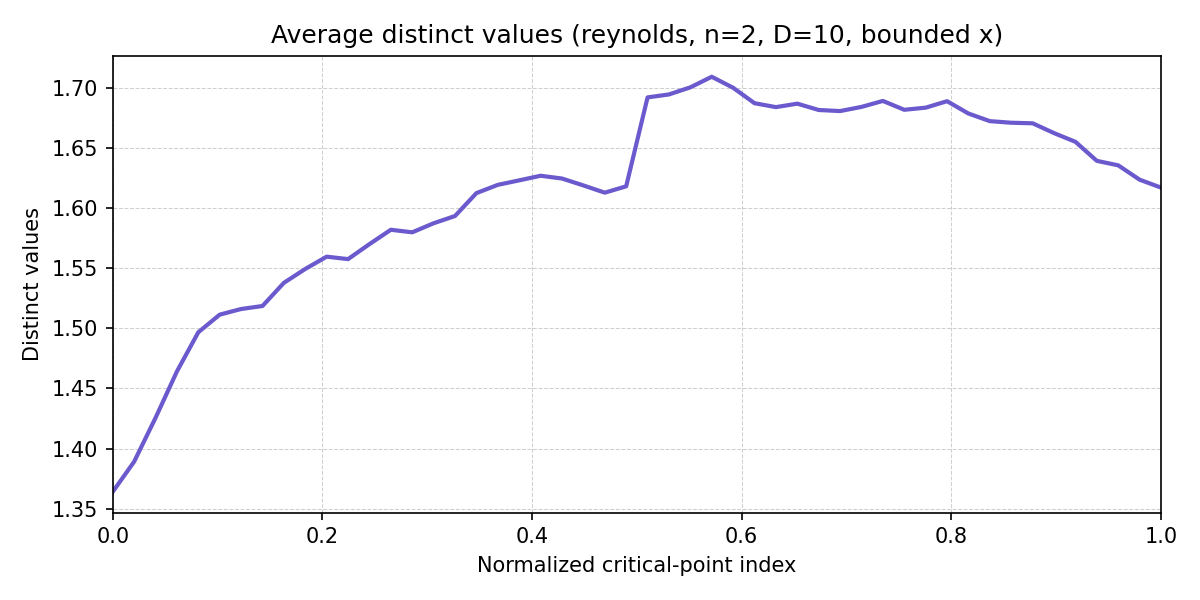}
\caption{Reynolds, $n=2$, $d=10$ (bounded, $+1000\|x\|^{12}$).}
\end{subfigure}

\vspace{0.5em}
\begin{subfigure}[t]{0.48\linewidth}
\centering
\includegraphics[width=\linewidth]{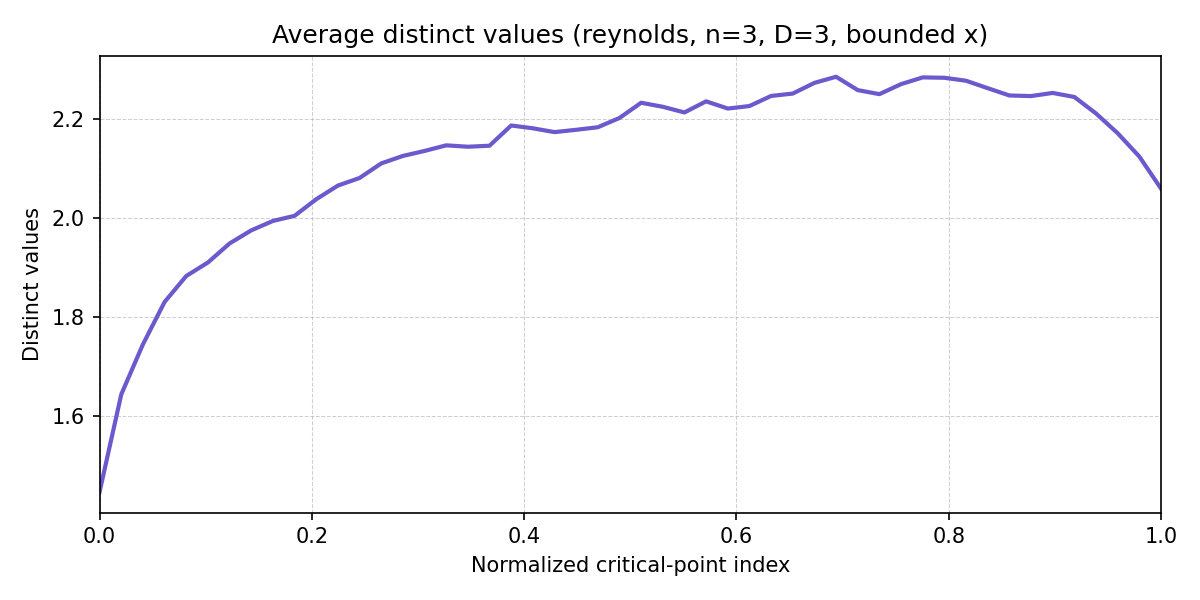}
\caption{Reynolds, $n=3$, $d=3$ (bounded, $+\|x\|^{4}$).}
\end{subfigure}
\hfill
\begin{subfigure}[t]{0.48\linewidth}
\centering
\includegraphics[width=\linewidth]{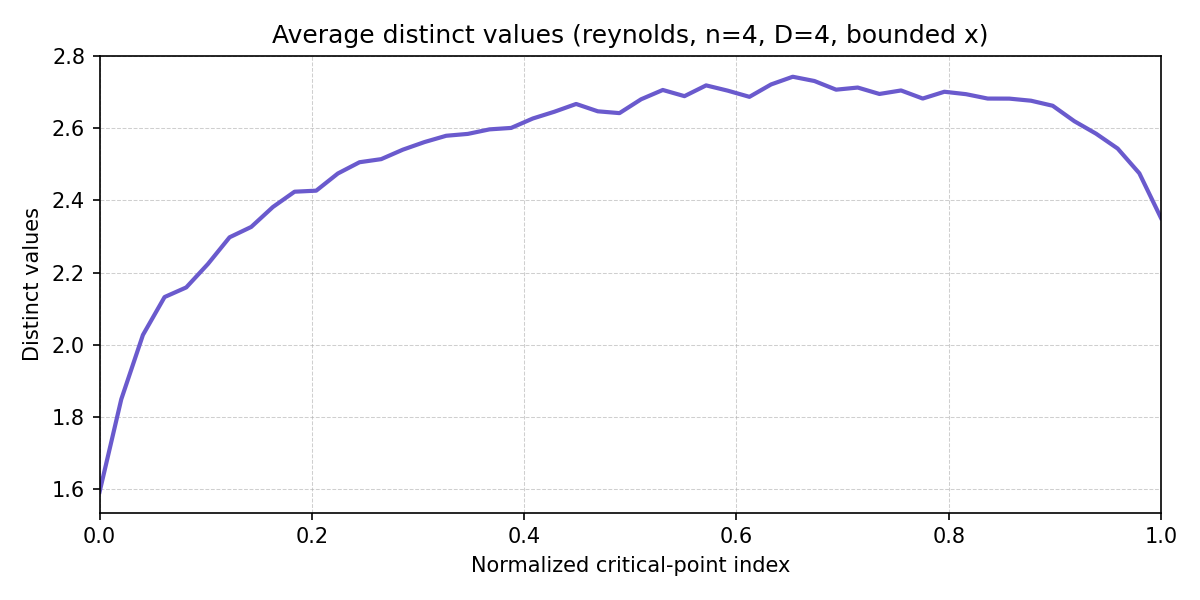}
\caption{Reynolds, $n=4$, $d=4$ (bounded, $+\|x\|^{6}$).}
\end{subfigure}

\vspace{0.5em}
\hfill
\begin{subfigure}[t]{0.48\linewidth}
\centering
\includegraphics[width=\linewidth]{distinct_values_average_continuous_reynolds_n_5_D_3_lb4Bounded_c1.png}
\caption{Reynolds, $n=5$, $d=3$ (bounded, $+\|x\|^{4}$).}
\end{subfigure}
\hfill
\caption{Robust boundary preference when invariance is imposed via Reynolds symmetrization across multiple $(n,d)$ and coercive strengths. In each panel, energetic extremes exhibit fewer distinct coordinate values than the bulk, indicating higher symmetry near the boundary strata.}
\label{fig:boundary_appendix_reynolds}
\end{figure}

\begin{figure}[p]
\centering
\begin{subfigure}[t]{0.48\linewidth}
\centering
\includegraphics[width=\linewidth]{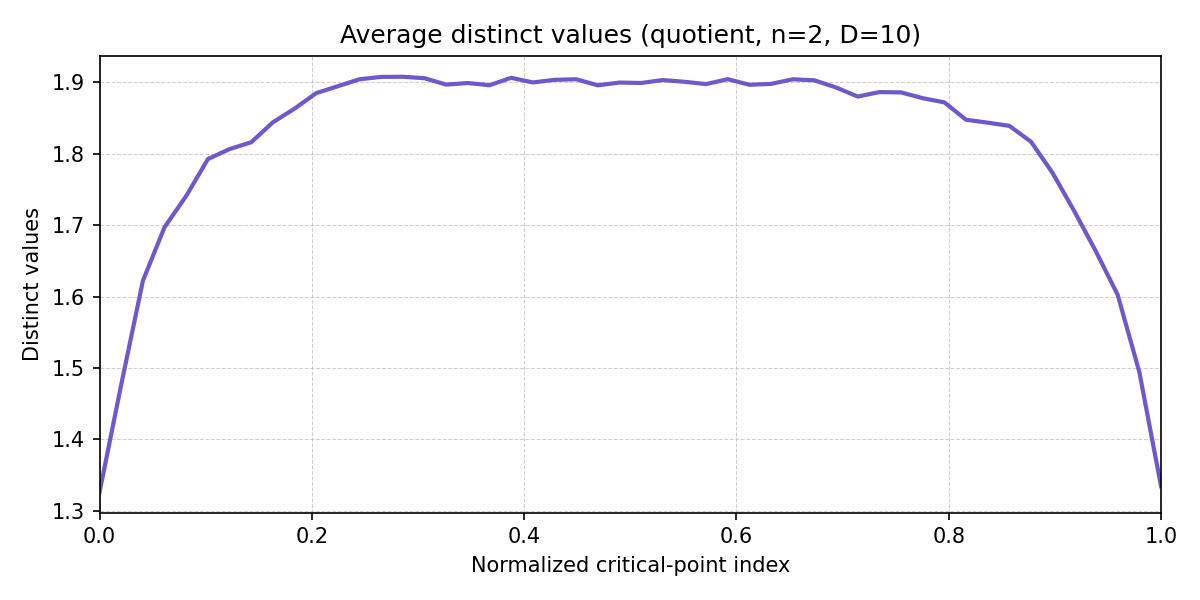}
\caption{Quotient, $n=2$, $d=10$ (unbounded).}
\end{subfigure}
\hfill
\begin{subfigure}[t]{0.48\linewidth}
\centering
\includegraphics[width=\linewidth]{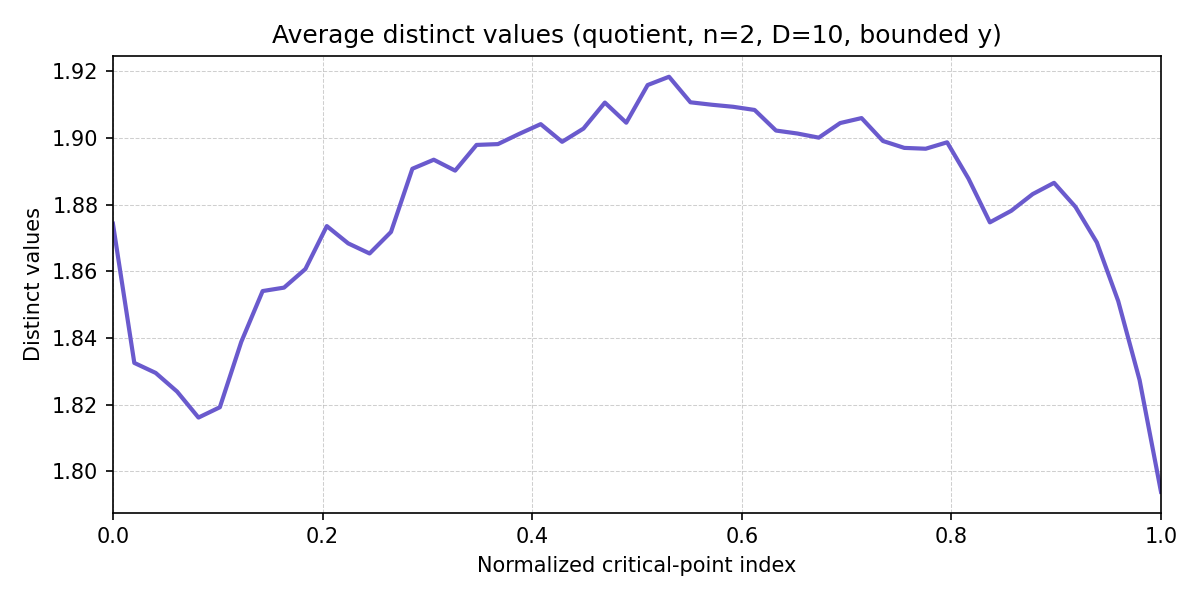}
\caption{Quotient, $n=2$, $d=10$ (bounded on $Y$ via $+\|y\|^{12}$).}
\label{fig:exception_quotient_n2}
\end{subfigure}

\vspace{0.5em}
\begin{subfigure}[t]{0.48\linewidth}
\centering
\includegraphics[width=\linewidth]{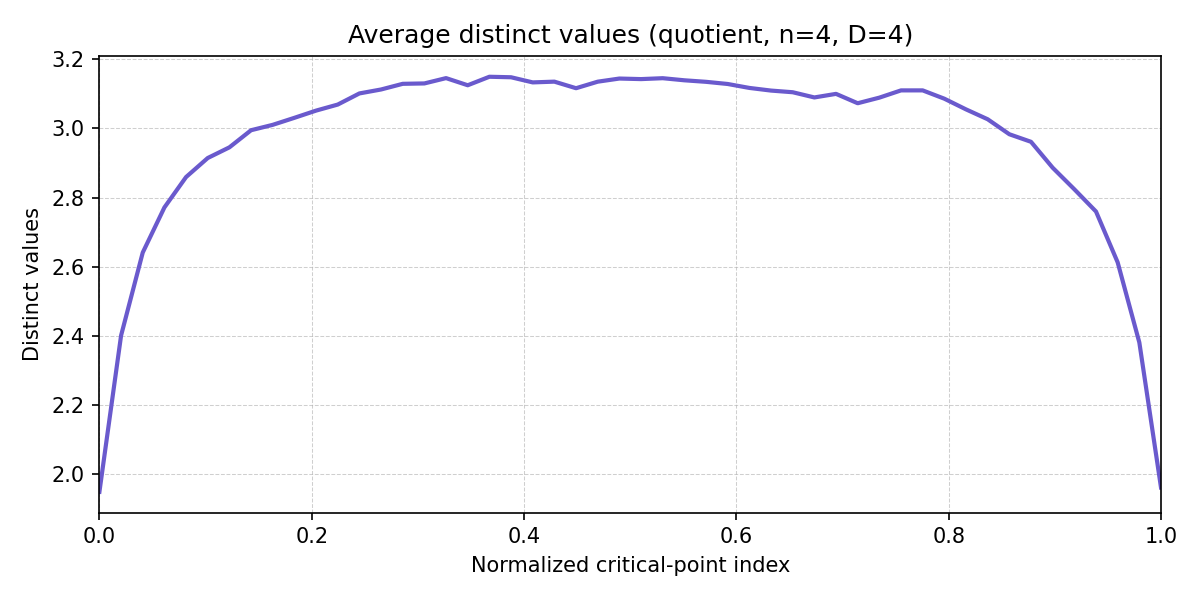}
\caption{Quotient, $n=4$, $d=4$ (unbounded).}
\end{subfigure}
\hfill
\begin{subfigure}[t]{0.48\linewidth}
\centering
\includegraphics[width=\linewidth]{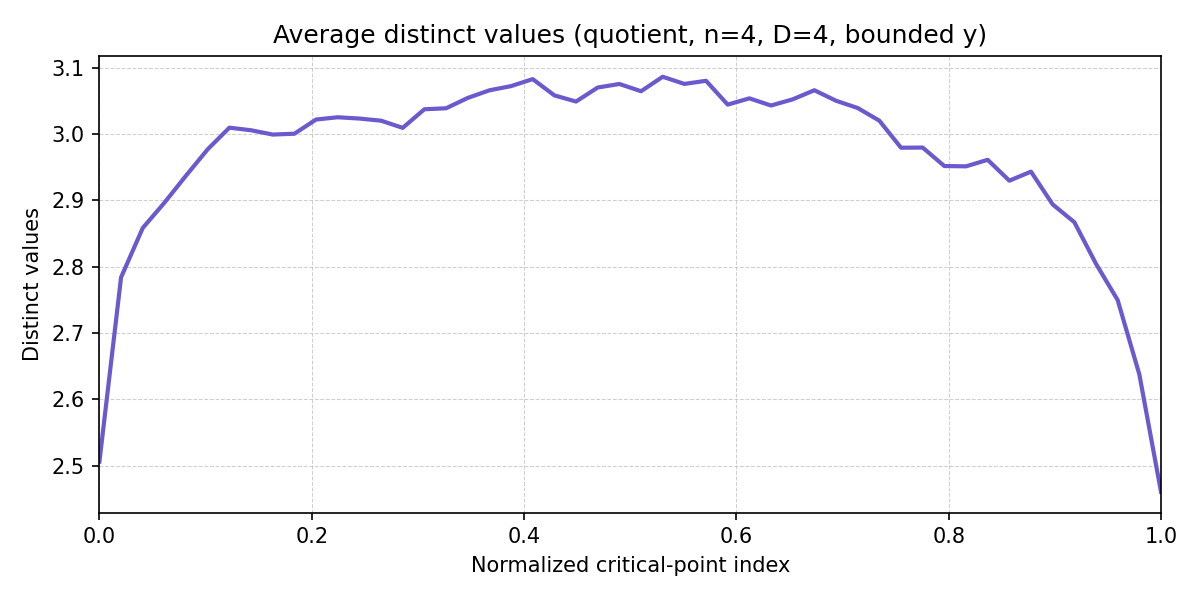}
\caption{Quotient, $n=4$, $d=4$ (bounded on $Y$ via $+\|y\|^{6}$).}
\end{subfigure}
\caption{Boundary preference under the quotient construction $f=P\circ \pi$ across unbounded and $y$-space bounded runs. 
With the notable exception of \ref{fig:exception_quotient_n2}, the $y$-space bounded examples consistently show lower distinct values near the low-energy end (left tail). 
This persistence indicates that the symmetry preference is not generally an artifact of $\tilde f$ being unbounded on $Y(\R)$.}
\label{fig:boundary_appendix_quotient}
\end{figure}

\begin{figure}[p]
\centering
\begin{subfigure}[t]{0.48\linewidth}
\centering
\includegraphics[width=\linewidth]{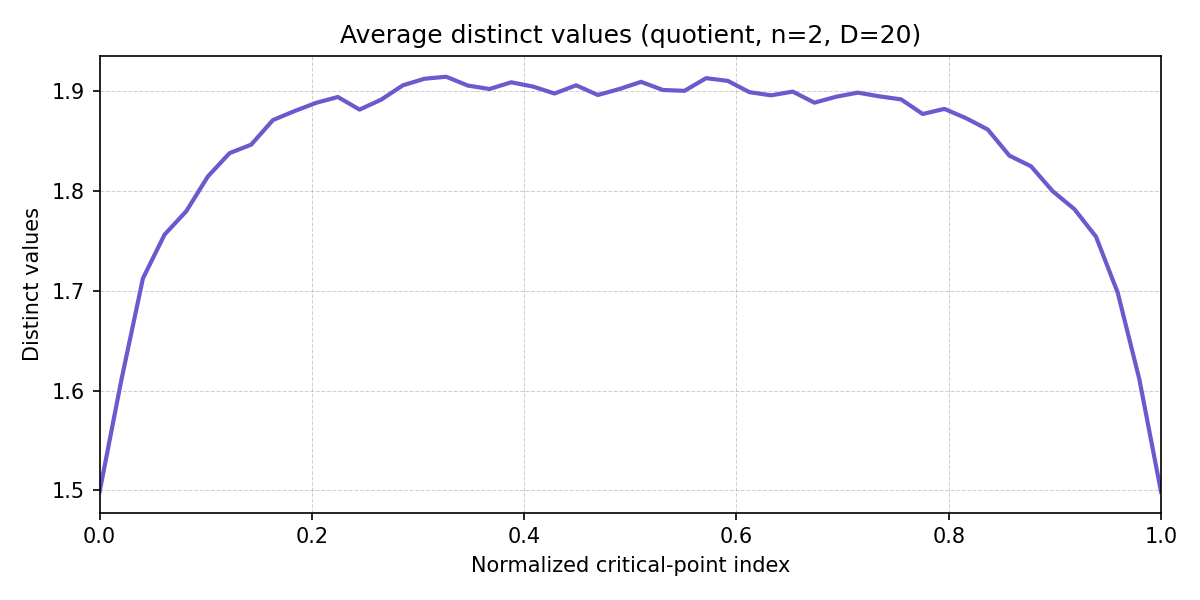}
\caption{Quotient, $n=2$, $d=20$ (unbounded).}
\end{subfigure}
\hfill
\begin{subfigure}[t]{0.48\linewidth}
\centering
\includegraphics[width=\linewidth]{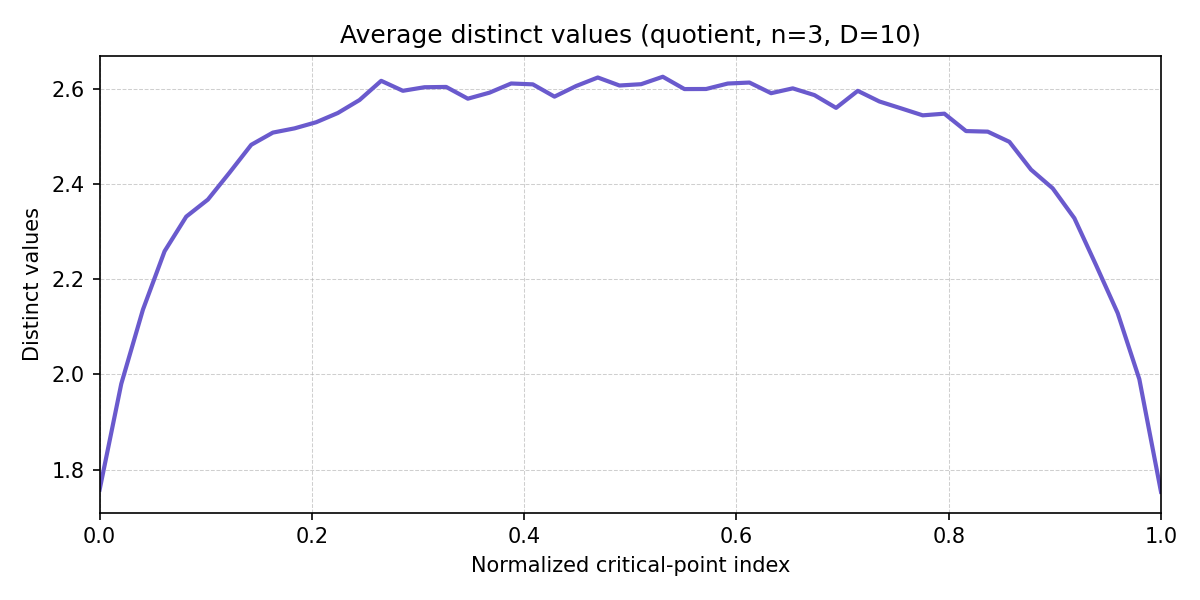}
\caption{Quotient, $n=3$, $d=10$ (unbounded).}
\end{subfigure}
\caption{Additional quotient experiments at higher degree (unbounded cases).}
\label{fig:boundary_appendix_quotient_more}
\end{figure}

\begin{figure}[p]
\centering
\begin{subfigure}[t]{0.48\linewidth}
\centering
\includegraphics[width=\linewidth]{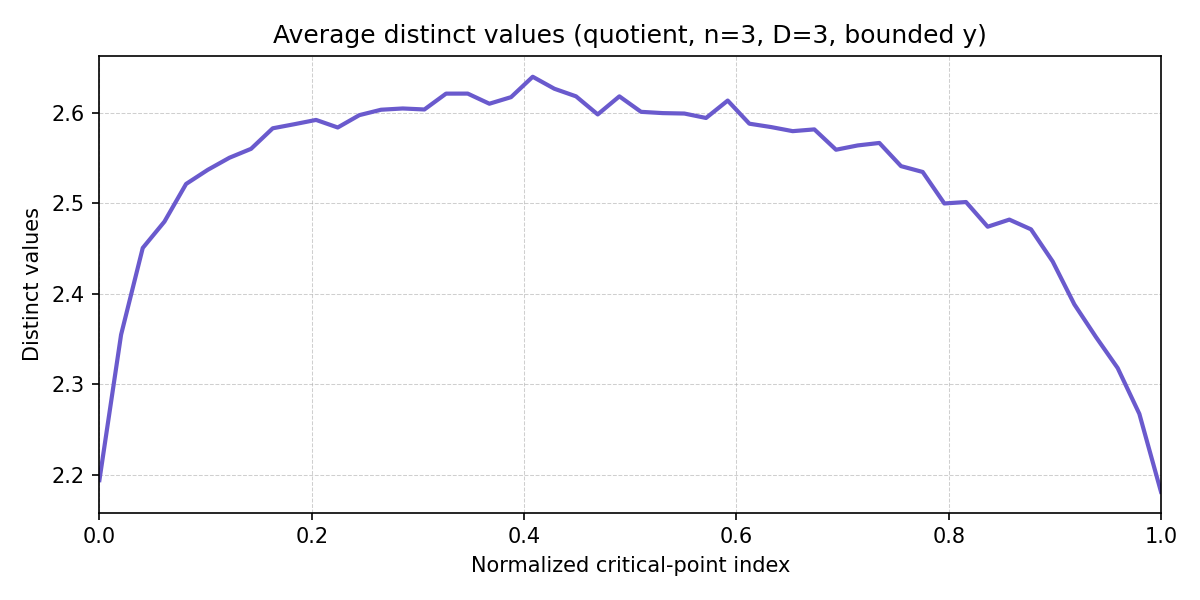}
\caption{Quotient, $n=3$, $d=3$ (bounded on $Y$ via $+\|y\|^{4}$).}
\end{subfigure}
\hfill
\begin{subfigure}[t]{0.48\linewidth}
\centering
\includegraphics[width=\linewidth]{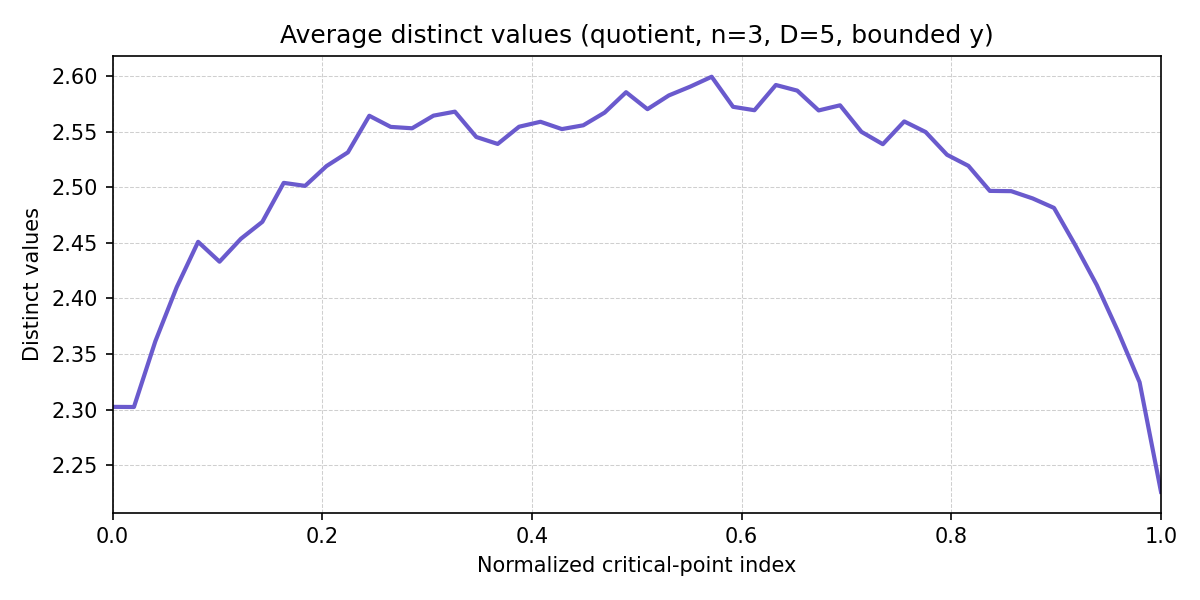}
\caption{Quotient, $n=3$, $d=5$ (bounded on $Y$ via $+\|y\|^{6}$).}
\end{subfigure}

\vspace{0.5em}
\hfill
\begin{subfigure}[t]{0.48\linewidth}
\centering
\includegraphics[width=\linewidth]{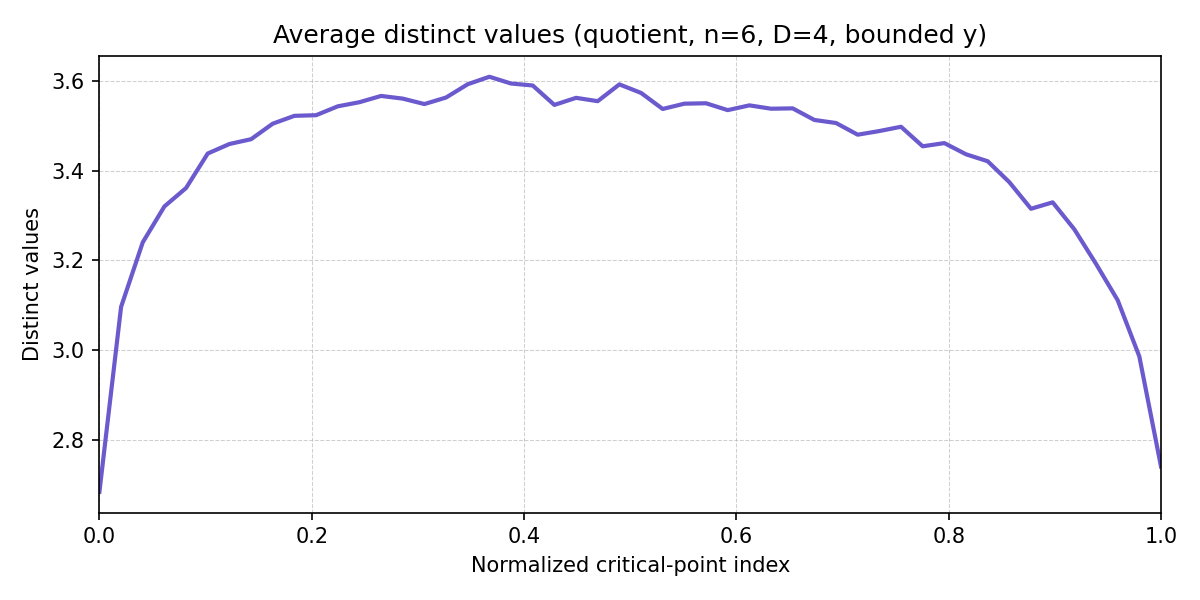}
\caption{Quotient, $n=6$, $d=4$ (bounded on $Y$ via $+\|y\|^{6}$).}
\end{subfigure}
\hfill
\caption{Further $y$-space bounded quotient runs across additional $(n,d)$.}
\label{fig:boundary_appendix_quotient_yspace}
\end{figure}
\begin{figure}[p]
\centering
\begin{subfigure}[t]{0.48\linewidth}
\centering
\includegraphics[width=\linewidth]{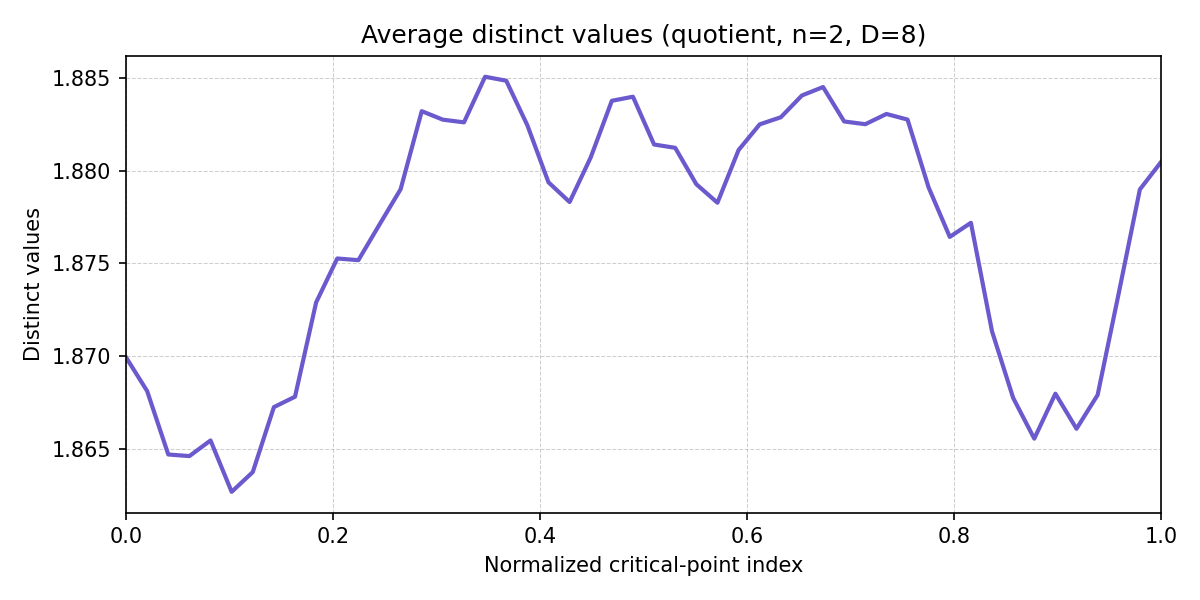}
\caption{$n=2$, $d=8$ (circle in $Y(\R)$).}
\end{subfigure}
\hfill
\begin{subfigure}[t]{0.48\linewidth}
\centering
\includegraphics[width=\linewidth]{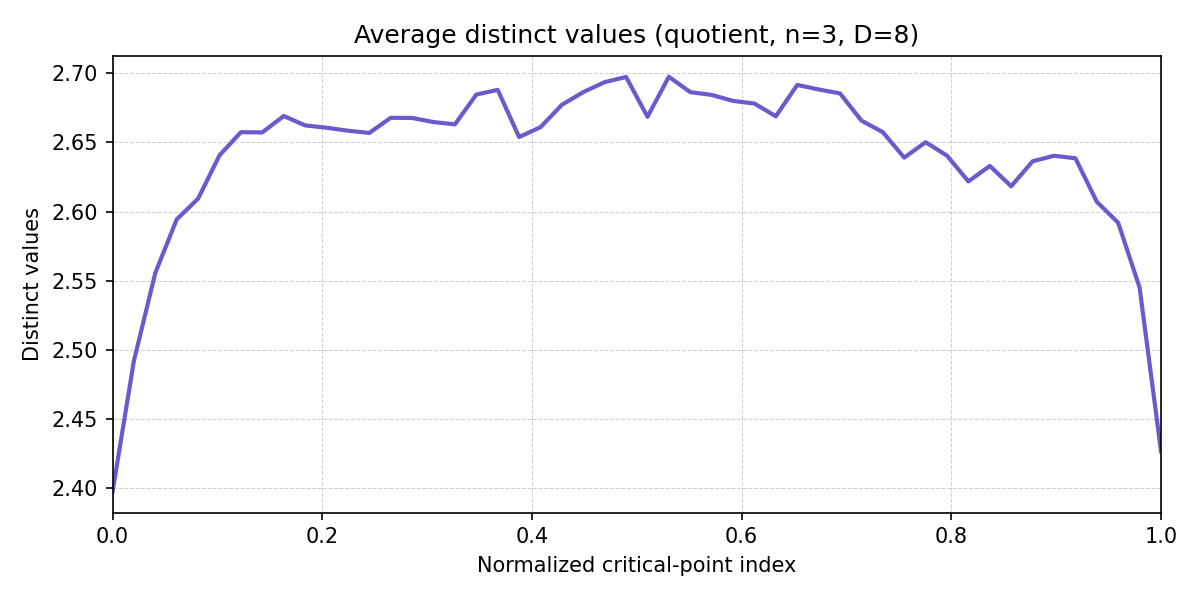}
\caption{$n=3$, $d=8$ (sphere in $Y(\R)$).}
\end{subfigure}
\caption{Quotient construction $f=P\circ\pi$ under ES-sphere normalization $\|e(x)\|^2=\sum_{k=1}^n e_k(x)^2=1$ in $Y(\R)$. For $n=2$ the distinct-values profile is nearly flat, while for $n=3$ the low-energy end still exhibits fewer distinct values (higher symmetry).}
\label{fig:boundary_appendix_quotient_sphere}
\end{figure}
\FloatBarrier
\section{Disentangling the Driver of the Active Constraint}
\label{app:mechanism_disentanglement}

We have proposed the "Active Constraint" mechanism to explain the accumulation of minima at the boundary $\partial L$. This mechanism implies the existence of a "global gradient" that drives optimization trajectories outward.
The fundamental question remains: \textbf{What is the origin of this gradient?}

One hypothesis, translating the statistical arguments of Wales \cite{wales1997global} into our geometric framework, is that this gradient is entropic in nature, driven by the \textit{variance profile} of the ensemble.
Alternatively, the gradient may be a generic topological consequence of optimizing a multimodal function over a restricted domain---much like the ``Carpet on the Himalayas'' analogy---persisting even in the absence of variance fluctuations.

\subsection*{1. The Variance Gradient Hypothesis}
Wales originally argued that symmetric configurations are statistically favored because they exhibit higher energy variance. 
To investigate this, we analyze invariant landscapes generated from the \emph{non-homogeneous KSS ensemble} (Definition~\ref{def:nhkss}), utilizing two distinct construction methods to impose symmetry:

\begin{enumerate}
    \item \textbf{The Quotient-KSS Model (Definition~\ref{def:quotient_kss_model}).} The random potential is sampled on the quotient variables $y=\pi(x)$ and pulled back to configuration space by composition with $\pi$.
    \item \textbf{The Reynolds Model (Definition~\ref{def:reynolds_symmetrization}).} A random polynomial is sampled on configuration space and averaged over the $S_n$-action.
\end{enumerate}

A key feature of the Quotient-KSS model is that its pointwise variance is a strictly radial function of the norm $\|y\|$ in the quotient space $Y(\mathbb{R})$.
Since symmetric points in $X(\mathbb{R})$ map to the boundary $\partial L$—which corresponds to points with larger norms in $Y(\mathbb{R})$—the boundary acts as a locus of maximal variance.
Therefore, one might hypothesize that the "Active Constraint" is driven solely by this statistical profile: the system "drifts" to the boundary simply because that is the direction of increasing fluctuations.

\subsection*{2. The Rigorous Precedent: The Bos-Ware Limit}
This hypothesis is strongly supported in the one-dimensional case by Bos and Ware \cite{Bos2020}. They analyzed the global minimum of random Kac polynomials $p(t)$ on a bounded interval $[-a, a]$. They proved that despite the landscape becoming increasingly multimodal as the degree $n$ rises, the probability that the global minimum occurs strictly on the boundary (the endpoints $\{-a, a\}$) remains uniformly bounded away from zero:
\[
\Prob(x_{\min}\in\{-a,a\})\ge c > 0.
\]

For the ensemble they analyzed, the variance $\sigma^2(x)$ is strictly convex and maximized at these endpoints. Thus, in the 1D limit, the boundary preference aligns perfectly with the direction of maximal variance growth.

\subsection*{3. The Sphere Control Experiment: Testing the Hypothesis}
To determine if this variance gradient is the \textit{sole} driver, or if the boundary preference persists generically due to the domain's shape, we performed a control experiment by varying the constraint manifold in the configuration space $X(\mathbb{R})$.

\paragraph{Proof of Intrinsic Geometric Attraction ($n=3$).}
First, we demonstrate that the preference for symmetry exists even when the statistical variance is completely neutralized.
We consider an $S_3$-invariant random model on $X(\mathbb{R})=\mathbb{R}^3$. To neutralize the variance, we restrict the optimization to the level set of the \textit{quotient norm}:
\[
\mathcal{M}_{Y\text{-sphere}} = \{ x \in \mathbb{R}^3 \mid \|\pi(x)\|^2 = 1 \}.
\]
Note that while defined via $\pi$, this is a manifold in the configuration space $X(\mathbb{R})$.
Since in the quotient-KSS model $\operatorname{Var}(\tilde{f}(y))$ depends only on the radial distance $\|y\|$ in the quotient, restricting $x$ to this manifold ensures that:
\[
\operatorname{Var}(f(x)) = \operatorname{Var}(\tilde{f}(\pi(x))) = \text{const}, \quad \forall x \in \mathcal{M}_{Y\text{-sphere}}.
\]
On this manifold, symmetric and asymmetric configurations are statistically indistinguishable in terms of variance.

\textbf{Result:} As shown in Figure \ref{fig:sphere_n3}, despite the constant variance, the global minima exhibit a persistent preference for the $S_3$ symmetric stratum.
\textbf{Conclusion:} For $n=3$, the "Active Constraint" is not driven by variance. The intrinsic, intricate geometry of the boundary is sufficient to attract the global minimum.

\begin{figure}[H]
\centering
\includegraphics[width=0.6\linewidth]{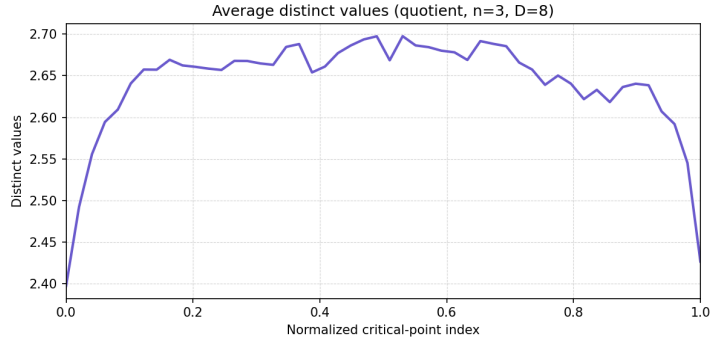}
\caption{\textbf{Geometry acts alone ($n=3$).} Quotient-KSS model (non-homogeneous, $D=8$), restricted to $\{x \in \mathbb{R}^3 \mid \|\pi(x)\|=1\}$. Here, the variance is constant across the domain. The persistence of the symmetry bias proves that the singular geometry of the $n=3$ boundary attracts minima even without a variance gradient.}
\label{fig:sphere_n3}
\end{figure}

\paragraph{The Role of Variance as an Amplifier ($n=2$).}
While geometry alone suffices for $n=3$, the $n=2$ case reveals that variance plays a critical role in \textit{amplifying} this effect for smoother boundaries. We compare optimization on two distinct manifolds in $X(\mathbb{R}) = \mathbb{R}^2$:

\begin{enumerate}
    \item \textbf{The Quotient-Sphere Manifold:}
    \[ \mathcal{M}_{Y\text{-sphere}} = \{ x \in \mathbb{R}^2 \mid \|\pi(x)\|^2 = 1 \}. \]
    As established above, the variance is constant on this set.
    \item \textbf{The Configuration-Sphere Manifold:}
    \[ \mathcal{M}_{X\text{-sphere}} = \{ x \in \mathbb{R}^2 \mid \|x\|^2 = 1 \}. \]
    Moving along this standard circle in configuration space induces radial movement in the quotient space.
    Specifically, the quotient norm $\|\pi(x)\|$ varies along $\mathcal{M}_{X\text{-sphere}}$ and attains its \textbf{maximum} exactly at the symmetric configurations (where $x_1=x_2$).
    Thus, restricting to the standard sphere $\mathcal{M}_{X\text{-sphere}}$ reintroduces a variance gradient that peaks at the boundary.
\end{enumerate}

This geometric distinction is visualized in Figure \ref{fig:n2_sphere_geometry}.

\begin{figure}[H]
\centering
\includegraphics[width=0.7\linewidth]{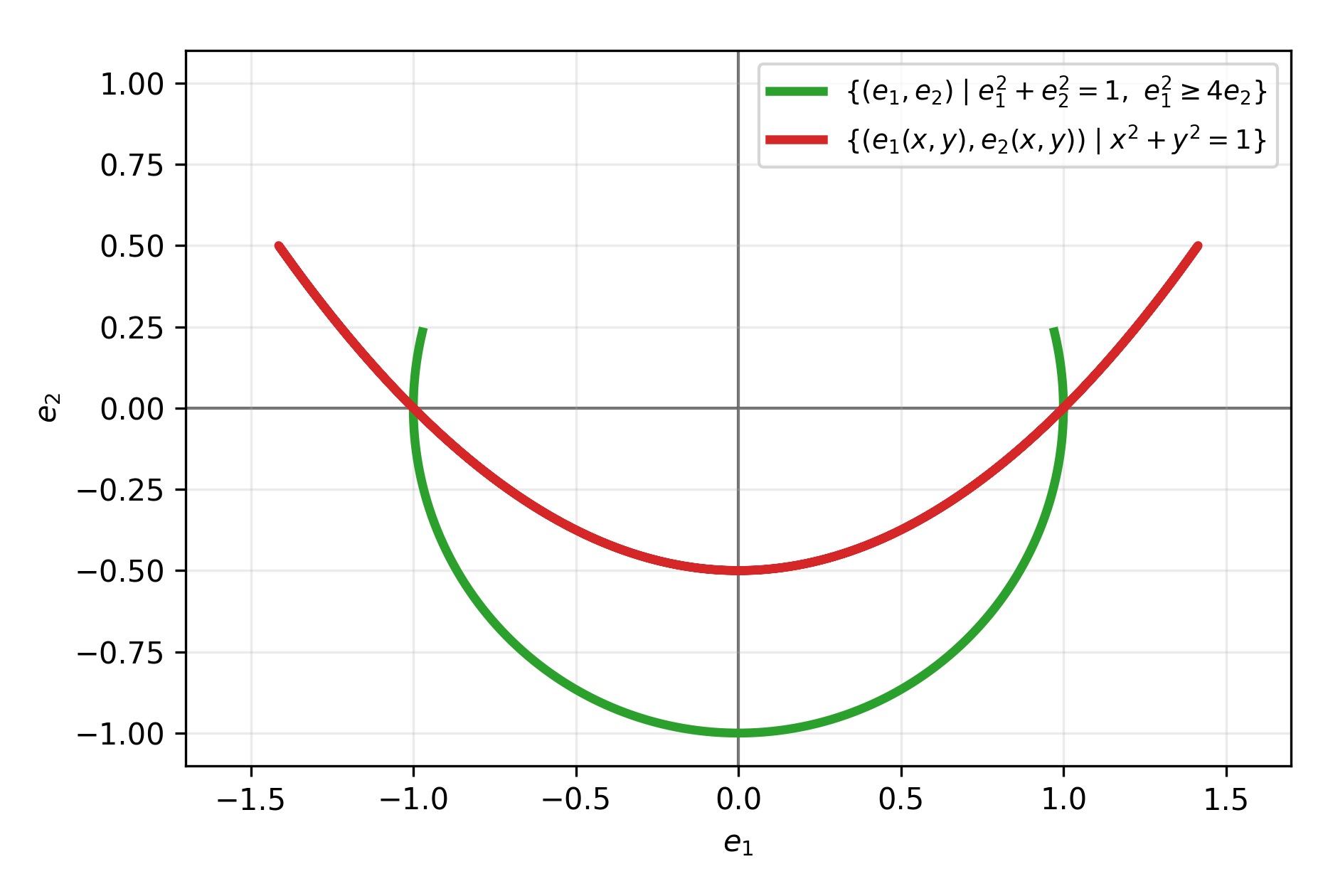}
\caption{\textbf{Two constraints viewed in $Y(\mathbb{R})$.}
\textbf{Setup:} $y=\pi(x)$ and the random potential is the quotient-KSS model, whose variance depends only on $\|y\|$.
\textbf{Green:} The image of $\mathcal{M}_{Y\text{-sphere}}$ (where $\|\pi(x)\|=1$). It lies on a circle in $Y$, so variance is constant.
\textbf{Red:} The image of $\mathcal{M}_{X\text{-sphere}}$ (where $\|x\|=1$). This curve extends radially outward, reaching maximal $\|y\|$ (and thus maximal variance) at the symmetric boundary points.
This illustrates how standard physical constraints ($\|x\|=1$) naturally couple symmetry with high variance.}
\label{fig:n2_sphere_geometry}
\end{figure}

\textbf{Results:} The impact of this difference is drastic (Figure \ref{fig:n2_comparison_panels}):
\begin{itemize}
    \item On $\mathcal{M}_{Y\text{-sphere}}$ (Green, constant variance), the boundary preference vanishes (Fig \ref{fig:sphere_n2_flat}). The smooth fold of the $n=2$ boundary is too weak to trap minima on its own.
    \item On $\mathcal{M}_{X\text{-sphere}}$ (Red, variable variance), the boundary preference returns (Fig \ref{fig:sphere_n2_bias}). The system "drifts" to the symmetric boundary because that is where the variance is maximized.
\end{itemize}

\begin{figure}[H]
\centering
\begin{subfigure}[t]{0.32\linewidth}
\centering
\includegraphics[width=\linewidth]{distinct_values_average_continuous_quotient_n_2_D_8_esSphere.png}
\caption{Quotient-KSS (non-homogeneous, $D=8$)\newline ($\|\pi(x)\|=1$) $\to$ \textbf{No Bias}}
\label{fig:sphere_n2_flat}
\end{subfigure}
\hfill
\begin{subfigure}[t]{0.32\linewidth}
\centering
\includegraphics[width=\linewidth]{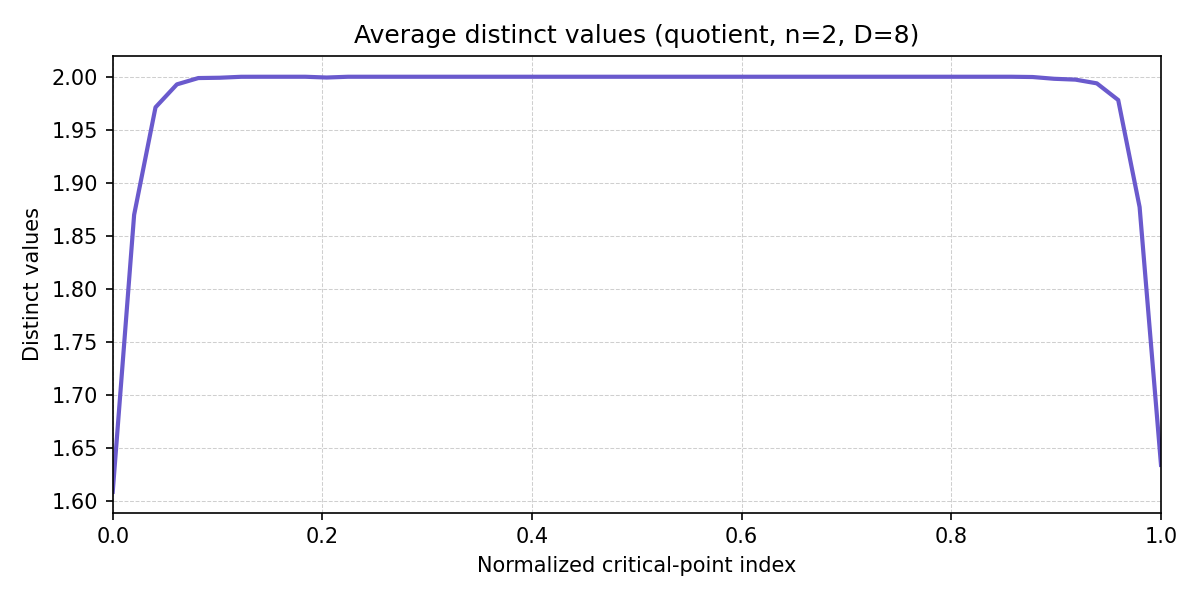}
\caption{Quotient-KSS (homogeneous, $D=8$)\newline ($\|x\|=1$) $\to$ \textbf{Bias Returns}}
\label{fig:sphere_n2_bias}
\end{subfigure}
\hfill
\begin{subfigure}[t]{0.32\linewidth}
\centering
\includegraphics[width=\linewidth]{distinct_values_average_continuous_reynolds_n_2_D_10_lb12Bounded_c1.png}
\caption{Reynolds-symmetrized KSS ($D=10$) with $+\|x\|^{12}$\newline (Unbounded) $\to$ \textbf{Bias}}
\label{fig:reynolds_n2}
\end{subfigure}
\caption{\textbf{Variance as an Amplifier ($n=2$).} (a) Optimizing on $\mathcal{M}_{Y\text{-sphere}}$ (constant variance) shows no boundary accumulation. (b) Optimizing on $\mathcal{M}_{X\text{-sphere}}$ reintroduces the variance gradient, restoring the bias. (c) The full unbounded model confirms the robust effect of radial drift.}
\label{fig:n2_comparison_panels}
\end{figure}

\subsection*{Conclusion: Variance as an Amplifier}
We conclude that the "Active Constraint" operates through a hierarchy of drivers:
\begin{enumerate}
    \item \textbf{Intrinsic Driver:} For complex domains (as in $n=3$), the boundedness and shape of the real image alone are sufficient to push the global minimum to the boundary. The multimodal landscape naturally attains its minimum at the boundary of the domain.
    \item \textbf{Statistical Amplifier:} For smoother domains (as in $n=2$), the intrinsic effect is weak. In these cases, the \textbf{variance gradient} acts as a critical amplifier. By increasing fluctuations at the boundary, it reinforces the drift and ensures the global minimum settles on the symmetric locus.
\end{enumerate}
Thus, we assume that the "Wales mechanism" (high variance at symmetry) acts as a powerful catalyst that amplifies the generic geometric tendency.

\end{document}